\newcommand{\remove}[1]{}
\newtheorem{thm}{Theorem}[section]
\newtheorem{lem}[thm]{Lemma}
\newtheorem{cor}[thm]{Corollary}
\newtheorem{obs}[thm]{Observation}
\newtheorem{que}[thm]{Question}
\def\Fan{{\mathrm{Fan}}}
\def\odd{{\mathrm{odd}}}
\def\deg{{\mathrm{deg}}}
\def\c{c_\infty}
\def\sm{\setminus}
\newcommand{\e}{\epsilon}
\newcommand{\eps}{\varepsilon}
\newcommand{\mc}[1]{\mathcal{#1}}
\newcommand{\bb}[1]{\mathbb{#1}}
\newcommand{\brm}[1]{\operatorname{#1}}
\newcommand{\abs}[1]{| #1 |}
\title{Asymptotic Density of Graphs Excluding Disconnected Minors}
\author{Rohan Kapadia \thanks{Department of Computer Science and Software Engineering, Concordia University, Montreal, Quebec, Canada. Email: {\tt rohan.f.kapadia@gmail.com}} \and Sergey Norin\thanks{Department of Mathematics and Statistics, McGill University. Email: {\tt snorin@math.mcgill.ca}. Supported by an NSERC grant.} \and 
Yingjie Qian \thanks{School of Mathematics, Georgia Institute of Technology. Email: {\tt yingjie.qian@gatech.edu}}
}
\begin{document}
\maketitle
\baselineskip 20pt

\begin{abstract} 
For a graph $H$, let $$c_{\infty}(H)= \lim_{n \to \infty}\max\frac{|E(G)|}{n},$$
where the maximum is taken over all graphs $G$ on $n$ vertices not containing $H$ as a minor. Thus $c_{\infty}(H)$ is the asymptotic maximum density of graphs not containing $H$ as a minor. Employing a structural lemma due to Eppstein, we prove new upper bounds on $c_{\infty}(H)$ for disconnected graphs $H$. In particular, we determine  $c_{\infty}(H)$ whenever $H$ is union of cycles. Finally, we investigate the behaviour of $c_\infty(sK_r)$ for fixed $r$, where $sK_r$ denotes the union of $s$ disjoint copies of the complete graph on $r$ vertices.
Improving on a result of Thomason, we show that
$$c_\infty(sK_r)=s(r-1)-1 \mathrm{\; for \;} s ={\Omega}\left(\frac{\log{r}}{\log\log{r}}\right),$$ and
$$c_\infty(sK_r)>s(r-1)-1 \mathrm{\; for \;} s ={o}\left(\frac{\log{r}}{\log\log{r}}\right).$$
\end{abstract}

\section{Introduction}

A graph $H$ is \emph{a minor} of a graph $G$  if a graph isomorphic to $H$ can be obtained from a subgraph of $G$ by contracting edges. A well-studied  extremal question in graph minor theory is determining the maximum density of graphs $G$ not containing $H$ as a minor. We denote by $v(G)$ and $e(G)$ the number of  vertices and edges of a graph $G$, respectively, and by $d(G)=e(G)/v(G)$ the \emph{density} of a non-null graph $G$. 
Following Myers and Thomason~\cite{MyeTho05} for a  graph $H$ with $v(H) \geq 2$ we define  
\emph{the extremal function  $c(H)$ of $H$} as the supremum of $d(G)$ taken over all non-null graphs $G$ not containing $H$ as a minor. The asymptotic behaviour of $c(K_r)$, where $K_r$ denotes the complete graph on $r$ vertices, was studied in~\cite{Kostochka82,Kostochka84,Thomason84}, and  was determined precisely by Thomason~\cite{Thomason01}, who has shown that
\begin{equation}\label{e:Thomason}
c(K_r)=(\lambda+o_r(1))r\sqrt{\log{r}},
\end{equation}
where
$$\lambda = \max_{\alpha >0} \frac{1-e^{-\alpha}}{2\sqrt{\alpha}}=0.319...,$$
is an explicit constant, which we will refer to as  \emph{Thomason's constant}. 

In~\cite{Tho08} Thomason defined an asymptotic variant of the extremal function as  $$c_{\infty}(H)= \lim_{n \to \infty}\max_{v(G)=n} d(G)$$
where the maximum is taken over all graphs $G$ on $n$ vertices not containing $H$ as a minor. We refer to $c_{\infty}(H)$ as the \emph{asymptotic extremal function of $H$.} 
Clearly, $c_{\infty}(H) \leq c(H)$. When $H$ is connected then, as observed in~\cite{Tho08},   $c(H)=c_{\infty}(H)$, because in this case one can replace an $H$-minor free graph $G$ by a disjoint union of many copies of $G$ to obtain arbitrarily large $H$-minor free graphs with the same density as $G$. For disconnected graphs $H$ the parameters $c_{\infty}(H)$ and $c(H)$ frequently differ. 

Let $lH$ denote the union of $l$ disjoint copies of a graph $H$. The following theorem is the main result of~\cite{Tho08}.
\begin{thm}[Thomason~\cite{Tho08}]\label{t:Thomason}
	 $\:$
	 \begin{description}
		\item[a)] $c_{\infty}(lK_r)= (1+o_r(1))c(K_r)$  for fixed $l$,
		\item[b)] $c_{\infty}(lK_r)= l(r-1)-1$  for $l \geq 20c(K_r)$
	\end{description}
\end{thm}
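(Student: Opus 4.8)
The plan is to obtain the two lower bounds from explicit join constructions, and the two upper bounds from a density-reduction step followed by a structural analysis of the reduced graph.

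\textbf{Lower bounds.} For part (b), let $G_n$ be the complete join $K_{l(r-1)-1}\ast\overline{K_{n-l(r-1)+1}}$ of a clique on $l(r-1)-1$ vertices with an independent set. Then $e(G_n)/n\to l(r-1)-1$, and $G_n$ has no $lK_r$-minor: in any $K_r$-minor model at most one branch set can avoid the clique part (a branch set meeting only the independent part is a single vertex, all of whose neighbours lie in the clique, or else is disconnected), so a single $K_r$-minor uses at least $r-1$ vertices of the clique, and $l$ vertex-disjoint ones would need $l(r-1)>l(r-1)-1$ of them. Hence $c_\infty(lK_r)\ge l(r-1)-1$. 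For part (a), let $G_0$ be a $K_r$-minor-free graph with $e(G_0)/v(G_0)=c(K_r)-o(1)$ and $v(G_0)$ arbitrarily large, which exist since $c(K_r)=c_\infty(K_r)$; then $G_0$ is already $lK_r$-minor-free, so $c_\infty(lK_r)\ge c(K_r)=(1+o_r(1))c(K_r)$. (The same pigeonhole argument on apex vertices shows $K_{l-1}\ast G_0$ is $lK_r$-minor-free as well, giving the slightly stronger $c_\infty(lK_r)\ge(l-1)+c(K_r)$, but the weaker bound already matches the asymptotic claim.)

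\textbf{Upper bounds: the reduction.} Fix $\varepsilon>0$ and suppose $G$ is $lK_r$-minor-free with $e(G)\ge(D+\varepsilon)v(G)$, where $D=l(r-1)-1$ in part (b) and $D$ is a fixed constant slightly larger than $c(K_r)$ in part (a). Deleting a vertex of degree at most $D$ never decreases the density while the density is at least $D$, so iterating we reach a subgraph $G'$ with $\delta(G')\ge D+1$ and $e(G')\ge(D+\varepsilon)v(G')$; comparing the numbers of deleted vertices and edges then yields $v(G)\le\binom{v(G')}{2}/\varepsilon$. So it suffices to bound $v(G')$ in terms of $l$ and $r$. In part (b) this is where $l\ge 20c(K_r)$ is used: a $K_r$-minor-free graph has density below $c(K_r)$ and hence a vertex of degree below $2c(K_r)$, while $\delta(G')\ge l(r-1)>2c(K_r)$, so every component of $G'$ contains a $K_r$-minor and $G'$ has at most $l-1$ components. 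Working in a component $C$ of largest density and choosing a maximum family $M_1,\dots,M_k$ ($k\le l-1$) of vertex-disjoint $K_r$-minor models with $\sum_i|V(M_i)|$ minimum, each $C[V(M_i)]$ is $K_r$-minor-critical and $R:=V(C)\setminus\bigcup_iV(M_i)$ induces a $K_r$-minor-free graph; one then bounds $v(C)$ by estimating the edges inside the $M_i$, between them, from them to $R$, and inside $R$. In part (a) one instead combines $c_\infty(lK_r)\le c(lK_r)$ with the Myers--Thomason determination of $c(H)$, which gives $c(lK_r)=(1+o_r(1))c(K_r)$ for fixed $l$, or runs the same reduction allowing $R$ density up to $c(K_r)$.

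\textbf{The main obstacle.} The delicate point --- and what forces the hypothesis $l\ge 20c(K_r)$ --- is recovering the \emph{exact} constant $l(r-1)-1$ rather than the weaker $l(r-1)-1+c(K_r)$ that falls out of the crude estimate $e(C[R])<c(K_r)|R|$ on the $K_r$-minor-free remainder. In the extremal graph $R$ is far sparser than a general $K_r$-minor-free graph (density about $r-2$), so the key step is to show that any appreciably denser portion of $R$ can be combined with the bounded set $\bigcup_iV(M_i)$, using its vertices as apices over a $K_{r-1}$-minor found in that portion, to produce a further $K_r$-minor disjoint from $M_1,\dots,M_k$, contradicting maximality of $k$. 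Equivalently: a large $lK_r$-minor-free graph of density above $l(r-1)-1$ must decompose along separators of bounded order into pieces that are either of bounded size or $K_r$-minor-free and sparse, and reassembling the density bound across this decomposition must lose nothing. I expect this absorption/decomposition argument, and the clique-minor packing inequalities underlying the density estimate for $R$ (which is where the constant $20$ is spent), to be the technical heart of the proof, with the rest being bookkeeping.
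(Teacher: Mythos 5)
Your lower-bound constructions are fine (they are exactly the $\bar{K}_{l(r-1)-1,n}$ bound (\ref{e:tau}) and the trivial monotonicity bound), but both upper bounds have genuine gaps, and the first is not just a gap but an error. For part a) you propose $c_\infty(lK_r)\le c(lK_r)$ together with the claim that Myers--Thomason gives $c(lK_r)=(1+o_r(1))c(K_r)$ for fixed $l$. That claim is false: $c(lK_r)$ is of order $l\cdot c(K_r)$, not $(1+o(1))c(K_r)$. Indeed, a quasirandom graph on roughly $l\lambda r\sqrt{\log r}$ vertices with edge probability optimizing Thomason's constant has density about $l\,c(K_r)$, yet whp it has no $l$ pairwise disjoint $K_r$ minors, because (by the same Bollob\'as--Catlin--Erd\H{o}s-type union bound that appears in Lemma~\ref{l:construction}) every vertex subset hosting a $K_r$ minor must have size about $\lambda^{-1}\cdot$(order of the graph)$/l$. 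Such bounded-order dense examples are precisely what distinguishes $c$ from $c_\infty$ for disconnected $H$, so any route through $c(lK_r)$ cannot give $(1+o_r(1))c(K_r)$; the $o_r(1)$ error in a) has to be won by an argument that exploits the graph being \emph{large}. Your fallback (``run the same reduction allowing $R$ density up to $c(K_r)$'') is not carried out and faces the same issue: after your degeneracy reduction the graph $G'$ can still be arbitrarily large, and nothing in the sketch bounds $v(G')$.

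For part b) you correctly identify where the work lies --- showing that the $K_r$-minor-free remainder $R$ is much sparser than a generic $K_r$-minor-free graph, or absorbing any dense part of $R$ into a further disjoint $K_r$ minor --- but this step is explicitly deferred (``I expect this absorption/decomposition argument \dots to be the technical heart''), so the proof is not complete; recovering the exact value $l(r-1)-1$ is the entire content of the theorem. For comparison, this paper does not prove Thomason's theorem directly: it quotes it, then re-derives a) in stronger form via Theorem~\ref{thm:infty+} and Corollary~\ref{c:lKrLoose}, and strengthens b) in Theorem~\ref{thm:main}~b). The decisive simplification there is Eppstein's Lemma~\ref{lem:formfan}: for the asymptotic parameter one may restrict attention to regular blades $(G,S)$, i.e.\ to fans, where Lemma~\ref{l:subblade} converts the packing of $l$ disjoint $K_r$ minors into partitioning the apex set $S$, and the density bookkeeping (Lemmas~\ref{l:bladedensity} and~\ref{l:bladetau}) becomes a few lines. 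Your outline works with an arbitrary large $lK_r$-minor-free graph and therefore has to rediscover, by hand, the structural control that the blade reduction provides for free; without that (or Thomason's original absorption argument) the upper bounds remain unproved.
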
	

Powerful structural tools of graph minor theory become available when one considers large  graphs in minor-closed graph classes, and, in particular, when one investigates $c_{\infty}(H)$ rather than  $c(H)$. The main goal of this paper is to use one such tool, a lemma proved by Eppstein~\cite{Eppstein10}, to derive several new bounds on the asymptotic density of graphs excluding disconnected minors. In particular, we improve bounds in Theorem~\ref{t:Thomason}.  

Let us first present a natural lower bound on $c_{\infty}(H)$. 
Let $\tau(H)$ denote \emph{the vertex cover number} of the graph $H$, that is the minimum size of the set $X \subseteq V(H)$ such that $H - X$ is edgeless. 
Let $\bar{K}_{s,t}$ denote the graph obtained from the disjoint union of a complete graph $K_s$ and an edgeless graph $E_t$ on $t$ vertices by making every vertex of $K_s$ adjacent to every vertex of $E_t$. 
Then $\tau(\bar{K}_{s,t})=s$ for $t \geq 1$, and $\lim_{t \to \infty} d(\bar{K}_{s,t}) = s$. As the vertex cover of any minor of a graph $G$ does not exceed  $\tau(G)$, it follows that $H$ is not a minor of the graph $\bar{K}_{s,t}$ for any $s < \tau(H)$ and any $t$. Thus 
\begin{equation}\label{e:tau}
 c_{\infty}(H) \geq \tau(H)-1
\end{equation} 
for every graph $H$. We say that a graph $H$ is \emph{well-behaved} if (\ref{e:tau}) holds with equality. Dirac~\cite{Dirac64}, Mader~\cite{Mader68}, J{\o}rgensen~\cite{Jorgensen94}, and Song and Thomas~\cite{SonTho06} proved that $c(K_r)=r-2$ for $r \leq 5$, $r \leq 7$, $r=8$ and $r=9$, respectively. Thus $K_r$ is well-behaved for $r \leq 9$, however (\ref{e:Thomason}) implies that $K_r$ is far from being well-behaved for large $r$. On the other hand,  Theorem~\ref{t:Thomason} b) implies that $lK_r$ is well-behaved for fixed $r$ and large $l$. The results of this paper imply that  many classes of disconnected graphs are well-behaved, or are close to being well-behaved.

Our first result provides a general upper bound on $c_{\infty}(H)$ for a disconnected graph $H$ in terms of the asymptotic extremal function and the vertex cover number of its components. 

\begin{thm}
	\label{thm:infty+}
	Let $H$ be the disjoint union of non-null graphs $H_1$ and $H_2$, then 
	\begin{equation}\label{e:tau2}
	c_{\infty}(H) \leq \max\{\c(H_2), c_{\infty}(H_1) + \tau(H_2)\}.
	\end{equation}
	In particular,
	\begin{equation}\label{e:sum}
		\c(H) \leq \c(H_1)+\c(H_2)+1.
	\end{equation}
\end{thm}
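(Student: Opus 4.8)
The plan is to apply Eppstein's lemma to the minor-closed family $\mathcal F=\{G:H\not\preceq G\}$, for which $c_\infty(\mathcal F)=\c(H)$. I would use the lemma in the following form (adjusting its statement if needed): there exist a graph $R$ and a set $S\subseteq V(R)$, with $T:=V(R)\setminus S\neq\emptyset$, such that for every $t\ge 1$ the graph $R^{\oplus t}$ obtained from $t$ disjoint copies of $R$ by identifying, for each $s\in S$, the $t$ copies of $s$, lies in $\mathcal F$, and $d(R^{\oplus t})\to\c(H)$ as $t\to\infty$. Calling the $t$ copies of $R-S$ inside $R^{\oplus t}$ its \emph{blades}, a direct count gives $d(R^{\oplus t})\to\frac{e(R)-e(R[S])}{|T|}$, so $\c(H)=\frac{e(R)-e(R[S])}{|T|}$, and the task is to bound this quantity.

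First suppose $H_2\not\preceq R^{\oplus t}$ for every $t$. Then each $R^{\oplus t}$ is $H_2$-minor-free, so $d(R^{\oplus t})$ is at most the maximum density of an $H_2$-minor-free graph on $v(R^{\oplus t})$ vertices, and letting $t\to\infty$ gives $\c(H)\le \c(H_2)$. Otherwise $H_2\preceq R^{\oplus t}$ for some, hence all large, $t$; fix such a $t$ (large enough for the construction in the last paragraph) and choose an $H_2$-model $W$ in $R^{\oplus t}$ minimizing $|W\cap S|$; put $S':=W\cap S$. I would then prove two things. (A) For every $k\ge 1$ the graph $(R-S')^{\oplus k}$ is $H_1$-minor-free: in $R^{\oplus(t+k)}\in\mathcal F$, the $k$ blades not meeting $W$ together with the hub $S\setminus S'$ induce a copy of $(R-S')^{\oplus k}$ disjoint from $W$, and since $W$ realizes an $H_2$-minor while $R^{\oplus(t+k)}$ is $H$-minor-free, that copy contains no $H_1$-minor. (B) $e_R(S',T)\le\tau(H_2)\,|T|$. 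Granting these, (A) shows $\c(H_1)\ge\lim_k d\big((R-S')^{\oplus k}\big)=\frac{e(R)-e(R[S])-e_R(S',T)}{|T|}=\c(H)-\frac{e_R(S',T)}{|T|}$, and with (B) this gives $\c(H)\le \c(H_1)+\tau(H_2)$. So in all cases $\c(H)\le\max\{\c(H_2),\,\c(H_1)+\tau(H_2)\}$, proving \eqref{e:tau2}; and \eqref{e:sum} follows because $\tau(H_2)\le \c(H_2)+1$ by \eqref{e:tau} and $\c(H_1)\ge 0$.

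The step I expect to be the real obstacle is (B): a minor re-routing argument whose whole point is to replace an arbitrarily large hub-intersection by exactly $\tau(H_2)$ hub vertices. Suppose (B) fails; then some $u_0\in T$ has more than $\tau(H_2)$ neighbours in $S'$. Fix a minimum vertex cover $X$ of $H_2$, a bijection $\phi$ from $X$ onto a $\tau(H_2)$-subset $N$ of $N_R(u_0)\cap S'$, and place each vertex of the independent set $V(H_2)\setminus X$, and each edge of $H_2[X]$ whose $\phi$-image is not an edge of $R[S]$, onto its own copy of $u_0$ in a distinct blade; using that every vertex of $N$ is adjacent in $R^{\oplus t}$ to every blade-copy of $u_0$, one checks this yields an $H_2$-model meeting $S$ in only $N$, i.e.\ in $\tau(H_2)<|S'|$ vertices, contradicting the choice of $W$ (here $t$ must exceed the boundedly many blades used, which is why $t$ was taken large). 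Some routine points should also be handled: bringing Eppstein's lemma into exactly the form used above; the degenerate cases where $H_1$ or $H_2$ is edgeless; and, when $H_1$ or $H_2$ is disconnected, replacing statements of the type ``a disjoint union of $H_i$-minor-free graphs is $H_i$-minor-free'' by the observation that the relevant disjoint union already omits some fixed connected component of $H_i$.
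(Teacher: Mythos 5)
Your proposal is correct, but the combinatorial core is genuinely different from the paper's, so it is worth comparing. Both proofs begin identically: Eppstein's lemma (Lemma~\ref{lem:formfan}, via Corollary~\ref{c:fanden}) reduces the bound to a single $H$-minor-free blade, and in both one deletes a subset of the hub $S$ so that the remaining blade is $H_1$-minor-free, charging the density drop to $\tau(H_2)$. The paper does this by ordering $S$, taking the minimal prefix $S_i$ with $H_1$ a minor of $\mc{B}[S_i]$, deducing from Lemma~\ref{l:subblade} that $\mc{B}[S\setminus S_i]$ is $H_2$-minor-free, bounding the number of deleted hub vertices by $\tau(H_2)$ via Lemma~\ref{l:bladetau} (which exploits the completeness of $G[S]$ in a regular blade), and charging a drop of at most $1$ per deleted vertex via Lemma~\ref{l:bladedensity}. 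You instead fix an $H_2$-model with minimum hub intersection $S'$, delete all of $S'$ (which may be far larger than $\tau(H_2)$), and bound the drop by the edge count $e_R(S',T)\le\tau(H_2)\,|T|$, proved by your re-routing step: if some $u_0\in T$ had more than $\tau(H_2)$ neighbours in $S'$, you rebuild an $H_2$-model meeting $S$ in only a vertex cover's worth of hub vertices inside $N_R(u_0)\cap S'$, using fresh blade-copies of $u_0$ for the independent vertices and for missing cover--cover adjacencies. That step is in effect a local version of Lemma~\ref{l:bladetau} which needs neither $R[S]$ complete nor the regularity Eppstein provides (the "edge placed on a copy of $u_0$" must of course be read as augmenting one of the two bags by that copy, which is adjacent to all of $N$, so connectivity and the adjacency are both fine); the price is the bookkeeping with $t$ chosen larger than the bounded number of blades the re-routing consumes, which you correctly flag. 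Two minor points to tighten in a write-up: Lemma~\ref{lem:formfan} only yields a blade of density at least $\c(H)-\e$, not exactly $\c(H)$, so run the whole argument for each $\e>0$ and let $\e\to 0$ at the end; and your case split (on whether $H_2$ is a minor of some fan) together with the minimum-intersection model plays the role of the paper's case $i=0$ versus $i>0$, with the roles of $H_1$ and $H_2$ interchanged — this is harmless, since your accounting still produces the asymmetric bound $\c(H_1)+\tau(H_2)$, and the deduction of \eqref{e:sum} from \eqref{e:tau} and $\c(H_1)\ge 0$ is exactly as in the paper.
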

     
Note that $\c(H)+l-1 \leq \c(lH)$ for any positive integer $l$ and non-null graph $H$.      
Theorem~\ref{thm:infty+} together with this observation immediately imply the following corollary, which establishes in a strong form Theorem~\ref{t:Thomason} a), and provides upper and lower bounds for $\c(lK_r)$ in terms of $c(K_r)$ which differ at most by a multiplicative factor of two.

\begin{cor}\label{c:lKrLoose}
For all positive integers $l$ and $r$ we have
$$
\max\{c(K_r)+l-1, l(r-1)-1\} \leq \c(lK_r) \leq c(K_r) + (l-1)(r-1).
$$
\end{cor}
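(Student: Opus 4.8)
The plan is to obtain the two lower bounds and the upper bound of the corollary separately, each as a short deduction from facts already established.

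For the lower bound I would treat the two terms of the maximum with two different arguments. The inequality $c(K_r)+l-1\le\c(lK_r)$ is immediate from the bound $\c(H)+l-1\le\c(lH)$ recorded just before the corollary, applied with $H=K_r$, together with the fact from the introduction that $\c(H)=c(H)$ for connected $H$, so that $\c(K_r)=c(K_r)$. (That bound is itself obtained by joining a large near-extremal $K_r$-minor-free graph to a clique on $l-1$ vertices and observing that, in any putative model of $lK_r$, the $l-1$ apex vertices meet at most $l-1$ of the $l$ pairwise disjoint branch-set families, so that some copy of $K_r$ is forced to lie inside the original graph.) The inequality $l(r-1)-1\le\c(lK_r)$ follows from~(\ref{e:tau}), $c_\infty(H)\ge\tau(H)-1$, applied with $H=lK_r$, since the vertex cover number is additive over disjoint unions and hence $\tau(lK_r)=l\,\tau(K_r)=l(r-1)$.

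For the upper bound $\c(lK_r)\le c(K_r)+(l-1)(r-1)$ I would induct on $l$. The base case $l=1$ is again the identity $\c(K_r)=c(K_r)$. For $l\ge 2$, apply Theorem~\ref{thm:infty+} with $H_1=(l-1)K_r$ and $H_2=K_r$, both non-null; since $\tau(K_r)=r-1$, inequality~(\ref{e:tau2}) gives
$$\c(lK_r)\le\max\bigl\{\,c(K_r),\ \c((l-1)K_r)+(r-1)\,\bigr\}.$$
The first term is at most $c(K_r)+(l-1)(r-1)$ since $(l-1)(r-1)\ge 0$, and the inductive hypothesis $\c((l-1)K_r)\le c(K_r)+(l-2)(r-1)$ bounds the second term by the same quantity, so the maximum is at most $c(K_r)+(l-1)(r-1)$, as required.

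I do not expect a real obstacle here: all of the mathematical substance sits in Theorem~\ref{thm:infty+} and in the apex-vertex construction behind the quoted lower-bound observation. The only points requiring a line of care are checking that the maximum in~(\ref{e:tau2}) collapses to the claimed bound at every step of the induction, and keeping track of the identity $\c(K_r)=c(K_r)$ at the base of the induction and in the first lower bound.
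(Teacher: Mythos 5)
Your proposal is correct and follows essentially the same route as the paper, which derives the corollary directly from the observation $\c(H)+l-1\le\c(lH)$ (with $\c(K_r)=c(K_r)$ for the connected graph $K_r$), the bound $\c(H)\ge\tau(H)-1$ applied to $lK_r$, and iterated application of Theorem~\ref{thm:infty+} with $H_2=K_r$ for the upper bound. Your write-up merely makes explicit the induction and the apex-vertex argument that the paper leaves as immediate.
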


Theorem~\ref{thm:infty+} also implies that if  $H_1$ is a well-behaved graph, and a graph $H_2$ satisfies $\c(H_2)~\leq~\c(H_1)~+~\tau(H_2)$, then the disjoint union of $H_1$ and $H_2$ is well-behaved. Thus the disjoint union of cliques of size nine or less is well-behaved.

The inequality (\ref{e:tau2}) does not necessarily hold with $c_{\infty}$ replaced by $c$. However, it was conjectured by the third author that (\ref{e:sum}) still holds. A weaker form of this conjecture has been verified by Cs\'{o}ka et al.~\cite{CLNWY} who have shown the following.
  
\begin{thm}[Cs\'{o}ka et al.~\cite{CLNWY}]\label{t:clnwy} Let $H$ be a disjoint union of $2$-connected graphs $H_1$,$H_2$,\ldots,$H_k$. Then
	$$
	c(H) \leq c(H_1)+c(H_2)+\ldots+c(H_k)+k-1.
	$$
\end{thm}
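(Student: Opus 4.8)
The plan is to prove the bound by induction on $k$, using Theorem~\ref{thm:infty+} --- specifically the inequality~(\ref{e:sum}) --- as the engine, but working with the ordinary extremal function $c$ rather than $\c$. The key point is that $2$-connectedness is precisely what allows the argument of Theorem~\ref{thm:infty+} to be carried out for $c$ instead of $\c$: when a $2$-connected graph $H_k$ is a minor of $G$, a linkage/rooted-minor argument lets one ``clean up'' the model so that it lives in a bounded region, and the remaining graph still has large density and excludes $H_1 \sqcup \cdots \sqcup H_{k-1}$. So the first step is to reduce to the case $k=2$: assuming the result for $k-1$, write $H = H' \sqcup H_k$ with $H' = H_1 \sqcup \cdots \sqcup H_{k-1}$, and it suffices to show $c(H) \le c(H') + c(H_k) + 1$; then the inductive hypothesis gives $c(H') \le \sum_{i<k} c(H_i) + (k-2)$, and adding $c(H_k)+1$ yields the claim.

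For the base step ($k=2$), I would take a graph $G$ with $d(G) > c(H_1) + c(H_2) + 1$, assume $v(G)$ is large (passing to a dense subgraph if needed, since density only needs to be exceeded, not attained), and show $G$ contains $H_1 \sqcup H_2$ as a minor. Since $d(G) > c(H_2)$, $G$ has an $H_2$-minor; the heart of the argument is to show we can find an $H_2$-model using only a \emph{bounded} number of vertices --- here the $2$-connectedness of $H_2$ is essential, as one can first pass to a minor-minimal subgraph carrying $H_2$ and bound its size (a minor-minimal graph with an $H$-minor for $2$-connected $H$ has bounded order, roughly $v(H) \cdot (\text{max branch set size})$, and by taking $G$ large and dense we can peel off such a bounded ``core'' while leaving behind a graph $G'$ with $d(G') > c(H_1)$). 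The subtle part is the bookkeeping: removing the bounded $H_2$-core deletes only $O(1)$ vertices and $O(1)$ edges, so $e(G') \ge e(G) - O(1) > (c(H_1) + c(H_2) + 1) v(G) - O(1)$, and one wants to conclude $d(G') > c(H_1)$; this works provided $v(G)$ was chosen large enough relative to the $O(1)$ terms, which is legitimate since we only need the strict inequality $c(H) \le \ldots$ to hold in the limit.

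The main obstacle --- and the place where I expect the real work to lie --- is making rigorous the claim that an $H_2$-minor in a large dense graph can be realized on a bounded vertex set whose removal does not damage the density below $c(H_1)$. Naively deleting a minimal $H_2$-model could in principle remove many edges incident to high-degree vertices. The fix is presumably to choose the $H_2$-model greedily or to argue that in a graph of density just above $c(H_2)$ one can find an $H_2$-model that is ``sparse in its neighborhood,'' or alternatively to use the Eppstein structural lemma alluded to in the introduction to control how the $H_2$-minor sits inside $G$. This is exactly the step where the hypothesis that each $H_i$ is $2$-connected (rather than merely connected) is used, mirroring how Theorem~\ref{thm:infty+} needs $\c$ precisely because it cannot control the location of the minor for $c$. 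Once this localization is established, the induction and the arithmetic are routine.
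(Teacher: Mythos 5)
First, a point of comparison: the paper does not prove Theorem~\ref{t:clnwy} at all --- it is quoted from Cs\'{o}ka et al.~\cite{CLNWY}, and the surrounding text explicitly notes that the known proof rests on a lemma about partitioning a graph into parts with prescribed average degree, established by lengthy calculations. Your sketch does not reconstruct that argument, and the route you propose breaks down at precisely the step you yourself flag as ``where the real work lies.''

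The gap is twofold. (i) It is not true that a graph of density just above $c(H_2)$ contains an $H_2$-model on a bounded number of vertices: already for $H_2=K_3$ (where $c(K_3)=1$) there are $3$-regular graphs, of density $3/2$, with girth $\Omega(\log n)$, so every $K_3$-model occupies $\Omega(\log n)$ vertices; minimal branch sets are trees, but their sizes are not bounded. (ii) More fatally, the bookkeeping ``removing the bounded core deletes only $O(1)$ edges'' is false: deleting a set $X$ of $t$ vertices can destroy up to roughly $t\cdot v(G)$ edges, so the density of $G-X$ may be smaller than $d(G)$ by about $t$ --- a constant loss that does not vanish as $v(G)\to\infty$ --- and you cannot conclude $d(G')>c(H_1)$, however large $G$ is taken. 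This ``one unit of density per deleted vertex'' loss is exactly what the blade framework absorbs in the $c_\infty$ setting (in Theorem~\ref{thm:infty+} it is charged against $\tau(H_2)$ via Lemmas~\ref{l:bladedensity} and~\ref{l:bladetau}), but Eppstein's lemma concerns limiting densities of arbitrarily large graphs in a minor-closed class and gives nothing for $c$; the paper stresses that (\ref{e:tau2}) can fail with $c_\infty$ replaced by $c$, so falling back on that lemma cannot repair the argument. The actual proof in~\cite{CLNWY} avoids vertex deletion altogether: it partitions $V(G)$ into parts each retaining a prescribed average degree, finds $H_k$ in one part and the remaining components in the other, and it is in making that partition lemma work (and in exploiting $2$-connectivity there) that the real difficulty lies. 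Your inductive reduction to the two-part case is fine, but the base case remains unproven.
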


The proof of Theorem~\ref{t:clnwy} relies on extremal graph theory techniques, in particular, on a lemma about partitioning graphs into parts with prescribed average degree, the proof of which requires extensive calculations. In contrast, the proof of Theorem~\ref{thm:infty+} is very short, modulo the aforementioned lemma by Eppstein. 

In~\cite{CLNWY} Theorem~\ref{t:clnwy} is used to prove the following upper bound on the extremal function of the union of cycles, verifying conjectures of Reed and Wood~\cite{ReeWoo15}, and Harvey and Wood~\cite{HarWoo15}.

\begin{thm}\label{thm:cycles} Let $H$ be a disjoint union of cycles. Then
	\begin{equation}\label{e:cycledensity}
	c(H) \leq \frac{v(H)+\brm{comp}(H)}{2}-1.
	\end{equation}
\end{thm}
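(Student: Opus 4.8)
The plan is to prove Theorem~\ref{thm:cycles} by induction on $\brm{comp}(H)$, using Theorem~\ref{t:clnwy} as the engine. First I would handle the base case of a single cycle: if $H = C_n$ is a cycle on $n$ vertices, then a graph with no $C_n$ minor in particular contains no cycle of length $\geq n$, hence has circumference less than $n$; a standard fact (going back to Erd\H{o}s--Gallai, or obtainable by a short argument on a DFS tree) is that such a graph on $m$ vertices has at most $\frac{(n-2)(m-1)}{2} \leq \frac{n-2}{2} m$ edges, so $c(C_n) \leq \frac{n-2}{2} = \frac{v(C_n) + 1}{2} - 1$, which matches \eqref{e:cycledensity} since $\brm{comp}(C_n) = 1$. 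This gives the result for $k = 1$.

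For the inductive step, write $H = H_1 \sqcup H_2 \sqcup \cdots \sqcup H_k$ with each $H_i$ a cycle $C_{n_i}$, $n_i \geq 3$. Since each cycle is $2$-connected, Theorem~\ref{t:clnwy} applies directly and gives
\begin{equation*}
c(H) \leq \sum_{i=1}^{k} c(C_{n_i}) + k - 1.
\end{equation*}
Now substitute the base-case bound $c(C_{n_i}) \leq \frac{n_i - 2}{2}$ for each $i$:
\begin{equation*}
c(H) \leq \sum_{i=1}^{k} \frac{n_i - 2}{2} + k - 1 = \frac{1}{2}\sum_{i=1}^{k} n_i - k + k - 1 = \frac{v(H)}{2} - 1.
\end{equation*}
Observing that $\frac{v(H)}{2} - 1 \leq \frac{v(H) + \brm{comp}(H)}{2} - 1$ since $\brm{comp}(H) = k \geq 0$, this already yields \eqref{e:cycledensity} (in fact with room to spare). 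So no genuine induction is even needed beyond invoking Theorem~\ref{t:clnwy} once; the only input required besides that theorem is the sharp single-cycle bound $c(C_n) \leq (n-2)/2$.

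The main obstacle, and the only place real work is needed, is establishing $c(C_n) \leq (n-2)/2$, i.e.\ bounding the density of graphs with bounded circumference. One clean way: take a connected graph $G$ with no cycle of length $\geq n$, root a depth-first search tree $T$ at some vertex, and note every non-tree edge joins an ancestor to a descendant (a "back edge") spanning at most $n-1$ tree-levels, so if we assign each back edge to its lower (descendant) endpoint, each vertex receives at most $n-2$ back edges (it has at most $n-2$ proper ancestors within reach — one must be careful with the exact count, but the bound $e(G) \leq (n-1) v(G)/2$ or the slightly sharper $\frac{(n-2)(v(G)-1)}{2}$ comes out of this kind of accounting). I should double-check whether $c(C_n)$ equals $(n-2)/2$ exactly or whether a small additive correction is needed — but since \eqref{e:cycledensity} has the slack term $\brm{comp}(H)/2$, even a bound of the form $c(C_n) \leq (n-1)/2$ would suffice to close the argument, so the theorem is robust to the precise constant. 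The remaining details are routine, so I would state the single-cycle bound as a lemma (citing Erd\H{o}s--Gallai for the extremal number of long cycles) and then give the two-line deduction above.
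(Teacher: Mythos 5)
Your overall route---apply Theorem~\ref{t:clnwy} to the components (each cycle is $2$-connected) and feed in a single-cycle bound coming from Erd\H{o}s--Gallai---is exactly how this theorem is obtained: the present paper does not reprove it but cites~\cite{CLNWY}, where Theorem~\ref{t:clnwy} is used in just this way, and the Erd\H{o}s--Gallai input is recorded in the paper as Theorem~\ref{thm:ErdGal}. However, your headline single-cycle bound $c(C_n)\le (n-2)/2$ is false, and that step would fail: already for $n=3$ the $C_3$-minor-free graphs are the forests, whose density approaches $1$, so $c(C_3)=1>1/2$; for $n=4$, graphs all of whose blocks are triangles are $C_4$-minor-free and have density tending to $3/2$. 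The correct value is $c(C_n)=(n-1)/2$: a graph with no $C_n$ minor has no cycle of length at least $n$, so Theorem~\ref{thm:ErdGal} gives $e(G)\le (n-1)(v(G)-1)/2$, and gluing copies of $K_{n-1}$ at cut vertices shows this is sharp. Your DFS back-edge accounting only yields roughly $e(G)\le (n-1)v(G)$, a factor of $2$ off; the saving genuinely needs Erd\H{o}s--Gallai. Note also the arithmetic slip $\frac{n-2}{2}=\frac{v(C_n)+1}{2}-1$; the right-hand side is $\frac{n-1}{2}$.

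The consequence is that the theorem has no slack. With the correct bound, Theorem~\ref{t:clnwy} gives $c(H)\le\sum_{i=1}^k\frac{n_i-1}{2}+k-1=\frac{v(H)-k}{2}+k-1=\frac{v(H)+\brm{comp}(H)}{2}-1$, which is exactly the stated inequality, and it is attained for unions of odd cycles, where the right-hand side equals $\tau(H)-1$ and the lower bound (\ref{e:tau}) applies. So the ``room to spare'' of $\brm{comp}(H)/2$ that you perceive does not exist, and the stronger bound $\frac{v(H)}{2}-1$ you derive is false in general. That said, your hedge was the right instinct: you observed that $c(C_n)\le(n-1)/2$ would suffice, and it does---with equality in the final count---so once the false lemma is replaced by the genuine Erd\H{o}s--Gallai bound, your two-line deduction from Theorem~\ref{t:clnwy} is a complete proof, coinciding with the argument the paper attributes to Cs\'oka et al.
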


In the case when $H$ is the union of odd cycles, the right side of (\ref{e:cycledensity}) is equal to $\tau(H)-1$ and thus   the union of odd cycles is well-behaved. In most of the remaining cases for a union of cycles $H$, the exact value of $c(H)$ remains undetermined, but our next result completely  determines $c_{\infty}(H)$.

\begin{thm}\label{thm:2r}
	Let $H$ be a $2$-regular graph with $\brm{odd}(H)$ odd components. Then
	$$c_{\infty}(H)= \frac{v(H)+\brm{odd}(H)}{2}-1,$$
	unless $H=C_{2l}$, in which case $c_{\infty}(H)=l -\frac 12,$ or $H=kC_4$, in which case $c_{\infty}(H)=2k -\frac{1}{2}.$
\end{thm}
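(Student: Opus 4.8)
The plan is to prove matching upper and lower bounds for $c_\infty(H)$ when $H$ is $2$-regular. For the lower bound we already have $c_\infty(H)\ge\tau(H)-1$ from (\ref{e:tau}), and a short computation gives $\tau(H)=\tfrac{v(H)+\brm{odd}(H)}{2}$ for a disjoint union of cycles (each even cycle $C_{2k}$ contributes $k$, each odd cycle $C_{2k+1}$ contributes $k+1$). For the two exceptional families we need better lower-bound constructions: for $H=C_{2l}$ one takes a graph of treewidth $2l-2$ that is ``almost'' a sum of $K_{2l-1}$'s glued along small separators, or more simply exhibits an infinite family of $C_{2l}$-minor-free graphs of density approaching $l-\tfrac12$; similarly for $kC_4$ one builds dense graphs excluding $k$ disjoint $4$-cycles but of density approaching $2k-\tfrac12$. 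The natural candidates are graphs built from disjoint copies of $K_{2l-1}$ (resp.\ many copies of some small dense graph) connected in a path-like fashion through apex vertices, exploiting that a long cycle cannot be routed through a narrow bottleneck and that one cannot find many disjoint $C_4$'s when one component is small.

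For the upper bound, the main engine is Theorem~\ref{thm:infty+}: writing $H=H_1\sqcup H_2$ where $H_1$ is a single cycle and $H_2$ is the union of the remaining cycles, we get
$$c_\infty(H)\le\max\{c_\infty(H_2),\,c_\infty(H_1)+\tau(H_2)\},$$
and one induces on the number of components. The base cases are single cycles: $c_\infty(C_{2l+1})=l$ (an odd cycle is well-behaved since $\tau=l+1$ and $C_{2l+1}$-minor-free graphs of density approaching $l$ exist, while Theorem~\ref{thm:cycles}-type reasoning caps it) and $c_\infty(C_{2l})=l-\tfrac12$ (the even-cycle case, which is classical: $C_{2l}$-minor-free graphs have density at most $l-\tfrac12$ asymptotically, with the extremal graphs being roughly $(2l-1)$-cliques glued sparsely). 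Plugging these in, as long as at least one odd cycle is present, the ``$c_\infty(H_1)+\tau(H_2)$'' term wins and telescopes to $\tfrac{v(H)+\brm{odd}(H)}{2}-1$; careful bookkeeping of which component to peel off (always peel an even cycle first if possible, so that the surviving $H_2$ still has an odd component to anchor the bound) handles the case when $H$ has both parities. The remaining cases, where $H$ is a union of even cycles only, require a separate argument: one shows $c_\infty(H)=\tfrac{v(H)}{2}-\tfrac12$ except when all components are $C_4$, using Eppstein's lemma directly to analyze the structure of large $H$-minor-free graphs of near-extremal density.

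The hard part will be the union-of-even-cycles case, and in particular isolating exactly why $kC_4$ behaves differently. Eppstein's lemma says (roughly) that a graph with no minor in a family of disconnected graphs has a bounded number of ``large'' pieces after deleting a bounded-size set, or is itself small; applied here it should force a near-extremal $H$-minor-free graph to look like a clique-sum of bounded-size dense graphs. For a general union of even cycles the bottleneck size and the density both come out to $v(H)/2-1/2$, but $C_4$ is special because $2$-connected $C_4$-minor-free graphs have bounded size (they are exactly $K_3$ and its subgraphs together with a few small exceptions), so $kC_4$-minor-free graphs can be assembled from slightly larger blocks than the generic bound predicts, pushing $c_\infty$ up to $2k-\tfrac12$ instead of $2k-1$. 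Making this dichotomy precise — showing that for every even $2$-regular $H$ other than $C_{2l}$ and $kC_4$ the generic bound $\tfrac{v(H)}{2}-\tfrac12$ is both achievable and optimal — is where the real work lies; the odd-component cases are essentially immediate from (\ref{e:tau}) and Theorem~\ref{thm:infty+}.
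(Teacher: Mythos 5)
The decisive gap is the case where $H$ is a union of at least two even cycles. There your proposal targets the wrong number and contains no argument: you state that for such $H$ one shows $c_\infty(H)=\frac{v(H)}{2}-\frac12$, but the theorem asserts $c_\infty(H)=\frac{v(H)}{2}-1$ whenever $H$ is neither a single even cycle nor $kC_4$ (for instance $c_\infty(2C_6)=5$, not $5.5$); your later sentence about $2k-\frac12$ ``instead of $2k-1$'' contradicts this, so the proposal is internally inconsistent about which value is generic. The number $\frac{v(H)}{2}-\frac12$ is exactly what Theorem~\ref{thm:infty+} gives when the $c_\infty$-slot is forced to carry a single even cycle (unavoidable when $H$ consists of exactly two even cycles), since then $c_\infty(C_{2j})+\tau(H_2)=\frac{v(H)}{2}-\frac12$; so no bookkeeping with that theorem can close the last $\frac12$, and a finer argument is required. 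Your appeal to ``Eppstein's lemma directly'' does not supply it: Lemma~\ref{lem:formfan} does not say that $H$-minor-free graphs split into large pieces after deleting a bounded set; it says that the extremal density is approached by fans, i.e.\ one may work with regular blades $(G,S)$. The missing idea --- the heart of the paper's proof, which is a single induction on $v(H)$ over regular blades via Corollary~\ref{c:fanden} --- is a routing construction inside the fan: if there are distinct $u_1,u_2\in S$ with distinct neighbours in $V(G)-S$, then for every $S'\subseteq S$ containing them with $|S'|=k\ge2$, concatenating $S'$-jumps taken in different copies of $G$ yields a cycle minor of length at least $2k+2$ in $\mc{B}[S']$. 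Hence the longest cycle $C$ of $H$, when $v(C)\ge5$, can be realized using only $\lceil v(C)/2\rceil-1$ vertices of $S$, one fewer than $\tau(C)$, and via Lemmas~\ref{l:subblade} and~\ref{l:bladedensity} this saving of one vertex is exactly what turns $-\frac12$ into $-1$; the trick needs $k\ge2$, so it fails precisely for $C_4$, which is the true reason $kC_4$ is exceptional. Your heuristic that $2$-connected $C_4$-minor-free graphs are bounded points in the right direction but yields neither the upper bound nor the dichotomy.

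Two smaller problems. First, your bookkeeping in the mixed-parity case is backwards: with your convention ($H_1$ the peeled cycle, occupying the $c_\infty$-slot), peeling an even cycle gives $c_\infty(C_{2j})+\tau(H_2)=\frac{v(H)+\brm{odd}(H)-1}{2}$, overshooting by $\frac12$; you must peel an \emph{odd} cycle, for which $c_\infty(C_{2j+1})+\tau(H_2)=\frac{v(H)+\brm{odd}(H)}{2}-1$ exactly (equivalently, put the even cycle in the $\tau$-slot). With that fix, all cases with $\brm{odd}(H)\ge1$ do follow from Theorem~\ref{thm:infty+}, (\ref{e:singlecycle}) and (\ref{e:tau}), which is a legitimate shortcut compared with the paper's unified blade induction. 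Second, the lower bounds for the exceptional families are not pinned down: for $kC_4$ the construction must ensure that a single dominating vertex cannot complete a $C_4$-minor, so the part outside the apex clique must contain no path on three vertices --- e.g.\ a clique of $2k-1$ dominating vertices placed over a large perfect matching has density tending to $2k-\frac12$, and every $C_4$-minor in it uses at least two apex vertices, so there is no $kC_4$-minor --- whereas ``small dense blocks joined through apex vertices'' as you describe would typically already contain $k$ disjoint $C_4$-minors (one apex over a path on three vertices gives a $C_4$). For $C_{2l}$, trees of copies of $K_{2l-1}$ glued at single vertices do the job.
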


Next we turn to investigating unions of large cliques. Theorem~\ref{t:Thomason} b) and Theorem~\ref{thm:infty+} imply that for every $r$ there exists $l_0=l_0(r) \leq 20c(K_r)$, such that $lK_r$ is not well-behaved for $l<l_0$ and $lK_r$ is well-behaved for $l \geq l_0$. It follows from (\ref{e:Thomason}) that $l_0(r) \geq (\lambda+o_r(1))\sqrt{\log{r}}$. Thomason mentions in~\cite{Tho07,Tho08} that it is likely that $l_0(r) = \Theta(\sqrt{\log{r}})$. This prediction is motivated  by the belief that for large enough $r$ and any $l$, the extremal examples  should either be ``close" to being $K_r$-minor free or of the form $\bar{K}_{l(r-1)-1,n}$ for some $n$.   

We show that Thomason's prediction is almost, but not quite correct, as the next theorem implies that $l_0(r)={\Theta}(\log {r}/\log\log{r})$.  The main reason for the discrepancy is that for a certain range of $l$ we exhibit extremal examples, which do not have the structure suggested in~\cite{Tho08}, but are obtained by gluing certain non-uniform random graphs. 

\begin{thm}\label{thm:main} There exist constants $c,C>0$ such that for every positive integer $r$ 
	\begin{flalign*}
	&a)\qquad \c(lK_r) > l(r-1)-1 \qquad \mathrm{for} \qquad l \leq  c\frac{\log r}{\log\log r} \\
	&b)\qquad \c(lK_r) = l(r-1)-1 \qquad \mathrm{for} \qquad l \geq C\frac{\log r}{\log\log r}.
	\end{flalign*}
\end{thm}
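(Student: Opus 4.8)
The plan is to prove parts a) and b) separately, using the extremal function $c(K_r) = (\lambda + o_r(1)) r\sqrt{\log r}$ as a black box together with Theorem~\ref{thm:infty+}.

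For part b), the goal is an upper bound $\c(lK_r) \le l(r-1)-1$ once $l \ge C\log r/\log\log r$; the matching lower bound is the trivial $\c(lK_r) \ge l(r-1)-1$ noted in the excerpt (since $\bar K_{l(r-1)-1,n}$ has no $lK_r$-minor, or via $\c(H)+l-1 \le \c(lH)$ applied to $H = (l-1)K_r$... more directly $\tau(lK_r) = l(r-1)+\text{something}$ — actually $\tau(lK_r)=l(r-1)$ if... let me just say it follows from \eqref{e:tau} that $\c(lK_r)\ge \tau(lK_r)-1 = l(r-1)-1$). For the upper bound I would iterate Theorem~\ref{thm:infty+} with $H_1 = (l-1)K_r$ and $H_2 = K_r$: from \eqref{e:tau2}, $\c(lK_r) \le \max\{c(K_r),\ \c((l-1)K_r) + (r-1)\}$. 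Once the second term dominates we get $\c(lK_r) \le \c((l-1)K_r)+(r-1)$, and unrolling the recursion down to the base case $l_1 = l_1(r)$ where $c(K_r) \le l_1(r-1)-1$ — that is, $l_1 = \Theta(\sqrt{\log r})$ — would only give $\c(lK_r) \le c(K_r) + (l-l_1)(r-1)$, i.e.\ Corollary~\ref{c:lKrLoose}, which is \emph{not} good enough. So the real content of b) is a sharper structural argument: I would use Eppstein's lemma directly to decompose a large $lK_r$-minor-free graph $G$ and argue that either $G$ is globally sparse (density $< l(r-1)-1$) or it has a small separator across which one side contains a $(l')K_r$-minor for some $l' \ge 1$ and the other a $(l-l')K_r$-minor, unless the density is controlled. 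The key quantitative input is that a graph of density $\ge r-1 + \varepsilon$ and enough vertices contains many disjoint $K_r$-minors (or a $K_r$-minor in every large piece); pushing $l$ up to $\Theta(\log r/\log\log r)$ rather than $\Theta(\sqrt{\log r})$ must come from the fact that the "cost" of excluding $K_r$ near a separator is only $c(K_r) = O(r\sqrt{\log r})$ once, amortized over $l$ copies, so we need $l(r-1) \gtrsim c(K_r) = \Theta(r\sqrt{\log r})$... which gives $l = \Theta(\sqrt{\log r})$ again. The resolution — and this is where part a) comes in — is that the true threshold is governed not by $c(K_r)$ but by how sparse a single graph can be while still ``blocking'' one copy of $K_r$ when glued $l$ times, which is a $\Theta(\log r/\log\log r)$ phenomenon. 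I would formalize this by proving: a graph $G$ with $d(G) \ge r-1+1/(2l)$ and $v(G)$ large enough contains $l$ vertex-disjoint $K_r$-minors — the needed bound being roughly that excluding $l$ disjoint $K_r$-minors forces density below $r-1 + O(1/l)$ plus lower-order terms, with the $O(1/l)$ slack shrinking only when $l \gtrsim \log r/\log\log r$ because of the superlinear-in-$r$ growth of $c(K_r)$ relative to the linear savings per copy. The correct inequality to chase is: excluding $lK_r$ as a minor forces $d(G) \le (l-1)(r-1) + c(K_r)/\nu$ where $\nu$ counts disjoint dense ``chunks'', and optimizing forces $l = \Omega(\log r/\log\log r)$ for this to reach $l(r-1)-1$.

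For part a), the goal is a construction: for $l \le c\log r/\log\log r$, build an $lK_r$-minor-free graph of density strictly greater than $l(r-1)-1$. The excerpt explicitly flags that the right construction is \emph{not} of Thomason's form $\bar K_{l(r-1)-1,n}$ but rather obtained ``by gluing certain non-uniform random graphs.'' I would take a random graph $R$ on $m$ vertices with a carefully chosen edge density — tuned so that $R$ is $K_r$-minor-free with reasonable probability yet has density close to $c(K_r)/m \cdot m = \Theta(r\sqrt{\log r})$ — or more likely a non-uniform model where a small ``core'' of $\Theta(r\sqrt{\log r})$-density coexists with a large low-degree periphery, so that the \emph{average} density is slightly above $l(r-1)-1$ but the graph still has no $K_r$-minor localized to any part. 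Then form $G$ by taking $l-1$ disjoint cliques $K_r$ (or $\bar K$-type gadgets contributing density $\approx (l-1)(r-1)$) glued to one copy of $R$ along a shared vertex set or via an apex construction, so that any $lK_r$-minor would need all $l$ copies disjoint, forcing one copy entirely inside $R$ — contradiction. The density calculation would show $d(G) > l(r-1)-1$ precisely when the periphery of $R$ is large enough relative to $l$, which is possible iff $l = O(\log r/\log\log r)$: the gain over $l(r-1)-1$ is of order $(c(K_r) - (r-1))/(\text{size of } R) \sim r\sqrt{\log r}/m$, and to keep this positive while $R$ has enough vertices to dilute down to a valid average we need $m$ bounded in terms of $l$, giving the stated threshold after equating $\sqrt{\log r} \asymp (\log r/\log\log r)$-scaled quantities. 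The main obstacle, and the crux of the whole theorem, is identifying the correct ``non-uniform random graph'' gadget in part a) and proving it is $K_r$-minor-free with the required density — standard random graphs give density $\Theta(r\sqrt{\log r})$ at the minor threshold (Bollobás–Catlin–Erdős / Thomason), but here we need a family interpolating between that and the sparse $\bar K$ examples, and controlling the minor-freeness of such interpolated graphs (likely via a union bound over potential branch-set systems, as in Thomason's and Fountoulakis–Kühn–Osthus's analyses) is the technically demanding step. The $\log r/\log\log r$ rather than $\sqrt{\log r}$ bound will emerge from a more careful second-moment or entropy computation than the crude one behind Corollary~\ref{c:lKrLoose}.
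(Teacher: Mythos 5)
Your proposal correctly diagnoses that iterating Theorem~\ref{thm:infty+} only yields Corollary~\ref{c:lKrLoose} and that a random construction is needed for a), but both halves have genuine gaps. For part b), the lemma you propose to ``formalize'' the argument --- that a sufficiently large graph of density $r-1+1/(2l)$ contains $l$ disjoint $K_r$-minors --- is false: there are arbitrarily large $K_r$-minor-free graphs of density $(\lambda+o(1))r\sqrt{\log r}\gg r$, so no density bound of the form $r-1+O(1/l)$ on a raw graph can force even one $K_r$-minor. The statement that is actually true, and is the heart of the paper's proof, concerns blades $(G,S)$: after Eppstein's reduction one must exploit the shared apex set $S$. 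The paper passes to hefty blades (Lemmas~\ref{lem:densemodel}, \ref{lem:ReedWood}, \ref{lem:heftyblade}) and then shows (Lemma~\ref{lem:heftyblade1} combined with Lemma~\ref{l:semicomplete1}) that a hefty blade of density exceeding $l(r-1)-1$ contains $lK_r$, because each copy of $K_r$ needs only $r-k$ apex vertices that are almost-complete to the hefty part together with a $K_k$-minor inside it, where $k\approx Da/l$; the threshold $\log r/\log\log r$ emerges from the requirement $k\log r/\log(1/\delta)\ll a$ in Lemma~\ref{l:semicomplete1} (equivalently $\log k\le \log r-\tfrac13\log\log r$ in the final computation), not from any amortization of $c(K_r)$ over separators. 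Your separator sketch never engages with $S$, and the inequality you ``chase'' ($d(G)\le (l-1)(r-1)+c(K_r)/\nu$) is neither stated precisely nor provable as written.

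For part a), the construction you describe does not control $c_\infty$: gluing $l-1$ disjoint copies of $K_r$ to a single $K_r$-minor-free graph $R$ gives one bounded graph, and any way of replicating it to obtain arbitrarily large graphs (disjoint copies, or fanning over a small shared set) re-creates $l$ disjoint $K_r$'s from the replicated clique gadgets unless the entire clique budget of size roughly $l(r-1)$ is a single shared apex clique; moreover $l-1$ disjoint $K_r$'s have density $(r-1)/2$, not $(l-1)(r-1)$, so your density bookkeeping does not reach $l(r-1)-1$. The paper's construction (Theorem~\ref{t:lKrLower}) is one blade: a shared clique $B$ with $|B|=l(b+1)-1$ and $b=r-k$, $k\approx\lambda^2r\log r/(4l^2)$, joined by random bipartite edges of probability $1-e^{-\beta}$ to a sparse random part $A$ living inside each copy of the fan. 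An $lK_r$-minor of the fan forces, by pigeonhole over the partition of $B$, some copy of $K_r$ to use at most $b$ vertices of $B$ together with a single copy of $A$, and Lemma~\ref{l:construction} rules this out with high probability by a first-moment/AM--GM computation over blueprints. The gain over $l(r-1)-1$ is $\Theta(r\log r/l)$, and the threshold $l\lesssim\log r/\log\log r$ is precisely where the loss term $lr\exp(-\Theta(\eps\log r/l))$ coming from the imperfect attachment of $B$ to $A$ stops swamping that gain --- a mechanism quite different from the $r\sqrt{\log r}/m$ heuristic in your sketch. These quantitative steps are the substance of the theorem and are missing from the proposal.
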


Additionally, the next two theorems provide upper and lower bounds on $\c(lK_r)$, which allow us to approximate the error term $\c(lK_r) - lr$ in the range where this term is substantial, i.e. $l = o(\log {r}/\log\log{r})$.

\begin{thm}\label{thm:lower} Let $\lambda$ be  Thomason's constant. For $l=\omega(\sqrt{\log n})$ and $l =  o(\log {r}/\log\log{r})$ we have
	$$\c(lK_r) \geq  lr + (1-o(1)) 
	\frac{\lambda^2r\log r}{4l}.$$
\end{thm}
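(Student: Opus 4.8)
The plan is to construct, for the given range of $l$, an $lK_r$-minor free graph $G$ on $n$ vertices with $e(G)/n \geq lr + (1-o(1))\lambda^2 r\log r/(4l)$, and then invoke the definition of $c_\infty$. The guiding idea, as hinted in the introduction, is \emph{gluing a non-uniform random graph}: take a single dense ``core'' graph $R$ that is $K_r$-minor free but whose density substantially exceeds $r$, and attach to it a large stable set $E_t$, every vertex of which is adjacent to a fixed set $X$ of roughly $l(r-1)$ vertices. The stable set contributes average degree close to $|X| = l(r-1)$, while the core $R$ boosts the density further; the trick is that, if $R$ is chosen with care, the resulting graph still excludes $lK_r$ as a minor because any $l$ disjoint $K_r$-models would force one of them to live essentially inside $R$, contradicting the $K_r$-minor freeness of $R$.

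First I would pin down the core. By Thomason's theorem~(\ref{e:Thomason}), for each $m$ there is a $K_m$-minor free graph on $N$ vertices of density $(\lambda + o(1)) m\sqrt{\log m}$; the standard constructions (dense random graphs $G(N,p)$ with an appropriately tuned $p$) in fact give graphs that are robustly $K_m$-minor free in the sense that even after deleting a bounded number of vertices they remain $K_{m'}$-minor free for $m'$ only slightly smaller. I would take $m \approx r + \lambda^2 r\log r/(2 l^2)$ — chosen precisely so that $(\lambda+o(1)) m\sqrt{\log m}$, the density achievable while excluding $K_m$, works out (after the dilution caused by adding the stable set, which costs a factor of roughly $l-1$ off the ``excess'' density) to the target bound $lr + (1-o(1))\lambda^2 r\log r/(4l)$. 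The second step is the assembly: form $G$ by taking the core $R$ on $N$ vertices, adding an independent set $E_t$ of size $t \gg N$, picking a set $X$ of size $l(r-1) - v(R)$ (or distributing the attachment appropriately among several disjoint copies of $R$ — this is where the non-uniformity enters, and the exact bookkeeping of how many core copies versus how large a stable set optimizes the density is a routine but slightly delicate optimization), and joining every vertex of $E_t$ to all of $X$. Letting $t \to \infty$ drives $e(G)/n$ to the claimed value.

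The heart of the argument — and the step I expect to be the main obstacle — is verifying that $G$ has no $lK_r$ minor. Suppose it did; then there are $l$ vertex-disjoint connected subgraphs (branch sets arranged into models) each yielding a $K_r$ minor. The vertices of the stable set $E_t$ have their entire neighbourhood inside $X$, so within any single $K_r$-model the $E_t$-vertices can only be used to ``route through $X$'', and a counting/pigeonhole argument on $|X| = l(r-1)$ shows that the $l$ models together can absorb at most $l(r-1)$ of the ``useful'' vertices outside the cores, forcing at least one model to be realized using only the core part $R$ plus at most $r-1$ extra vertices that, after contraction, effectively reduce to a $K_{r + \text{something}}$-minor inside a bounded modification of $R$. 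Choosing $m$ slightly larger than this effective clique size makes that impossible. Making this precise requires a clean lemma of the form ``if $G = R + (E_t \text{ joined to } X)$ and $G$ has a $K_s$ minor, then $R$ has a $K_{s - |X|}$ minor after deleting few vertices'', which I would state and prove separately; Eppstein's structural lemma, already in play for Theorem~\ref{thm:infty+}, is the natural tool for controlling how minor models interact with the apex-like set $X$. The final step is to assemble the pieces, optimize the split between the number of core copies and the size of $X$, take $t\to\infty$, and read off the density bound; the error analysis in the exponents of $\log$ is the routine part.
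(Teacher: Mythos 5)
Your overall architecture (a fan/apex construction, plus the pigeonhole that $l$ disjoint $K_r$-models must split the shared set, so one model sees only a $1/l$ fraction of it -- this is Lemma~\ref{l:subblade}) is indeed the skeleton of the paper's proof. But the two places where the actual work lies are both wrong or missing in your proposal.

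First, the density bookkeeping does not reach the target. In your assembly the core $R$ has bounded size while $t\to\infty$, so the limiting density of your graph is at most $|X|=l(r-1)-v(R)$: the core's density boost washes out entirely in the limit, and since a $K_m$-minor-free graph of density $\approx\lambda m\sqrt{\log m}$ forces $v(R)\gtrsim 2\lambda r\sqrt{\log r}$, which dwarfs the gain $\lambda^2 r\log r/(4l)=o(r\sqrt{\log r})$ you are after (recall $l=\omega(\sqrt{\log r})$), you end up strictly below $lr$, not above it. The paper's construction (Theorem~\ref{t:lKrLower}) instead attaches to a common clique $B$ of size about $l(r-k)$ -- deliberately \emph{smaller} than $l(r-1)$, with $k\approx\lambda^2 r\log r/(4l^2)$ -- a growing dense part $A$ of size $\Theta(r\log r/l)$ in \emph{every} fan copy; the per-copy graph has constant edge density $1-e^{-\alpha}$ and contributes $\approx 2lk$ to the blade density, overcompensating the deficit $lk$ in the clique. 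Second, and more fundamentally, your minor-freeness mechanism fails: the obstruction is not a $K_m$ minor in the core alone but a $K_r$ minor in core-plus-apex. If the bulk vertices are completely joined to the apex set, then a $K_r$ minor using only $r-k$ apex vertices reduces to finding a $K_k$ minor inside the core, and since $c(K_k)\approx\lambda k\sqrt{\log k}=o(r\sqrt{\log r})$ here, any core dense enough to be useful contains one; so no deterministic ``robustly $K_m$-minor-free'' core with complete joins can work, and your proposed reduction lemma (``$K_s$ minor in $G$ gives $K_{s-|X|}$ minor in $R$'') is vacuous once $|X|\geq r$. This is exactly why the paper makes the $A$--$B$ attachment \emph{random} with probability $1-e^{-\beta}$, tuned so that $k$ branch sets cannot all see all $\approx r-k$ clique vertices, and proves the Bollob\'as--Catlin--Erd\H{o}s-type probabilistic estimate (Lemma~\ref{l:construction}, a union bound over blueprints with an AM--GM step) to rule out such $K_r$ minors; balancing $\beta$ against the density loss $lre^{-\beta}$ is where the error term and the hypothesis $l=o(\log r/\log\log r)$ enter. (A side remark: Eppstein's lemma is the tool for the \emph{upper} bounds; for the lower bound one only needs the easy direction of Corollary~\ref{c:fanden}, that an $lK_r$-minor-free blade certifies $\c(lK_r)\geq d(\mc{B})$.)
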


\begin{thm}\label{thm:upper} There exists a constant $C_u > 0$ such that for all positive integers $l,r$
	$$\c(lK_r) \leq  lr +  
	C_u\frac{r\log r}{l}.$$
\end{thm}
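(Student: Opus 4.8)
\section{Proof of Theorem~\ref{thm:upper}}

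The plan is to bound the density of an extremal $lK_r$-minor-free graph by combining an extraction of clique minors with Eppstein's structural lemma and Thomason's precise asymptotics for $c(K_t)$. Let $G$ be an $lK_r$-minor-free graph on $n$ vertices with $d(G)$ within $o(1)$ of $\c(lK_r)$; we must bound $d:=d(G)$. First I would clean up $G$: deleting vertices of degree below $d$ produces a subgraph of minimum degree at least $d$ without lowering the density, so we may assume $\delta(G)\ge d$. More importantly, I would invoke Eppstein's lemma to pass to a normal form, so that $G$ is (essentially) built from a bounded-size ``core'' together with many pairwise vertex-disjoint copies of a single bounded ``gadget'' $P$, each attached to a common bounded interface $S\subseteq V(G)$, and
\[
\c(lK_r)=\frac{e(P)+e_G(P,S)}{v(P)},
\]
where $e_G(P,S)$ counts the $P$--$S$ edges incident to one gadget. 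It then suffices to bound the right-hand side.

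The next step is to observe that the gadget has small Hadwiger number. Since $G$ contains arbitrarily many pairwise vertex-disjoint copies of $P$ but no $l$ disjoint $K_r$-minors, $P$ itself is $K_r$-minor-free; writing $t:=h(P)\le r-1$ for its Hadwiger number we get $e(P)/v(P)\le c(K_{t+1})$, hence
\[
\c(lK_r)\ \le\ c(K_{t+1})+\frac{e_G(P,S)}{v(P)}.
\]
A crude estimate of the interface term by $|S|$, together with a bound of the form $|S|=O(lr)$, already gives $\c(lK_r)\le c(K_r)+O(lr)=lr+O(r\sqrt{\log r})$, which is within the desired $lr+O(r\log r/l)$ when $l=O(\sqrt{\log r})$; the real content of the theorem is obtaining $O(r\log r/l)$ for all $l$.

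For that I would analyze the interface term carefully. The governing phenomenon is a trade-off: if $S$ is large and densely joined to the gadget, then one can upgrade the $K_t$-model of a single gadget to a $K_r$-model using few vertices of $S$, and then --- exploiting that the gadgets are interchangeable and that $S$ may be split among roughly $l$ of them --- assemble $l$ pairwise vertex-disjoint $K_r$-minors, contradicting $lK_r$-minor-freeness. Quantifying this requires a (fractional) lower bound on the number of $S$-vertices needed to raise a $K_t$-minor to a $K_r$-minor, in terms of the deficiency $r-t$ and the $P$--$S$ edge-density, then feeding this into a partition argument over the $\approx l$ gadgets, and finally optimizing the resulting bound on $c(K_{t+1})+e_G(P,S)/v(P)$ over $t$ using $c(K_{t+1})=(\lambda+o_r(1))(t+1)\sqrt{\log(t+1)}$; the maximum works out to $lr+O(r\log r/l)$, yielding the claimed absolute constant $C_u$.

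\textbf{The main obstacle} will be exactly this last analysis of the interface. One must handle gadgets whose $K_t$-model has branch sets larger than singletons, and interfaces that are far from complete to the bulk; indeed it is precisely the non-complete interfaces that force the error term to be of order $r\log r/l$ rather than the smaller quantity produced by the crude bound, and that are responsible for the ``gluing non-uniform random graphs'' extremal examples mentioned after Theorem~\ref{thm:main}. Making the partition and averaging over the many interchangeable gadgets rigorous, and pinning down the constant via the precise behaviour of $c(K_t)$, is where essentially all of the work lies; everything else is bookkeeping once Eppstein's lemma is in hand.
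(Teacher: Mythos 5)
Your opening moves are sound and match the paper's starting point: passing to a regular blade $(G,S)$ via Eppstein's lemma (Corollary~\ref{c:fanden}), noting that the gadget $G\setminus S$ is $K_r$-minor-free, and the crude bound $d(G,S)\le c(K_r)+|S|\le c(K_r)+l(r-1)-1$ (using Lemma~\ref{l:bladetau}), which indeed settles the range $l=O(\sqrt{\log r})$. But the entire content of the theorem is the error term $O(r\log r/l)$ for larger $l$, and there your argument is only a plan: the step ``upgrade the $K_t$-model of a single gadget to a $K_r$-model using few vertices of $S$, then assemble $l$ disjoint $K_r$-minors'' is precisely what you defer, and it does not follow from knowing the Hadwiger number $t=h(P)$ of the gadget. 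A maximum clique minor of the gadget comes with no control on the adjacency of its branch sets to the particular interface vertices you want to absorb, and the paper's own lower-bound construction (a sparse random gadget with interface vertices attached with small probability $\beta$) shows that a per-vertex ``fractional cost'' of raising $K_t$ to $K_r$ cannot be bounded the way you suggest. What must actually be counted is the number of interface vertices joined to \emph{almost all} of the gadget, and one must then build a clique minor of the gadget every one of whose branch sets meets the neighborhoods of those specific vertices; your proposed optimization over $t$ using Thomason's asymptotics for $c(K_{t+1})$ does not produce the $r\log r/l$ term and is not how the bound arises.

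The machinery your sketch is missing is exactly what the paper builds in Sections~\ref{s:tools} and~\ref{s:main}. First, Lemmas~\ref{lem:densemodel}, \ref{lem:ReedWood} and~\ref{lem:heftyblade} replace the gadget by a \emph{hefty} minor on $a$ vertices while converting the blade density into at least $(l(r-1)-1+\Delta-Da)a$ edges from $S$ into the gadget, where $\Delta=C_u r\log r/l$; since $|S|<l(r-1)$ this forces $a\gtrsim r\log r/l$, which is where the error term is actually spent. Second, Lemma~\ref{lem:heftyblade1} is the counting/partition step you gesture at: either one finds $\approx l$ disjoint groups of $r-k$ interface vertices each nearly complete to the gadget (plus groups of $r-1$ arbitrary interface vertices handled by contracting the gadget to a point), or the total $S$-to-gadget edge count is too small. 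Third, Lemma~\ref{l:semicomplete2}, whose engine is the new extremal bound $c(K_{s,t})\le 40(\sqrt{st\log s}+s+t)$ of Theorem~\ref{t:bipartite}, constructs inside a random dense subset of the hefty gadget a $\bar{K}_{k,r-k}$-minor whose independent-side bags can be matched (via Lemma~\ref{lem:heftymodel2}) to the $r-k$ semicomplete interface vertices, giving a $K_r$ minor; with $k\approx Da/l$ the requirement $\sqrt{rk\log r}<\eps a$ is exactly what $a\gtrsim r\log r/l$ guarantees. None of this is supplied or replaced by an alternative in your proposal --- you correctly identify the non-complete interfaces as the main obstacle, but the proposal stops where the proof has to begin, so as it stands it is a proof only in the easy regime $l=O(\sqrt{\log r})$.
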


As one of the ingredients in the proof of Theorem~\ref{thm:upper} we need an upper bound on the extremal function $c(K_{s,t})$ which is within a constant factor of optimal. This extremal function has been extensively investigated in the past.
It follows from the results of Kostochka~\cite{Kostochka84} and Thomason~\cite{Thomason84} that $c(K_{s,t}) = O(t\sqrt{\log t})$ for all $s\leq t$. Myers~\cite{Myers03} considered $c(K_{s,t})$ for $s \ll t$ and conjectured that $c(K_{s,t}) \leq c_s t$ for some constant independent on $t$. K\"uhn and Osthus~\cite{KuhOst05} and Kostochka and Prince~\cite{KosPri08} have independently proved this conjecture by showing that $c(K_{s,t}) =(1/2+o(1))t$ for $s \ll t$.  Unfortunately, none of the above bounds suffice for our purpose and we prove the following result, which is tighter in the regime $s=\omega(t/\log t)$ and $s  = o(t)$. 

\begin{thm}\label{t:bipartite}
	Let $t\geq s \geq 2$ be positive integers. Then 
	$$c(K_{s,t}) \leq 40 (\sqrt{st \log s}+s+t).$$ 
\end{thm}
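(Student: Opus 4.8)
The plan is to prove the bound $c(K_{s,t}) \le 40(\sqrt{st\log s} + s + t)$ by the standard density-increment / minimum-degree argument: it suffices to show that every graph $G$ with $\delta(G) \ge 80(\sqrt{st\log s}+s+t)$ contains $K_{s,t}$ as a minor, since a graph of density $d$ contains a subgraph of minimum degree at least $d$, and $c(K_{s,t})$ is within a factor $2$ of the threshold minimum degree forcing the minor. So fix such a $G$ and let $D = \delta(G)$. First I would pass to a highly connected piece: a classical result (Mader) guarantees that $G$ has a subgraph $G'$ that is $\Omega(D)$-connected, and in particular $G'$ has at least $\Omega(D)$ vertices and average degree $\Omega(D)$. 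Working inside $G'$, the goal is to build $s$ "hub" branch sets, each a single vertex or small connected set, that are pairwise adjacent (forming the $K_s$ side, which is automatically also adjacent to everything), together with $t$ further connected branch sets ("leaves"), pairwise disjoint from each other and from the hubs, each of which sends an edge to every hub.

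The core step is the leaf construction, and this is where I expect the main difficulty to lie, since it is exactly the point where the $\sqrt{st\log s}$ term must be extracted rather than a weaker $t\sqrt{\log t}$ or $st$ bound. The idea is a random/greedy partition argument in the spirit of Kostochka–Thomason: having reserved $s$ hub vertices $x_1,\dots,x_s$ of high degree inside the dense connected subgraph, one wants to find $t$ disjoint connected sets each meeting $N(x_1)\cap\cdots\cap N(x_s)$ — or, more robustly, each reaching all $s$ hubs through short paths in $G' - \{x_1,\dots,x_s\}$. Concretely, I would take a random balanced partition of $V(G') \setminus \{x_1,\dots,x_s\}$ into $t$ parts; for a single hub $x_i$ and a single part $P_j$, the probability that $P_j$ fails to contain a neighbour of $x_i$ is roughly $(1 - 1/t)^{\deg(x_i)} \le e^{-D/t}$. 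Union-bounding over the $st$ pairs $(i,j)$, this failure probability is at most $st\, e^{-D/t}$, which is $o(1)$ precisely when $D = \Omega(t\log(st)) = \Omega(t\log s + t\log t)$; combined with the complementary regime this is what produces the $\sqrt{st\log s}$ shape after optimizing, because when $t$ is much larger than $s$ one instead uses the denser $K_{s,t}$ structure directly. Each part $P_j$, being a random $\Theta(D)$-sized chunk of an $\Omega(D)$-connected graph, can be assumed connected (or trimmed to its dominating connected component at a bounded loss), and then the set $P_j$ together with one edge to each $x_i$ gives leaf number $j$.

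Assembling the pieces: the $s$ hubs plus the $t$ leaves, after contracting each part, yield a $K_{s,t}$ minor, contradicting the assumption that $G$ is $K_{s,t}$-minor-free. The bookkeeping I would be careful about is (i) choosing the connectivity/degree constants so that removing the $s$ hub vertices leaves a subgraph that is still connected and dense enough for the random partition to go through — this costs only additive $O(s)$ in the degree, explaining the $+s$ term; (ii) handling the two regimes $s \le t$ with the single clean expression $\sqrt{st\log s} + s + t$, where the $+t$ term covers the range where $\log s$ is tiny (e.g.\ $s$ bounded) and one still needs each part to have size at least, say, $2$; and (iii) replacing "random" by "greedy/derandomized" if one wants an explicit argument, though for an upper bound on $c(K_{s,t})$ the probabilistic existence statement suffices. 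The explicit constant $40$ is then just a matter of tracking the constants through Mader's connectivity bound, the $e^{-D/t}$ estimate, and the factor-$2$ loss in passing from minimum degree to density; I would not attempt to optimize it.
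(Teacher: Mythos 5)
Your overall frame (reduce to a minimum-degree/dense subgraph, then build the two sides of $K_{s,t}$ by a random construction and a union bound) is in the right spirit, but the core quantitative step does not deliver the claimed bound, and that is exactly the step you flag as the crux. With single-vertex hubs $x_1,\dots,x_s$ and a random balanced partition into $t$ parts, the failure probability per hub--part pair is $e^{-\Theta(D/t)}$, so your union bound over the $st$ pairs forces $D=\Omega(t\log(st))$. That is asymptotically weaker than $40(\sqrt{st\log s}+s+t)$ throughout the interesting range: for $s=t$ you need $t\log t$ versus the target $t\sqrt{\log t}$, and even for bounded $s$ the $\log t$ coming from the union bound over parts already exceeds the $+t$ term. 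The appeal to ``the complementary regime'' and ``optimizing'' does not close this, because the regime where the theorem is actually needed in this paper is $s=\omega(t/\log t)$, where no denser-structure shortcut is available. The missing idea is to blow up \emph{both} sides into small branch sets of carefully balanced sizes: bags of size $k\approx\sqrt{t\log s/s}+2$ for the $s$-side and $l\approx\sqrt{s\log s/t}+2$ for the $t$-side, chosen at random inside a graph of minimum degree a constant fraction of its order (a ``hefty'' minor, obtained by a Mader-type lemma rather than just a highly connected subgraph). Then a fixed cross pair fails to be joined with probability exponentially small in $kl\gtrsim\log s$, the union bound is taken only over the $s$ bags for each candidate leaf (sampling $2t$ candidate leaves and keeping $t$ ``perfect'' ones), and the total vertex cost is $sk+tl=O(\sqrt{st\log s}+s+t)$, which is where the stated bound comes from.

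A secondary gap: you assert that each random part, ``being a random $\Theta(D)$-sized chunk of an $\Omega(D)$-connected graph, can be assumed connected (or trimmed to its dominating connected component at a bounded loss).'' Random subsets of highly connected graphs need not be connected, and trimming can destroy the adjacencies you just established. In the paper this is handled by working only with a \emph{premodel} (bags need not be connected) and then repairing connectivity inside the hefty graph by repeatedly absorbing common neighbours of the missing adjacencies, which costs only $O(\text{defect})$ extra vertices and succeeds because all bags together occupy a small constant fraction of the hefty graph. Without some such absorption mechanism (or an explicit argument that the parts can be made connected without losing the hub adjacencies), the final contraction step of your construction is not justified.
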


Theorem~\ref{t:bipartite} additionally answers a question of Harvey and Wood~\cite{HarWooAverage15}. They have asked whether there exists a constant $\eps > 0$ such that for every graph $H$ on $n$ vertices we have \begin{equation}\label{e:HW}
	c(H) \geq \eps n \sqrt{d(H - S)}\end{equation}
for some  set $S \subseteq V(H)$ such that $|S| \leq \frac{n}{\eps \log n}$. 
By Theorem~\ref{t:bipartite} the answer is negative. Indeed, if $s = \omega (t/\log t)$ then $d(K_{s,t}-S) \geq s/2$ for every  set $S$ as above. Therefore, if  additionally $s=o(t)$ then the bound given by Theorem~\ref{t:bipartite} is smaller than the bound in (\ref{e:HW}) by a factor of roughly $\sqrt{t/s}$.

The rest of the paper is structured as follows
In Section~\ref{s:eppstein} we introduce the lemma of Eppstein~\cite{Eppstein10}, which will serve as our main tool and prove several additional preliminary lemmas. We prove Theorem~\ref{thm:infty+} in Section~\ref{s:infty+}, and Theorem~\ref{thm:2r} in Section~\ref{s:2r}. In Section~\ref{s:lower} we prove a general lower bound on $c_{\infty}(lK_r)$ attained by a random construction and derive Theorem~\ref{thm:main} a) and~\ref{thm:lower} from this bound. In Section~\ref{s:tools} we introduce several additional tools we need for proving the upper bounds on $c_{\infty}(lK_r)$. In particular, we prove Theorem~\ref{t:bipartite}. In Section~\ref{s:main} we prove  Theorem~\ref{thm:main} a) and~\ref{thm:lower}. Section~\ref{s:conclude} contains the concluding remarks. 

\section{Blades, fans and Eppstein's lemma}\label{s:eppstein}

In this section we define blades and fans and present a lemma of 
Eppstein~\cite{Eppstein10}, which will provide the framework for proving our results. 

We say that a pair $(G,S)$ is \emph{a blade} if $G$ is a graph and $S \subsetneq V(G)$.
Given a blade $\mc{B}=(G,S)$ and a positive integer $k$, let $\Fan(\mc{B},k)$ or $\Fan(G,S,k)$ denote the graph obtained by $k$ copies of $G$ by identifying the vertices in $S$. For example, $\bar{K}_{s,t}$ can be considered as $\Fan(K_{s+1},S,t)$, where $S$ is a subset of vertices of $K_{s+1}$ of size $s$. 
It is easy to see that $$\lim_{k \to \infty} d(\Fan(G,S,k)) = \frac{e(G)-e(G[S])}{v(G)-|S|},$$
and we define the \emph{density of a blade $\mc{B}=(G,S)$} as $$d(\mc{B}) = d(G,S) = \frac{e(G)-e(G[S])}{v(G)-|S|}. $$

We say that a blade $(G,S)$  is \emph{semiregular} if \begin{itemize} 
	\item $G[S]$ is complete,
	\item $G \setminus S$ is connected,
	\item each vertex of $S$ has a neighbor in $ V(G) - S$, 
\end{itemize}
We say that a semiregular blade $\mc{B}$ is \emph{regular} if $\deg(v) \geq d(G,S)$ for every $v \in V(G\setminus S)$.

Given a graph $H$ and a blade $\mc{B}$, we say that $H$ is \emph{a minor of $\mc{B}$} if $H$ is a minor of $\Fan(\mc{B},k)$ for some $k$, and we say that $\mc{B}$  is \emph{$H$-minor free}, otherwise.

We are now ready to state the key lemma, which is proven in~\cite{Eppstein10} for general minor-closed classes of graphs.  For convenience we state only a weaker version for classes of graphs with a single excluded minor.

\begin{lem}[Eppstein~\cite{Eppstein10}]
		\label{lem:formfan}
 Let $H$ be a graph.  Then for any $\e>0$ there exists a regular $H$-minor free blade $\mc{B}$ such that $d(\mc{B}) \geq \c(H) - \e$.
\end{lem}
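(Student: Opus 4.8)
\emph{Strategy and the easy half.} Write $c:=\c(H)$, which is finite since $H$-minor-free graphs have bounded density. The plan is to produce the blade in two stages: first obtain an $H$-minor-free blade $\mc{B}$ with $d(\mc{B})\geq c-\e$, and then convert it into a \emph{regular} one without decreasing the density or destroying $H$-minor-freeness. One half of the correspondence is immediate: if $\mc{B}$ is $H$-minor-free then every $\Fan(\mc{B},k)$ is $H$-minor-free (note $\Fan(\mc{B},k')$ is a subgraph of $\Fan(\mc{B},k)$ for $k'\leq k$), so $\c(H)\geq d(\Fan(\mc{B},k))\to d(\mc{B})$, i.e.\ $d(\mc{B})\leq c$; the content is the reverse direction. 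I would first replace the universal quantifier over $k$ in the definition of ``$H$-minor-free blade'' by a single instance: \emph{if $\mc{B}=(G,S)$ is a blade with $G[S]$ complete, then $\mc{B}$ is $H$-minor-free iff $\Fan(\mc{B},k_0)$ is $H$-minor-free, where $k_0:=v(H)+e(H)$.} Indeed, given a minor model $\{B_x\}_{x\in V(H)}$ of $H$ inside some $\Fan(\mc{B},k)$, prune each branch set to a minimal valid one; since $G[S]$ is complete, $B_x\cap S$ is connected, so every connected piece of $B_x\setminus S$ lies in a single petal and hangs as a pendant off $B_x\cap S$. Minimality then forces every such ``petal excursion'' of $B_x$ to carry an edge of $H$ realised in no other petal, so at most $e(H)$ petals are used in this way; at most $v(H)$ further petals can each contain a whole branch set disjoint from $S$. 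Hence the model lives inside $S$ together with at most $k_0$ petals. So it suffices to find, inside an $H$-minor-free graph of density close to $c$, a clique $S$ and $k_0$ pairwise disjoint, pairwise non-adjacent, connected petals $P_1,\dots,P_{k_0}$, mutually isomorphic relative to $S$ and all attached to $S$ in the same way, with $\bigl(e(G[P_1])+e(P_1,S)\bigr)/|P_1|\geq c-\e$.

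\emph{The extraction (the main obstacle).} Fix $H$-minor-free graphs $G_n$ with $v(G_n)\to\infty$ and $d(G_n)\to c$. The heart of the whole argument is to show that for $n$ large $G_n$ contains such a configuration. I would exploit the fact that $H$-minor-free graphs are both sparse (bounded degeneracy) and highly structured: they admit a clique-sum / tree-of-pieces decomposition in which almost all edges lie in large pieces, each attached to the rest along a small clique. A sufficiently large dense $G_n$ must therefore contain many large pieces hanging off a common small clique $S$; a pigeonhole over the (boundedly many) isomorphism types of bounded-size truncations of these pieces yields $k_0$ mutually isomorphic petals on the common clique $S$, and an averaging argument over the pieces forces the resulting blade to have density at least $c-\e$ — otherwise one could repeatedly delete the low-density pieces together with their attachments and contradict $d(G_n)\to c$. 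Making the ``common clique'' and the density guarantee precise — essentially Eppstein's structural analysis of dense members of a minor-closed class — is the difficult step.

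\emph{Regularisation.} Given an $H$-minor-free blade $\mc{B}=(G,S)$ with $d(\mc{B})\geq c-\e$, I would reach a regular one by iterating four reductions, each preserving $H$-minor-freeness and not decreasing the density. (i) Replace $G\setminus S$ by a component $P$ maximising $\bigl(e(G[P])+e(P,S)\bigr)/|P|$, and $S$ by $N(P)$; the density does not drop, being a weighted mean of these ratios, and $\Fan$ of the new blade is a subgraph of $\Fan(\mc{B},k)$. (ii) Delete any vertex of $S$ with no neighbour outside $S$; this leaves both the density and $\Fan(\mc{B},k)$ (minus an isolated-in-$S$ vertex) unchanged. (iii) Add all missing edges inside $S$: this keeps $\mc{B}$ $H$-minor-free because in $\Fan\bigl(\mc{B},\binom{|S|}{2}+k\bigr)$ one can route, for each pair of vertices of $S$, a path through a private petal (each petal is connected and, after (ii), adjacent to all of $S$), forming a subdivision of $K_{|S|}$ on $S$; contracting it exhibits $\Fan(\mc{B}^{+},k)$ as a minor of an $H$-minor-free graph. (iv) Delete any vertex of $V(G)\setminus S$ of degree less than $d(\mc{B})$; a one-line computation shows the density strictly increases. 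Step (iv) may destroy connectivity of $G\setminus S$, so we return to (i). Since the density is bounded above and non-decreasing, strictly increasing at each application of (iv), and the vertex count strictly decreases at every other step, the process terminates, necessarily at a regular blade, whose density is still at least $c-\e$. This completes the plan; the only genuinely hard part is the extraction paragraph.
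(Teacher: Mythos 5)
There is a genuine gap, and it sits exactly where you flag it yourself: the ``extraction'' step. Note first that the paper does not prove this statement from scratch at all --- it is quoted as Eppstein's theorem, with the paper only adding the parenthetical observation that a \emph{semiregular} blade of near-extremal density (which is what \cite{Eppstein10} supplies) can be upgraded to a \emph{regular} one by choosing, among such blades, one with $d(G,S)$ maximum and, subject to that, $v(G)$ minimum: a vertex of $V(G)\setminus S$ of degree below $d(G,S)$ could then be deleted without decreasing the density, contradicting the choice. Your proposal instead tries to reprove Eppstein's structural result itself, and that part is not a proof. The claim that a dense $H$-minor-free graph admits a ``clique-sum / tree-of-pieces decomposition in which almost all edges lie in large pieces, each attached to the rest along a small clique,'' with many large pieces hanging off a \emph{common} clique, is not an available black box --- establishing a usable statement of this kind \emph{is} Eppstein's theorem. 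Moreover the pigeonhole you invoke cannot deliver what you need: the petals of the target blade are of unbounded size (they must be, since their density over $S$ has to reach $\c(H)-\e$ for arbitrarily small $\e$), so there are not ``boundedly many isomorphism types'' to pigeonhole over, and isomorphism of bounded-size truncations does not give the genuinely isomorphic, identically attached petals that your $k_0$-reduction requires. So the heart of the lemma is asserted, not proved.

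The parts you do carry out are essentially sound, and some are nicer than what the paper records. The reduction from ``$\Fan(\mc{B},k)$ is $H$-minor-free for all $k$'' to the single value $k_0=v(H)+e(H)$ (when $G[S]$ is a clique) is correct: after pruning a model, each excursion of a branch set into a petal is pinned to an edge of $H$, two excursions pinned to the same edge must lie in the same petal, and branch sets avoiding $S$ account for at most $v(H)$ further petals. Your regularisation loop (restrict to a best component and its neighbourhood, delete $S$-vertices with no outside neighbour, complete $G[S]$ by routing each missing pair through a private petal and contracting, delete low-degree vertices outside $S$) is a correct, more explicit alternative to the paper's one-line extremal choice, though the termination bookkeeping should be stated more carefully (steps (ii)--(iii) need not reduce the vertex count; rather, every step that changes the blade either deletes a vertex or adds an edge inside the bounded set $S$). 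But none of this substitutes for the missing existence statement: without Eppstein's lemma, or a genuine proof of your extraction paragraph, the argument does not get off the ground.
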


(In~\cite{Eppstein10}, it is only shown that a semiregular blade as above exists.  However, it is easy to that if a blade $(G,S)$ satisfies the conclusion of the lemma is chosen so that $d(G,S)$ is maximum and subject to that $v(G)$ is minimum, then $(G,S)$ is regular.)

Essentially, Lemma~\ref{lem:formfan} allows us to restrict our attention to  fans when proving upper bounds on $c_{\infty}(H)$. The following  convenient corollary is immediately implied by Lemma~\ref{lem:formfan}.

\begin{cor}\label{c:fanden}
Let $H$ be a graph, and let $c\in \bb{R}$ be such that $d(\mc{B}) \leq c$ for every regular  $H$-minor free blade $\mc{B}$. Then $c_{\infty}(H) \leq c$. 

Conversely, if $\mc{B}$ is an $H$-minor-free blade then $d(\mc{B}) \leq c_{\infty}(H).$ 
\end{cor}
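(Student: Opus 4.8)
The plan is to prove the two assertions separately, in each case simply unwinding definitions; the entire substance of the corollary is carried by Lemma~\ref{lem:formfan} and by the fact, recorded just before the definition of $d(\mc{B})$, that $d(\Fan(\mc{B},k))$ converges to $d(\mc{B})$.

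For the forward direction, I would fix an arbitrary $\e > 0$ and apply Lemma~\ref{lem:formfan} to obtain a regular $H$-minor free blade $\mc{B}$ with $d(\mc{B}) \geq \c(H) - \e$. By the hypothesis on $c$ we have $d(\mc{B}) \leq c$, hence $\c(H) \leq c + \e$. Since $\e > 0$ was arbitrary, $\c(H) \leq c$.

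For the converse, let $\mc{B} = (G,S)$ be an $H$-minor-free blade, and for each positive integer $k$ set $G_k = \Fan(\mc{B},k)$. By the definition of an $H$-minor-free blade, none of the graphs $G_k$ contains $H$ as a minor. Moreover $v(G_k) = k\bigl(v(G) - |S|\bigr) + |S|$, and since $S \subsetneq V(G)$ we have $v(G) - |S| \geq 1$, so $v(G_k) \to \infty$ as $k \to \infty$. Consequently $d(G_k) \leq \max_{v(G') = v(G_k)} d(G')$ for every $k$, and letting $k \to \infty$ the right-hand side tends to $\c(H)$: here one passes to the subsequence of values $n = v(G_k)$ of the sequence defining $\c(H)$, which is legitimate since that sequence converges and $v(G_k)\to\infty$. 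Because $\lim_{k \to \infty} d(G_k) = d(\mc{B})$, we conclude $d(\mc{B}) \leq \c(H)$.

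I do not anticipate a genuine obstacle; the only mild point of care is the passage to the subsequence $n = v(G_k)$ in the limit defining $\c(H)$, which is harmless because the full sequence converges and $v(G_k) \to \infty$. Everything else is a direct application of Lemma~\ref{lem:formfan} and the limiting description of blade density.
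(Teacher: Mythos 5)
Your proof is correct and matches what the paper intends: the forward direction is exactly the $\e$-argument from Lemma~\ref{lem:formfan}, and the converse is the observation that every $\Fan(\mc{B},k)$ is $H$-minor free with $v(\Fan(\mc{B},k))\to\infty$ and $d(\Fan(\mc{B},k))\to d(\mc{B})$, which the paper treats as immediate. Your extra care about passing to the subsequence $n=v(\Fan(\mc{B},k))$ is legitimate and harmless, since the limit defining $c_{\infty}(H)$ is assumed to exist.
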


We finish this section by introducing additional notation and several easy, but useful, lemmas. Let $\mc{B}=(G,S)$ be a blade. For $S' \subseteq S$, we denote by $\mc{B}[S']$ the blade $(G \setminus (S-S'),S')$ obtained from $\mc{B}$ by deleting vertices in $S - S'$. 

\begin{lem}\label{l:bladedensity} Let $\mc{B}=(G,S)$ be a blade, and let $S' \subseteq S$. Then $d(\mc{B}[S']) \geq d(\mc{B}) - |S|+|S'|$
\end{lem}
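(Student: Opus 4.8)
The plan is to delete the vertices of $S \sm S'$ one at a time and track how the density changes at each step. Write $\mc{B} = (G,S)$ with $d(\mc{B}) = \frac{e(G) - e(G[S])}{v(G) - |S|}$, and note that the denominator $v(G) - |S| = v(G \sm (S - S')) - |S'|$ is unchanged throughout the deletion process, since we only ever remove vertices of $S$. So it suffices to control the numerator: deleting a single vertex $u \in S$ from the blade $(G, S)$ to form $(G \sm u, S \sm u)$ changes the numerator $e(G) - e(G[S])$ by exactly $-\bigl(\deg_G(u) - \deg_{G[S]}(u)\bigr)$, i.e. it decreases by the number of edges from $u$ to $V(G) \sm S$.

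The key observation is that this loss is at most the number of vertices in $V(G) \sm S$, and in fact we get a cleaner bound by another route: since $u \in S$, we have $\deg_G(u) - \deg_{G[S]}(u) \le v(G) - |S|$ (the number of vertices outside $S$). Hence after removing $u$, the new numerator is at least $\bigl(e(G) - e(G[S])\bigr) - (v(G) - |S|)$, so dividing by the unchanged denominator $v(G) - |S|$, the density drops by at most $1$ per vertex removed. Removing all $|S| - |S'|$ vertices of $S \sm S'$ in succession therefore gives $d(\mc{B}[S']) \ge d(\mc{B}) - (|S| - |S'|)$, which is precisely the claimed inequality.

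There is essentially no obstacle here; the only thing to be slightly careful about is that the denominator stays positive and constant, which is exactly why we delete vertices of $S$ rather than arbitrary vertices — deleting a vertex of $V(G)\sm S$ would change $v(G) - |S|$ and the argument would not go through. One should also check the degenerate edge case where $S' = S$, in which the statement is trivial, and note that the inequality $\deg_G(u) - \deg_{G[S]}(u) \le v(G) - |S|$ holds for every $u \in S$ regardless of the blade's structure, so no semiregularity or regularity hypothesis is needed. I would present this as a one-line induction on $|S| - |S'|$, with the single-vertex-deletion estimate above as the inductive step.
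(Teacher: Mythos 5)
Your proof is correct and is essentially the paper's own argument: the paper bounds the total numerator loss in one step via $(e(G)-e(G[S]))-(e(G - (S - S'))-e(G[S'])) \leq (|S|-|S'|)(v(G)-|S|)$ and divides by the unchanged denominator $v(G)-|S|$, while you obtain exactly the same bound by deleting the vertices of $S \setminus S'$ one at a time. The per-vertex estimate and the observation that the denominator stays fixed are the same ingredients, so this is just an inductive rephrasing of the paper's one-line computation.
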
	
\begin{proof}
We have $$
(e(G)-e(G[S]))-(e(G - (S - S'))-e(G[S'])) \leq  (|S|-|S'|)(v(G)-|S|),
$$	
implying the desired inequality by definition of the blade density. 
\end{proof}

\begin{lem}\label{l:bladetau} Let $(G,S)$ be a semiregular blade, and let $H$ be a graph. If $|S| \geq \tau(H)$ then $H$ is a minor of $G$.
\end{lem}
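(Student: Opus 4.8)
The plan is to exhibit an $H$-model inside $\Fan(G,S,k)$ for a suitable $k$, which by definition means that $H$ is a minor of the blade $\mc{B}=(G,S)$. Write $t=\tau(H)$ and fix a vertex cover $X=\{x_1,\dots,x_t\}$ of $H$, so that $I:=V(H)\setminus X$ is an independent set; label $I=\{y_1,\dots,y_m\}$ with $m=v(H)-t$. By hypothesis $|S|\geq t$, so I may choose distinct vertices $s_1,\dots,s_t\in S$. I will work in $\Fan(G,S,m)$, whose vertices are the identified copy of $S$ together with $m$ pairwise disjoint copies $C_1,\dots,C_m$ of $V(G)\setminus S$, where for each $l$ the set $C_l$ together with $S$ and the edges of $G$ between them is the $l$-th copy of $G$.

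The model is as follows. To $x_j$ I assign the singleton branch set $\{s_j\}$; since $G[S]$ is complete, $\{s_1,\dots,s_t\}$ induces a clique in $\Fan(G,S,m)$, so every edge of $H$ with both ends in $X$ is realised between the corresponding branch sets. To $y_l$ I assign the branch set $C_l$; these sets are pairwise disjoint and disjoint from $S$, and each induces a connected subgraph because $G\setminus S$ is connected. There is no edge of the fan between distinct $C_l$, but this is harmless since $I$ is independent. Finally, for an edge $x_jy_l\in E(H)$ I need an edge of $\Fan(G,S,m)$ joining $s_j$ to $C_l$: by the third clause in the definition of a semiregular blade, $s_j$ has a neighbour $w_j\in V(G)\setminus S$, and the copy of $w_j$ lying in $C_l$ is adjacent to $s_j$ in the $l$-th copy of $G$, supplying the required edge. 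Extra edges between branch sets of non-adjacent vertices of $H$ are irrelevant for a minor, so no further verification is needed, and $(\{s_j\})_{j\le t}$ together with $(C_l)_{l\le m}$ is an $H$-model in $\Fan(G,S,m)$; hence $H$ is a minor of $\mc{B}$.

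I do not expect a genuine obstacle. The one point that needs care is that a single branch set $C_l$ must simultaneously reach every $s_j$ with $x_jy_l\in E(H)$; this is exactly what separating the images of the vertices of $I$ into distinct copies of $G$ inside the fan achieves, since each $C_l$ is all of $V(G)\setminus S$ in its copy and therefore meets every $w_j$, and it is here that the connectedness of $G\setminus S$ and the third clause of semiregularity are used, while completeness of $G[S]$ handles the edges of $H$ inside $X$. The number of copies required, $m=v(H)-\tau(H)$, places no constraint because a blade may be fanned out to arbitrary size, so nothing further is needed.
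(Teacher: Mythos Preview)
Your proof is correct and follows essentially the same approach as the paper. The paper's version is just a more compressed phrasing of the same idea: it observes that $\bar K_{|S|,k}$ is a minor of the blade $(G,S)$ for every $k$ (which is exactly your model with the $s_j$'s as the clique part and the $C_l$'s as the independent part), and that $H$ is a subgraph of $\bar K_{\tau(H),\,v(H)-\tau(H)}$; your argument unfolds this into an explicit model. One trivial edge case you might mention for completeness is $m=0$, where $H\subseteq K_t\subseteq G[S]$ and one can take $k=1$ instead of $k=m$.
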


\begin{proof}
	Note that  $\bar{K}_{|S|,k}$  is a minor of $(G,S)$ for every $k$. On the other hand $H$ is isomorphic to a subgraph of $\bar{K}_{\tau(H),v(H) - \tau(H)}$. The desired conclusion follows.
\end{proof}	

Showing that a graph $G$ contains a graph $H$ as a minor typically involves constructing a model of $H$ in $G$, defined as follows. We say that a map $\mu$ is a \emph{blueprint of $H$ in $G$} if $\mu$ maps vertices of $H$ to disjoint subsets of vertices of $G$, called \emph{bags of $\mu$}. We will use $\mu(H)$ to denote $\cup_{v \in V(H)}\mu(v)$.

We say that a blueprint is a \emph{premodel} if for every edge $\{u,v\} \in E(H)$ there exists an edge of $G$ with one end in $\mu(u)$  and another in $\mu(v)$. Finally, we say that a premodel is a \emph{model} if $G[\mu(v)]$ is connected for every $u \in V(H)$. The following useful observation is well known.

\begin{obs}\label{o:model} A graph $H$ is a minor of a graph $G$ if and only if there exists a model of $H$ in $G$.
\end{obs}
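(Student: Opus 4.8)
The plan is to verify both implications directly from the definitions; since the statement is standard, this amounts to careful bookkeeping rather than any real idea.

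For the ``if'' direction, suppose $\mu$ is a model of $H$ in $G$. For each $v \in V(H)$ I would pick a spanning tree $T_v$ of the connected graph $G[\mu(v)]$, and for each edge $\{u,v\} \in E(H)$ pick, using that $\mu$ is a premodel, an edge $f_{uv}$ of $G$ with one end in $\mu(u)$ and one in $\mu(v)$. Let $G_0$ be the subgraph of $G$ with vertex set $\mu(H)$ and edge set $\bigcup_{v \in V(H)} E(T_v) \cup \{f_{uv} : \{u,v\}\in E(H)\}$. Contracting each $T_v$ to a single vertex turns $G_0$ into a graph on vertex set $V(H)$ in which the image of $f_{uv}$ joins the vertices arising from $\mu(u)$ and $\mu(v)$; deleting any loops or parallel edges this creates then leaves a graph isomorphic to $H$, so $H$ is a minor of $G$.

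Conversely, suppose $H$ arises from a subgraph $G_0$ of $G$ by contracting a set $F \subseteq E(G_0)$ of edges (discarding the loops and parallel edges this may produce). The components of the spanning subgraph $(V(G_0),F)$ partition $V(G_0)$ into classes corresponding bijectively to $V(H)$; let $\mu(v)$ be the class corresponding to $v$. Then the bags $\mu(v)$ are disjoint subsets of $V(G)$, and each $G[\mu(v)] \supseteq G_0[\mu(v)]$ is connected because $\mu(v)$ is already connected inside $G_0$ via the edges of $F$ it contains. For every $\{u,v\} \in E(H)$ the corresponding edge of the contraction comes from an edge of $G_0$, hence of $G$, with one end in $\mu(u)$ and the other in $\mu(v)$; thus $\mu$ is a premodel and therefore a model of $H$ in $G$.

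There is no genuine obstacle: the only subtlety is to keep track of the loops and parallel edges produced by contraction (which the definition of minor permits one to delete) and to use the definition of ``minor'' as contraction within a subgraph exactly as stated.
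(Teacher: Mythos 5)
Your proof is correct: both directions are the standard argument (contracting spanning trees of the bags for one direction, reading the bags off the components of the contracted edge set for the other), and the bookkeeping about loops and parallel edges is handled properly. The paper itself offers no proof, stating the observation as well known, so there is nothing further to compare against.
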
	

Observation~\ref{o:model} is used, in particular, in the proofs of the next remaining lemmas of this section.

\begin{lem}\label{l:subblade} Let $\mc{B}=(G,S)$ be a blade, let $H_1, H_2, \ldots, H_t$ be vertex disjoint graphs, and let $H$ be their union. 
	Then the following  are equivalent
	\begin{enumerate}
		\item $H$ is a minor of $\mc{B}$, and
		\item there exist disjoint  $S_1,\ldots, S_t \subseteq S$ such that  $H_i$ is a minor of $\mc{B}[S_i]$ for every $1 \leq i \leq t$.
	\end{enumerate}
\end{lem}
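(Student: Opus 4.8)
The plan is to prove the two implications by passing between a model of $H$ in one of the graphs $\Fan(\mc{B},k)$ and a family of models of the individual $H_i$ in graphs of the form $\Fan(\mc{B}[S_i],k_i)$, using Observation~\ref{o:model} throughout. Everything rests on one structural remark about fans. Write $\Fan(G,S,k)$ as the union of $k$ copies $G^{(1)},\dots,G^{(k)}$ of $G$ that are pairwise disjoint outside the common set $S$. Then (i) for $S'\subseteq S$, deleting $S\sm S'$ from $\Fan(G,S,k)$ produces exactly $\Fan(G\sm(S\sm S'),S',k)=\Fan(\mc{B}[S'],k)$; and (ii) for any partition of $\{1,\dots,k\}$ into blocks $I_1,\dots,I_t$ and any $S_1,\dots,S_t\subseteq S$, the subgraph of $\Fan(G,S,k)$ induced on $X_i:=\bigl(\bigcup_{j\in I_i}(V(G^{(j)})\sm S)\bigr)\cup S_i$ is isomorphic to $\Fan(\mc{B}[S_i],|I_i|)$, and $X_1,\dots,X_t$ are pairwise disjoint whenever $S_1,\dots,S_t$ are (since the ``private'' sets $V(G^{(j)})\sm S$ lie outside $S$, disjointness of the $S_i$ is all that is needed).

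For $(1)\Rightarrow(2)$ I would fix $k$ and a model $\mu$ of $H$ in $\Fan(\mc{B},k)=\Fan(G,S,k)$, and set $S_i:=\bigl(\bigcup_{v\in V(H_i)}\mu(v)\bigr)\cap S$. As the bags of $\mu$ are pairwise disjoint and $V(H_1),\dots,V(H_t)$ partition $V(H)$, the sets $S_1,\dots,S_t$ are pairwise disjoint subsets of $S$. Restricting $\mu$ to $V(H_i)$ gives a model $\mu_i$ of $H_i$ whose bags avoid $S\sm S_i$; by remark (i) these bags lie inside $\Fan(G,S,k)-(S\sm S_i)=\Fan(\mc{B}[S_i],k)$, so $\mu_i$ witnesses that $H_i$ is a minor of $\mc{B}[S_i]$.

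For $(2)\Rightarrow(1)$, given pairwise disjoint $S_1,\dots,S_t\subseteq S$ with $H_i$ a minor of $\mc{B}[S_i]$, I would pick $k_i$ with $H_i$ a minor of $\Fan(\mc{B}[S_i],k_i)$, set $k:=k_1+\dots+k_t$, and split the $k$ copies of $G$ in $\Fan(G,S,k)$ into blocks $I_1,\dots,I_t$ of sizes $k_1,\dots,k_t$. By remark (ii) the set $X_i$ induces a copy of $\Fan(\mc{B}[S_i],k_i)$ and hence carries a model $\mu_i$ of $H_i$; since $X_1,\dots,X_t$ are pairwise disjoint, $\bigcup_i\mu_i$ has pairwise disjoint, connected bags, and it realizes every edge of $H$ because each such edge lies in a single $H_i$ and is realized by $\mu_i$. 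By Observation~\ref{o:model}, $H$ is a minor of $\Fan(\mc{B},k)$, i.e.\ of $\mc{B}$.

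I do not expect a genuine obstacle; both directions are an unravelling of the definition of a fan. The one point that needs care is remark (ii): one checks that the induced subgraph on $X_i$ has no stray edges between distinct copies (there are none, since distinct copies meet only in $S$ and $X_i\cap S=S_i$), and that the blocks' vertex sets are disjoint precisely because each copy's private vertices avoid $S$, so that no compatibility condition beyond the disjointness of the $S_i$ is required.
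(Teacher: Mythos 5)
Your argument is correct and is essentially the paper's own proof: for $(1)\Rightarrow(2)$ you restrict a model of $H$ in some $\Fan(\mc{B},k)$ to the components and take $S_i=\mu_i(H_i)\cap S$, and for $(2)\Rightarrow(1)$ you reassemble models of the $H_i$ inside disjoint blocks of copies of $G$ in a larger fan — exactly the ``similar'' reverse direction the paper leaves implicit, with the routine details (your remarks (i) and (ii)) filled in correctly.
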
	

\begin{proof}
We start by showing that the first condition implies the second. By Observation~\ref{o:model}, there exists a model $\mu$ of $H$ in $\Fan(G,S,k)$ for some positive integer $k$. Equivalently there exists models $\mu_i$ of $H_i$  in $\Fan(G,S,k)$ for $1 \leq i \leq t$ such that $\mu_i(H_i) \cap \mu_j(H_j) = \emptyset$ for all $i \neq j$. Let $S_i = \mu_i(H_i) \cap S$ then the second condition clearly holds.

The proof of the other implication is similar.  
\end{proof}	

\begin{lem}\label{l:minorcomplete} Let $\mc{B}=(G,S)$ be a semiregular blade. If $K_r$ is a minor of $\mc{B}$
then $K_r$ is a minor of $G$.	 
\end{lem}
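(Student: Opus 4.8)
The plan is to take a model of $K_r$ in $\Fan(G,S,k)$ for some $k$, argue that it essentially lives inside a single copy of $G$, and patch it into a model of $K_r$ in $G$ itself, exploiting that $G[S]$ is a clique. By Lemma~\ref{l:bladetau} we may assume $|S| < \tau(K_r) = r-1$, since otherwise $K_r$ is already a minor of $G$. So suppose $K_r$ is a minor of $\mc{B}$ and fix, via Observation~\ref{o:model}, a model $\mu$ of $K_r$ in $\Fan(G,S,k)$; let $G_1,\dots,G_k$ denote the copies of $G$ forming the fan. Let $R$ be the set of vertices $v$ of $K_r$ with $\mu(v)\cap S\neq\emptyset$. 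As the bags of $\mu$ are pairwise disjoint, $|R|\le|S|<r$, so at least one bag avoids $S$. Since $\Fan(G,S,k)$ has no edge between $V(G_i)\setminus S$ and $V(G_j)\setminus S$ for $i\neq j$, any bag avoiding $S$ (being connected) is contained in a single $V(G_i)\setminus S$; and since $K_r$ is complete, any two such bags are joined by an edge of the fan, forcing them into the same copy. Thus all bags avoiding $S$ lie in a common copy, say $G_1$.

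Next I would define $\nu$ on $V(K_r)$ by $\nu(v)=\mu(v)$ for $v\notin R$ (already inside $V(G_1)\setminus S$) and $\nu(v)=\mu(v)\cap V(G_1)$ for $v\in R$; these sets are non-empty and pairwise disjoint. It remains to check that $\nu$ is a model of $K_r$ in $G_1\cong G$. For connectivity of $\nu(v)$ with $v\in R$: $\mu(v)\cap S$ is a non-empty clique of $G_1$, and every connected component of $\mu(v)\cap(V(G_1)\setminus S)$ has a neighbour in $\mu(v)\cap S$ --- indeed, following a path inside the connected set $\mu(v)$ from such a component, the first vertex leaving the component cannot lie in any $V(G_j)\setminus S$ and hence lies in $S$. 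For the edges: given $\{v,w\}\in E(K_r)$, if $v,w\in R$ then $S$ being a clique of $G_1$ supplies an edge between $\nu(v)$ and $\nu(w)$; if $w\notin R$ then the fan-edge witnessing $\{v,w\}$ has an endpoint in $\mu(w)\subseteq V(G_1)\setminus S$, so its other endpoint lies in $V(G_1)\cap\mu(v)=\nu(v)$; and if $v,w\notin R$ the witnessing edge already lies in $G_1$. Observation~\ref{o:model} then gives $K_r$ as a minor of $G$.

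The only step carrying real content is the connectivity check for bags $\nu(v)$ with $v\in R$ --- the claim that the part of such a bag in $V(G_1)\setminus S$ attaches to its part in $S$; everything else follows immediately from the product-like structure of the fan and the completeness of $G[S]$. (Note that of the three conditions defining a semiregular blade, only the completeness of $G[S]$ is actually needed here.)
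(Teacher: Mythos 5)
Your proof is correct and follows essentially the same route as the paper's: since $|S|<r$ some bag avoids $S$, such bags all live in a single copy of $G$, and restricting the model to that copy (using that $G[S]$ is a clique to keep the truncated bags connected and mutually adjacent) yields a model of $K_r$ in $G$ --- the paper simply compresses your connectivity and edge checks into ``it is easy to see.'' One minor quibble: your closing parenthetical is slightly off, because your reduction to $|S|<r-1$ via Lemma~\ref{l:bladetau} does use the other semiregularity conditions (the paper instead handles $|S|\geq r$ directly via $K_r\subseteq G[S]$, which indeed needs only completeness).
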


\begin{proof} 
	Let $\mu$ be a model of $K_r$ in $\Fan(G,S,k)$ for some positive integer $k$. If $|S| \geq r$ then $K_r$ is a subgraph of $G[S]$ and so the lemma holds. Otherwise, there exists a $v \in V(K_r)$ such that $\mu(v) \subseteq V(G') \setminus S$ for some copy $G'$ of $G$ in  $\Fan(G,S,k)$. Then $\mu(u) \cap V(G') \neq \emptyset$  for every $u \in V(K_r)$, and it is easy to see that the restriction of $\mu$ to $V(G')$ is a model of $K_r$ in $G'$. 
\end{proof}

\section{Proof of Theorem~\ref{thm:infty+}}\label{s:infty+}
Let $c= \max\{\c(H_2), c_{\infty}(H_1) + \tau(H_2)\}$. By Corollary~\ref{c:fanden} it suffices to show that $d(\mc{B}) \leq c$ for every  $H$-minor free regular blade $\mc{B}=(G,S)$.

We number the vertices in $S=\{v_1,v_2,\dots,v_s\}$, where $s=|S|$. Let $S_i=\{v_1,\dots,v_i\}$, $\overline{S_i}=S-S_i$. Choose $i$ minimum such that $H_1$ is a minor of $\mc{B}[S_i]$. Thus  $\mc{B}[\overline{S_i}]$ is $H_2$ minor-free by Lemma~\ref{l:subblade}, and therefore $d(\mc{B}[\overline{S_i}]) \leq \c(H_2)$ by Corollary~\ref{c:fanden}.  
In particular, if $i=0$ then $d(G,S) \leq c_{\infty}(H_2) \leq c$, as desired. Thus we assume $i>0$. By Lemma~\ref{l:bladetau} we have  \begin{equation}\label{e:h2}
s-i \leq \tau(H_2)-1.
\end{equation}
By minimality of $i$, $\mc{B}[S_{i-1}]$  is $H_1$-minor-free. Therefore $\c(H_1) \geq d(\mc{B}[S_{i-1}])$. By Lemma~\ref{l:bladedensity} and (\ref{e:h2}), we have  $$d(G,S) \leq d(\mc{B}[S_{i-1}]) + s-i+1 \leq \c(H_1) +  \tau(H_2),$$ 
as desired.

\section{Proof of Theorem~\ref{thm:2r}}\label{s:2r}
A classical result of Erd\H{o}s and Gallai below implies that 
\begin{equation}\label{e:singlecycle}
\c(C_l) \leq \frac{l-1}2
\end{equation}
 for every  $l \geq 3$.
\begin{thm}[Erd\H{o}s and Gallai~\cite{ErdGal59}]\label{thm:ErdGal}
	Let $l \geq 3$ be an integer and let $G$ be a graph with $n$ vertices and more than $(l-1)(n-1)/2$ edges. Then $G$ contains a cycle of length at least $l$. 
\end{thm}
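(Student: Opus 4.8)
The plan is to prove the contrapositive by induction on $n$: assuming $G$ contains no cycle of length at least $l$, I will show $e(G) \le (l-1)(n-1)/2$. The base cases are immediate, since if $n \le l-1$ then $e(G) \le \binom{n}{2} = n(n-1)/2 \le (l-1)(n-1)/2$. For the inductive step I work with a \emph{minimal counterexample} $G$ --- a graph with no cycle of length $\ge l$ satisfying $e(G) > (l-1)(v(G)-1)/2$ with $v(G)$ as small as possible --- and rule out every structural feature it could possess.

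First I reduce to the $2$-connected case. If $G$ is disconnected, or is connected with a cut vertex, I write $G = G_1 \cup G_2$ where $G_1,G_2$ are proper induced subgraphs meeting in at most one vertex; then $e(G) = e(G_1) + e(G_2)$, both $G_i$ contain no cycle of length $\ge l$, and $v(G_1) + v(G_2) \le n+1$, whence $\frac{(l-1)(v(G_1)-1)}{2} + \frac{(l-1)(v(G_2)-1)}{2} \le \frac{(l-1)(n-1)}{2} < e(G)$, so some $G_i$ satisfies $e(G_i) > \frac{(l-1)(v(G_i)-1)}{2}$ and is a smaller counterexample. Hence $G$ is $2$-connected. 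Next I reduce the minimum degree: if some vertex $v$ has $\deg(v) \le (l-1)/2$ then $G-v$ contains no cycle of length $\ge l$ and $e(G-v) = e(G) - \deg(v) > \frac{(l-1)(n-1)}{2} - \frac{l-1}{2} = \frac{(l-1)(v(G-v)-1)}{2}$, again a smaller counterexample. Hence $\deg(v) > (l-1)/2$ for all $v$, and since degrees are integers this forces $\delta(G) \ge \lceil l/2 \rceil$. Together with the base case --- which shows every counterexample has $n \ge l$ --- the minimal counterexample $G$ is $2$-connected, satisfies $\delta(G) \ge \lceil l/2 \rceil$, and has $n \ge l$.

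To finish, I invoke the classical theorem of Dirac that every $2$-connected graph $G$ contains a cycle of length at least the smaller of $v(G)$ and $2\delta(G)$. Here this produces a cycle of length at least the smaller of $n$ and $2\lceil l/2\rceil$; since $2\lceil l/2\rceil \ge l$ and $n \ge l$, that length is at least $l$, contradicting the choice of $G$. Hence no counterexample exists and the theorem follows. If one wants a self-contained argument, the appeal to Dirac can be replaced by its standard rotation/detour proof: argue about a longest cycle $C$, and when its length falls below the smaller of $n$ and $2\delta(G)$ use $2$-connectivity to find a ``detour'' --- a path meeting $C$ exactly in its two endpoints and otherwise running through vertices off $C$ --- which splices into $C$ to yield a longer cycle after a short case analysis on the two arcs of $C$ it cuts off and on the degrees along the detour.

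The step I expect to be the main obstacle is exactly Dirac's $2$-connected circumference bound: the reductions above are routine, but upgrading ``$\delta(G)$ is large'' to ``$G$ has a cycle of length about $2\delta(G)$'' genuinely relies on $2$-connectivity --- it cannot be read off a single longest path, whose length can be as small as roughly $\delta(G)$ --- and needs the detour argument sketched above. A secondary point to get right is the parity bookkeeping: one should check that $\deg(v) > (l-1)/2$ really forces $\delta(G) \ge \lceil l/2\rceil$ for both parities of $l$, and that $2\lceil l/2\rceil \ge l$ always holds, since this is what promotes the final bound from $l-1$ to the required $l$.
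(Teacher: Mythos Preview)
Your proof is correct: the reduction to a $2$-connected minimal counterexample with $\delta(G)\ge\lceil l/2\rceil$ and $n\ge l$ is clean, and Dirac's circumference bound $\mathrm{circ}(G)\ge\min\{v(G),2\delta(G)\}$ for $2$-connected graphs then finishes it immediately. The parity check is fine as well.

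There is no proof in the paper to compare against: Theorem~\ref{thm:ErdGal} is quoted from~\cite{ErdGal59} as a classical result and used only to derive the single-cycle bound $\c(C_l)\le(l-1)/2$ at the start of Section~\ref{s:2r}. Your argument is one of the standard proofs of the Erd\H{o}s--Gallai theorem.
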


We prove  Theorem~\ref{thm:2r} by induction on $v(H)$. By (\ref{e:singlecycle}) we may assume that
 $H$ as at least $2$ components. 
Let
\[
  d_0=\begin{cases}
               2m-\frac{1}{2}, \text{ if $H=mC_4$;}\\
               \frac{v(H)+\odd(H)}{2}-1, \text{ otherwise.}
            \end{cases}
\] By Corollary~\ref{c:fanden} it suffices to show that $d(G,S) \leq d_0$ for every $H$-minor-free regular blade $\mc{B}=(G,S)$.
Let $C$ be the longest cycle in $H$, let $l = v(C)$, and let $H$ be the disjoint union of $C$ and a graph $H_1$.  

If $H_1$ is a minor of $G \sm S$ then $(G,S)$ is $C$-minor-free, and so by (\ref{e:singlecycle}) we have $d(G,S) \leq \frac{(l-1)}{2} \leq d_0$, as desired. Thus $H_1$ is not a minor of $G \sm S$, and by the induction hypothesis  
$$d(G\sm S)\leq\frac{v(H_1)+\odd(H_1)}{2}-\frac{1}{2}.$$
Suppose that $|S| \leq (l-1)/2$, then 
\begin{align*} %\label{eq:1}
d(G,S)&\leq |S|+ d(G\sm S) \leq |S|+ \frac{v(H_1)+\odd(H_1)}{2}-\frac{1}{2} \\ &\leq  \frac{v(H_1)+v(C)+\odd(H_1)}{2}-1 \leq \frac{v(H)+\odd(H)}{2}-1 \leq d_0.
\end{align*}
Thus we assume that $|S| \geq l/2$.

Suppose next that there exists $v \in V(G) - S$ such that $v$ is the only vertex in $V(G)-S$ adjacent to a vertex in $S$.  Then $e(G) - e(G[S]) \leq e(G \setminus S) +|S|$. If $|S| \geq |V(G)-S|$ then
 \begin{align*} %\label{eq:1}
d(G,S)&\leq  d(G\sm S) + \frac{|S|}{v(G)-|S|} \\ &\leq \frac{v(G)-|S|-1}{2} +\frac{|S|}{v(G)-|S|}  \\ &\leq  |S| \leq \tau(H)-1 \leq d_0,
\end{align*}
where second to last inequality uses Lemma~\ref{l:bladetau}.
Otherwise,
 \begin{align*} %\label{eq:1}
 d(G,S)&\leq  d(G\sm S) + \frac{|S|}{v(G)-|S|} \leq d(G\sm S) + 1 \\& \leq  \frac{v(H_1)+\odd(H_1)}{2}+\frac{1}{2}  \leq \frac{v(H)+\odd(H)}{2}-1 \leq d_0.
 \end{align*}
Thus we assume that there exists distinct $u_1,u_2 \in S$, $v_1,v_2\in G\sm S$ such that $u_1v_1,u_2v_2\in E(G)$.

Let $S' \subseteq S$ be such that $u_1,u_2 \in S'$, and let $k=|S'|$.  We show that $C_{2k+2}$ is a minor
$\mc{B}[S']$. Let $S'=\{u_1,u_2,\ldots,u_k\}$. We say that a path $P$ in $G$ is an \emph{$S'$-jump} if both ends of $P$ are in $S'$, and $P$ is otherwise disjoint from $S$.
By taking a path joining $v_1$ and $v_2$ in $G \sm S$ we obtain an $S'$-jump $P_1$ with ends $u_1$ and $u_2$ and at least $3$ edges. If $k=2$, then taking the union of two copies of $P_1$ in $\Fan(\mc{B}[S'],2)$ we obtain a cycle of length at least six, as desired. Thus we assume $k \geq 3$. Let $v_3$ be a neighbor of $u_3$ in $V(G) - S$, and assume without loss of generality that $v_3 \neq v_2$. Let $P_2$ be an  $S'$-jump of length at least three with ends $u_2$ and $u_3$. For $i=3,\ldots,k$, let $P_i$ be an $S'$-jump of length at least two with ends $u_i$ and $u_{i+1}$, where  $u_{k+1}=u_1$ by convention. By taking the union of copies of paths $P_1,\ldots,P_k$, each chosen from a separate copy of $G$ we obtain a cycle of length at least $2k+2$ in $\Fan(\mc{B}[S'],k)$, as desired.

We finish the proof by considering two cases. Suppose first that $H=mC_4$, and let $S'$ with $|S'|=2$ be as in the previous paragraph. Then $H_1=(m-1)C_4$ is not a minor of $\mc{B}[S-S']$ 
and therefore by Corollary~\ref{c:fanden}, Lemma~\ref{l:bladedensity} and the induction hypothesis we have
\begin{equation*} 
d(\mc{B})  \leq \c(H_1)+|S'| 
 \leq 2(m-1)-\frac{1}{2} +2  = d_0.
\end{equation*}
Thus we assume that at least one cycle in $H$ has length not equal to four.
If $l\geq5$, then by the claim above there exists $S' \subseteq S$ such that $|S'| \leq \lceil l/2\rceil -1 = (v(C)+\odd(C))/2 -1$, and $C$ is a minor of $\mc{B}[S']$. Again it follows that 
\begin{align*} 
d(\mc{B})  &\leq \c(H_1)+|S'| \leq\frac{v(H_1)+\odd(H_1)}{2}-\frac{1}{2} +|S'| \\ & 
\leq\frac{v(H)+\odd(H)}{2}-\frac{3}{2} < d_0.
\end{align*}
It remains to consider the case $l \leq 4$, but $H$ contains at least one cycle of length not equal  to four. It follows that $\c(H_1)\leq\frac{v(H_1)+\odd(H_1)}{2}-1$ by the induction hypothesis, and choosing $S' \subseteq S$ with $|S'|=2$, we once again have
\begin{equation*} 
d(\mc{B})  \leq \c(H_1)+|S'| \leq\frac{v(H_1)+\odd(H_1)}{2} +1 \leq\frac{v(H)+\odd(H)}{2}-1 =d_0,
\end{equation*}
finishing the proof.

\section{A lower bound on $\c(lK_r)$}\label{s:lower}

Our constructions of dense blades with no $lK_r$ minor are random. Let ${\bf G}(a,b,p,q)$ be a random graph, with $V({\bf G}(a,b,p,q))=A \cup B$, where $A$ and $B$ are disjoint sets with $|A|=a$, $|B|=b$, the vertices of $B$ form a clique and the edges are chosen independently at random so that every edge with both ends in $A$ is present with probability $p$ and an edge joining a vertex in $A$ to a vertex in $B$ is present with probability $q$.

The next lemma is a technical variation of a computation which to the best of our knowledge was first used by Bollobas, Caitlin and Erd\H{o}s~\cite{BCE80} to compute the size of the largest minor in a random graph. 

\begin{lem}\label{l:construction} Let positive integers $a,b$ and $r$, and  reals $\alpha,\beta >0$ be such that 	$a+b \leq r^2$, $r \leq 2b$ and\begin{equation}
  \alpha(r-b)b(\log (r-b)-\log\log r -3) \geq (\alpha a+\beta b)^2. \label{e:ab2/r2}   
	\end{equation} 
	Then $$\Pr [K_{r} \mathrm{\;is \;a \:minor \:of\;} {\bf G}(a,b,1-e^{-\alpha},1-e^{-\beta})  ] \leq e^{-2r\log{r}}.$$
\end{lem}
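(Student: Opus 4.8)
The plan is to bound the probability that $K_r$ is a minor of $\mathbf{G} = \mathbf{G}(a,b,1-e^{-\alpha},1-e^{-\beta})$ by a union bound over all possible models, exactly as in the classical Bollob\'as--Catlin--Erd\H{o}s argument, but tracking the two-part structure of the vertex set $A \cup B$. If $K_r$ is a minor of $\mathbf{G}$, then by Observation~\ref{o:model} there is a model, i.e. a collection of $r$ disjoint connected bags; since $v(\mathbf{G}) = a+b \leq r^2$, each of the $r$ bags can be taken to have size at most $r$ (in fact we may delete vertices from bags until they are minimal connected subgraphs, which are subtrees, and then the total size is at most $a+b \leq r^2$, so the average bag has size at most $r$; we only need an upper bound on sizes for the counting, so we fix a target profile). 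The key point is that the $b$ vertices of $B$ already form a clique, so a bag meeting $B$ contributes ``for free'' to the adjacency requirements among such bags. Write $r = r_B + r_A$ where $r_B$ bags meet $B$ and $r_A$ bags lie entirely inside $A$; since $|B| = b$ we have $r_B \leq b$, and the hypothesis $r \leq 2b$, $a+b\leq r^2$ will be used to control the range of $r_B$ and the bag sizes.

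First I would set up the enumeration: choose which $r_B \le \min\{r,b\}$ bags meet $B$, choose the (rooted, say via a spanning tree) bags themselves as disjoint connected vertex subsets — the number of ways to do this is at most something like $(a+b)^{a+b} \le r^{2r^2}$ by a crude Cayley-type bound, which is the kind of factor that gets absorbed because the probability of a fixed pattern of missing edges will be doubly exponentially small. Second, for a fixed choice of bags, I would estimate the probability that all $\binom{r}{2}$ required ``bag-to-bag'' edges are present. For a pair of bags both inside $A$, the relevant event is that at least one of the (at least one, typically many) potential $A$--$A$ edges between them is present, which fails with probability at most $e^{-\alpha \cdot (\text{number of potential edges})} \le e^{-\alpha}$ if both bags are nonempty; more usefully, if the bags have sizes $s,t$ the failure probability is $e^{-\alpha s t}$, and summing sizes gives a bound in terms of $(\alpha a)$-type quantities. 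For a pair where one bag meets $B$ and the other is in $A$, an $A$--$B$ edge suffices, contributing factors of $e^{-\beta}$ per potential edge; for a pair where both meet $B$ the edge is automatically present. The dominant contribution to the exponent is the ``hardest'' requirement: bags in $A$ (of which there are $r - r_B \geq r - b$) must connect to the $B$-bags, and this is where the left-hand side $\alpha(r-b)b(\cdots)$ of \eqref{e:ab2/r2} comes from — roughly, each of the $\ge r-b$ $A$-bags must send an edge into each of up to $b$ $B$-bags (or rather into $B$), and optimizing the bag-size allocation against the entropy term $r^{2r^2} \approx e^{O(r^2 \log r)}$ produces the $-\log\log r - 3$ slack.

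Third, I would combine: the union bound gives
$$\Pr[K_r \text{ is a minor of } \mathbf{G}] \le \sum_{\text{profiles}} (\#\text{bag choices}) \cdot \exp\!\big(-\alpha \cdot \Lambda_A - \beta \cdot \Lambda_{AB}\big),$$
where $\Lambda_A, \Lambda_{AB}$ are sums of products of bag sizes. Using the constraint that the total size of $A$-bags is at most $a$ and of $B$-parts of bags is at most $b$, convexity (AM--GM / Cauchy--Schwarz) shows $\alpha \Lambda_A + \beta \Lambda_{AB} \geq$ something comparable to $(\alpha a + \beta b)^2 / (\text{total})$ on the relevant scale, and the hypothesis \eqref{e:ab2/r2} is precisely engineered so that this exceeds the entropy term plus $2r\log r$; the $\log(r-b) - \log\log r - 3$ factor absorbs the $r^{2r^2}$ counting factor (which is $e^{2r^2\log r}$, and dividing by the $\geq (r-b)b$ pairs gives the per-pair logarithmic saving). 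The main obstacle I expect is the bookkeeping in this optimization step: correctly identifying which edge-requirements are the binding ones, handling bags of size zero or one, and showing that the worst case for the adversary (minimizing $\alpha\Lambda_A + \beta\Lambda_{AB}$ subject to the size budgets and connectivity) still leaves a bound of the form in \eqref{e:ab2/r2}. This is a careful but essentially routine large-deviations-plus-entropy computation; no new idea beyond Bollob\'as--Catlin--Erd\H{o}s is needed, only the adaptation to the bipartite-with-a-clique structure of $\mathbf{G}(a,b,p,q)$.
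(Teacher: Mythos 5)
Your high-level plan --- a union bound over bag configurations, a per-configuration probability estimate, and a convexity step in the style of Bollob\'as--Catlin--Erd\H{o}s --- is the same family of argument the paper uses, but as written your sketch has genuine gaps in the step that actually carries the proof. The central issue is the probability estimate for a fixed configuration. If the $A$-bags have sizes $x_i$, the probability that a given pair of them is joined is $1-e^{-\alpha x_ix_j}$, so the probability that \emph{all} required adjacencies hold is at most $\exp\bigl(-\sum_{i<j}e^{-\alpha x_ix_j}\bigr)$, and similarly with exponents $x_i(\alpha y_j+\beta z_j)$ for pairs consisting of an $A$-bag and a bag meeting $B$. Your combined bound $\exp(-\alpha\Lambda_A-\beta\Lambda_{AB})$, with $\Lambda_A,\Lambda_{AB}$ sums of products of bag sizes, bounds the wrong event (it is the probability that the relevant inter-bag edges are all \emph{absent}) and is monotone in the wrong direction: large bag products make the premodel event more, not less, likely. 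Consequently the way you invoke the hypothesis is also inverted. In the correct argument one lower-bounds the sum of exponentials by AM--GM, $\sum e^{-(\cdot)}\ge N e^{-\overline{(\cdot)}}$, and then uses (\ref{e:ab2/r2}) as an \emph{upper} bound on the average exponent; note that (\ref{e:ab2/r2}) bounds $(\alpha a+\beta b)^2$ from above, so it cannot be ``engineered so that $(\alpha a+\beta b)^2/(\text{total})$ exceeds the entropy term'' --- nothing in the lemma bounds $\alpha,\beta$ from below (they may be arbitrarily small, making your claimed inequality false while the lemma's conclusion only gets easier).

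The second gap is quantitative bookkeeping plus a missing case. Under (\ref{e:ab2/r2}) the exponent one can extract for a fixed configuration is of order $b\log r=\Theta(r\log r)$ (the paper shows each fixed blueprint is a premodel with probability at most $e^{-4r\log r}$), so a counting factor like $(a+b)^{a+b}\le e^{2r^2\log r}$ cannot possibly be absorbed --- the target bound itself is only $e^{-2r\log r}$. The paper instead takes the union over blueprints of $K_r$, bounded by $(a+b)^r\le e^{2r\log r}$, so that $e^{2r\log r}\cdot e^{-4r\log r}$ closes the argument. Finally, your heuristic that the binding requirement is always ``$A$-bags must attach to $B$'' fails when few or no bags meet $B$ (possible, since $a$ may well exceed $r$): the paper splits according to whether the number $s$ of bags contained in $A$ exceeds $b$, and when $s>b$ it works with the pairwise adjacencies among $A$-bags, deriving from (\ref{e:ab2/r2}) (using $b\ge r-b$) the auxiliary inequality $b^2(\log b-\log\log r-3)\ge\alpha a^2$. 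Your sketch does not cover this case, and without it the argument is incomplete even after the issues above are repaired.
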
	

\begin{proof} We denote the random graph ${\bf G}(a,b,1-e^{-\alpha},1-e^{-\beta})$ by $G$ for brevity. There are at most $$(a+b)^{r} {\leq} r^{2r} = e^{2r\log{r}}$$ blueprints $\mu$ of $K_r$ in $G$. Thus it suffices to show that the probability that for a fixed blueprint $\mu$  is a premodel of $K_r$ is at most $e^{-4r\log{r}}.$	
	
Let $\mc{K}_a$ be the collection of all bags of $\mu$ which lie completely in $A$, and let $\mc{K}_b$ be the collection of the remaining bags.  Let $\mc{K}_a = \{X_1,X_2,\ldots,X_s\}$, and let $x_i = |X_i|$ for $1 \leq i \leq s$. Note that $s\geq r-b$.  Let
$\mc{K}_b = \{U_1,U_2,\ldots,U_{r-s}\}$, and let $Y_i = U_i \cap A$, $Z_i = U_i \cap B$, $y_i = |Y_i|, z_i = |Z_i|$
for $1 \leq i \leq r-s$. Note that the probability that $X_i$ and $X_j$ are adjacent in $G$ is $1-e^{-\alpha x_ix_j}$, and the  probability that $X_i$ is adjacent to $U_j$ is $1-e^{-\alpha x_iy_j-\beta x_iz_j}$.

Suppose first that  $s > b$. We upper bound the probability that $\mu$ is premodel of $K_r$ by the probability that the bags in $\mc{K}_a$ are pairwise adjacent,  which is
$$
\prod_{1 \leq i < j \leq s}(1-e^{-\alpha x_ix_j}) \leq \exp\left(-\sum_{1 \leq i < j \leq s}e^{-\alpha x_ix_j}\right)
 $$
Thus it suffices to show that 
$$\sum_{1 \leq i < j \leq s}e^{-\alpha x_ix_j} \geq 4r\log{r}.$$
As $b \geq r-b$, the condition (\ref{e:ab2/r2}) implies that \begin{equation}\label{e:a2/b2}
b^2(\log b-\log\log r - 3) \geq \alpha a^2.
\end{equation} 

By the AM-GM inequality
\begin{align*}
\sum_{1 \leq i < j \leq s}e^{-\alpha x_ix_j} &\geq \binom{s}{2}\exp\left(-\frac{\alpha}{\binom{s}{2}}\sum_{1 \leq i < j \leq s}x_ix_j \right) \geq \frac{b^2}{2}\exp\left(-\alpha \left(\frac{a}{b} \right)^2 \right) \\ & \stackrel{(\ref{e:a2/b2})}{\geq} \frac{b^2}{2}\exp\left(-\log b+\log\log r + 3\right)\geq \frac{ 20b \log r}{2}{\geq}  4r\log{r},
\end{align*}
as desired.

Thus we assume that $s \leq b$. Now we upper bound the probability that every set in $\mc{K}_a$ is adjacent to every set in $\mc{K}_b$. Repeating the beginning of the argument in the previous case we see that it suffices to show that 
$$\sum_{1 \leq i \leq s} \sum_{1 \leq j \leq r-s}e^{-x_i (\alpha y_j + \beta z_j)} \geq 4r\log{r},$$
Let $x =\sum_{1 \leq i \leq s} x_i$. Applying the AM-GM inequality as before we obtain
\begin{align*}
\sum_{1 \leq i \leq s} &\sum_{1 \leq j \leq r-s}e^{-x_i (\alpha y_j + \beta z_j)}  \\
&\geq s(r-s) \exp\left(-\frac{1}{s(r-s)}\sum_{1 \leq i \leq s} x_i \left(\sum_{1 \leq j \leq r-s} \alpha y_j +  \sum_{1 \leq j \leq r-s} \beta z_j \right)\right)  \\ 
&\geq b(r-b) \exp\left(-\frac{x(\alpha(a-x)+\beta b)}{b(r-b)} \right) \geq b(r-b) \exp\left(-\frac{(\alpha a+\beta b)^2}{\alpha b(r-b)} \right)\\&\stackrel{(\ref{e:ab2/r2})}{\geq} b(r-b)\exp\left(-\log (r-b)+\log\log r + 3\right) = e^3 b \log r\geq 4r\log r,
\end{align*}
as desired.
\end{proof}	

\begin{thm}\label{t:lKrLower}
Let $\lambda$ be the Thomason's constant.	
There exists $\xi>0$ so that for every $0\leq \eps \leq 1/2$, $r \gg 1/\eps$ and 	$  \sqrt{\log r}/\eps \leq l \leq \log r$, we have
\begin{equation}
\label{e:lKrLower2}
\c(lK_r) \geq  lr + 
(1-\eps)\frac{\lambda^2r\log r}{4l} -lr\exp\left(-\frac{\xi\eps\log{r}}{l}\right).
\end{equation}
\end{thm}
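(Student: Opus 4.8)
For each admissible triple $(\eps,l,r)$ I would exhibit a random blade $\mc B=(G,S)$ that is $lK_r$-minor-free with positive probability and has density at least the right side of (\ref{e:lKrLower2}); the bound then follows from the last sentence of Corollary~\ref{c:fanden}. Write $\alpha^\ast$ for the maximizer in the definition of $\lambda$, put $p:=1-e^{-\alpha^\ast}$ (so $\lambda=p/(2\sqrt{\alpha^\ast})$), set $E:=\tfrac{\lambda^2 r\log r}{4l}$, and fix a small absolute constant $\xi>0$ to be pinned down at the end. Note that $\sqrt{\log r}/\eps\le l\le\log r$ forces $\eps\ge 1/\sqrt{\log r}$, so $\log\log r=o(\eps\log r)$; this, together with $r\gg1/\eps$, is what lets one absorb all lower-order errors below. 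The construction: take
\[ m:=\Big\lceil \tfrac{\lambda^2 r\log r}{4l^2}\Big\rceil,\quad b_0:=r-m,\quad b:=lb_0,\quad \beta:=\tfrac{\xi\eps\log r}{l}, \]
let $a$ be the largest nonnegative integer with $\alpha^\ast(r-b_0)b_0\big(\log(r-b_0)-\log\log r-3\big)\ge(\alpha^\ast a+\beta b_0)^2$, and set $G:={\bf G}(a,b,1-e^{-\alpha^\ast},1-e^{-\beta})$ with $S:=B$ its clique side. Routine estimates using the hypotheses give $m\le r/2$ (so $r\le 2b_0$), $\log m-\log\log r-3=(1-o(1))\log r>0$, $a=\Theta(r\log r/l)=\Omega(r)$, and $a+b_0\le r^2$, so Lemma~\ref{l:construction} applies to $(a,b_0,r,\alpha^\ast,\beta)$.

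\textbf{Ruling out an $lK_r$ minor.} With high probability all of the following hold at once: $G\sm S=G(a,1-e^{-\alpha^\ast})$ is connected; every vertex of $S$ has a neighbour in $A$ (which holds w.h.p.\ since $\beta a\gg\log r$); $K_r$ is a minor of none of the at most $(lr)^r$ subgraphs $G[A\cup S']$ with $S'\subseteq S$, $|S'|=b_0$ (by Lemma~\ref{l:construction} and a union bound, using $r\log(lr)=2r\log r-r\log(r/l)$ so that $(lr)^r e^{-2r\log r}\to 0$); and the Bernstein bound for $e(G)$ below holds. On this event each $\mc B[S']$ with $|S'|=b_0$ is semiregular with graph $G[A\cup S']$, hence $K_r$-minor-free by Lemma~\ref{l:minorcomplete}; and then $\mc B$ is $lK_r$-minor-free, for otherwise Lemma~\ref{l:subblade} gives disjoint $S_1,\dots,S_l\subseteq S$ with each $\mc B[S_i]$ having a $K_r$ minor, and choosing $j$ with $|S_j|\le|S|/l=b_0$ and any $S'\supseteq S_j$ with $|S'|=b_0$, the fact that $\Fan(\mc B[S_j],k)$ is obtained from $\Fan(\mc B[S'],k)$ by deleting the identified vertices $S'\sm S_j$ puts a $K_r$ minor in $\mc B[S']$ --- a contradiction.

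\textbf{The density bound.} Here $d(\mc B)=\big(e(G)-\tbinom b2\big)/a$ has mean exactly $\tfrac{(a-1)p}{2}+b(1-e^{-\beta})$, and a Bernstein inequality (using $a=\Omega(r)$, $b=O(a\log r)$) shows it exceeds its mean minus $O(\sqrt r\log r)=o(\eps E)$ with probability $\ge 1-e^{-2r\log r}$. So by Corollary~\ref{c:fanden} it remains to check
\[ \tfrac{(a-1)p}{2}+b(1-e^{-\beta})\ \ge\ lr+(1-\eps)E-lr\,e^{-\xi\eps\log r/l}+o(\eps E). \]
Since $b(1-e^{-\beta})=l(r-m)-l(r-m)e^{-\beta}$ and $l(r-m)e^{-\beta}\le lr\,e^{-\beta}=lr\,e^{-\xi\eps\log r/l}$, this reduces to $\tfrac{ap}{2}\ge lm+(1-\eps)E+o(\eps E)$. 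By the maximality of $a$,
\[ \tfrac{ap}{2}\ \ge\ \lambda\sqrt{m(r-m)\big(\log m-\log\log r-3\big)}\ -\ \tfrac{p\beta(r-m)}{2\alpha^\ast}\ -\ \tfrac p2 . \]
For the chosen parameters $m(r-m)\big(\log m-\log\log r-3\big)\ge(1-c_1\eps)\tfrac{\lambda^2 r^2\log^2 r}{4l^2}$ for an absolute constant $c_1<1$, so the first term is $\ge(1-c_1\eps)\,2E$; moreover $\tfrac{p\beta(r-m)}{2\alpha^\ast}\le\tfrac{8\xi\eps}{p}E$ (using $\alpha^\ast\lambda^2=p^2/4$) and $lm=(1+o(1))E$. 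Thus $\tfrac{ap}{2}-lm\ge\big(1-2c_1\eps-\tfrac{8\xi\eps}{p}-o(1)\big)E\ge(1-\eps)E+o(\eps E)$, the last step holding once $\xi$ is small enough that $2c_1+\tfrac{8\xi}{p}<1$ and $r$ is large in terms of $1/\eps$.

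\textbf{The main obstacle.} Everything hinges on this final inequality, whose tightness is dictated by $\max_{t>0}(\lambda\sqrt t-t)=\tfrac{\lambda^2}{4}$, attained at $t=\lambda^2/4$: the choice $m\approx\tfrac{\lambda^2 r\log r}{4l^2}$ is exactly the one making the gain $\lambda\sqrt{m(r-m)\log r}\approx 2E$ just balance the losses $lm\approx E$ and $(1-\eps)E$, leaving only $\Theta(\eps E)$ of slack. The hard part is therefore to verify that each lower-order contribution --- the $-\log\log r-3$ and the integer roundings inside~(\ref{e:ab2/r2}), the Bernstein deviation, the discrepancy between the $e^{-\beta}$-loss and the allotted $lr\,e^{-\xi\eps\log r/l}$, and the term $\tfrac{p\beta(r-m)}{2\alpha^\ast}$ --- is either $o(\eps E)$ or a controllably small multiple of $\eps E$; this is precisely where the hypotheses $l\le\log r$ (hence $\eps\ge 1/\sqrt{\log r}$) and $r\gg1/\eps$ are used.
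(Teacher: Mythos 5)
Your proposal is correct and follows essentially the same route as the paper's proof: the same random blade with vertex set $A\cup B$, $B$ a clique of size roughly $l(r-m)$ attached to $A$ with edge probability $1-e^{-\beta}$, the same use of Lemma~\ref{l:construction} combined with Lemmas~\ref{l:subblade} and~\ref{l:minorcomplete} and a union bound over the subsets of $B$ of size $\lfloor |B|/l\rfloor$ to rule out an $lK_r$ minor, and the same optimization of $a$, $m\approx\lambda^2 r\log r/(4l^2)$ and $\beta=\Theta(\eps\log r/l)$ against (\ref{e:ab2/r2}) to extract the gain $(1-\eps)\lambda^2 r\log r/(4l)$. The only cosmetic differences are that the paper sidesteps Bernstein-type concentration by observing that the density exceeds its expectation minus $1$ with probability at least $1/(a+|B|)\ge e^{-r}$, and that it fixes $a$ explicitly (then verifies (\ref{e:ab2/r2})) rather than taking $a$ maximal subject to it.
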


\begin{proof} Consider $a,b,\alpha$ and $\beta$ satisfying the conditions of Lemma~\ref{l:construction}.
Let $G={\bf G}(a,l(b+1)-1,1-e^{-\alpha},1-e^{-\beta})$ be a random graph, and let the set of vertices $A$ and $B$ be as in the definition of such random graph.  
Consider the blade $\mc{B}=(G,B)$. If $lK_r$ is a minor of $\mc{B}$ then by Lemma~\ref{l:subblade} there exists $B' \subseteq B$ with $|B'| \leq \lfloor |B|/l \rfloor = b$ such that $K_r$ is a minor of $\mc{B}[B']$. 
From Lemma~\ref{l:minorcomplete}  it follows that $K_r$ is a minor of $G[A \cup B']$. However, by Lemma~\ref{l:construction} the probability that $K_r$ is a minor of $G[A \cup B']$ for some $B' \subseteq B$ with $|B'|=b$ is at most $$|B|^{b}\exp(-2r\log r) \leq (lr)^r \exp(-2r\log r) \leq \exp(r(\log\log r -\log r)) \leq e^{-r}.$$
Thus the probability that $lK_r$ is a minor of $\mc{B}$ is at most $e^{-r}$.
Let $$D(a,b,\alpha,\beta) = \frac{a}{2}(1-e^{-\alpha}) +  lb(1-e^{-\beta}).$$
An easy computation shows that 
$$\bb{E}[d(\mc{B})] = \frac{(a-1)}{2}(1-e^{-\alpha}) + (l(b+1)-1)(1-e^{-\beta}) \geq D(a,b,\alpha,\beta) + 1. $$
As $$\Pr [d(\mc{B}) \geq \bb{E}[d(\mc{B})] -1 ] \geq \frac{1}{a+l(b+1)} \geq \frac{1}{r^2} \geq e^{-r},$$
it follows that there exists an $lK_r$-minor-free blade $\mc{B}$ with density at least $D(a,b,\alpha,\beta)$, i.e. $c_\infty(lK_r) \geq D(a,b,\alpha,\beta)$. 

It remains to choose $a,b,\alpha$ and $\beta$ satisfying the conditions of Lemma~\ref{l:construction} so that $$D(a,b,\alpha,\beta)  \geq  lr + 
(1-2\eps)\frac{\lambda^2r\log r}{4l} -lr\exp\left(-\frac{2\xi\eps\log{r}}{l}\right).$$
(Note that we replaced $\eps$ by $2\eps$ for later convenience.)
Let constant $0<\alpha<1$ be chosen to maximize  $\frac{1-e^{-\alpha}}{2\sqrt{\alpha}}$, i.e. $\lambda=\frac{1-e^{-\alpha}}{2\sqrt{\alpha}}$, and let \begin{align*} &\gamma=\frac{\lambda(1-\eps)}{2},  &\sigma = \frac{\gamma r\log{r}}{l}, \\&k = \frac{\gamma^2r\log{r}}{l^2} =\frac{\gamma\sigma}{l},  &b = \lceil r-k \rceil, \\ &\beta =\frac{\eps\sqrt{\alpha}\sigma}{2r},
&a=\left\lceil \frac{(1-\eps)\sigma}{\sqrt{\alpha}} \right\rceil.
\end{align*}
Note that by the choice of $l$ we have
\begin{equation}
\gamma^2 \frac{r}{\log r} \leq k \leq  
\frac{\gamma^2 \eps r \log r}{\log r} = \frac{\eps r}{2}
\end{equation}
Let us first verify that $a,b,\alpha$ and $\beta$  satisfy  (\ref{e:ab2/r2}). For $r \gg 1/\eps$, we have
\begin{align*} 
\alpha&(r-b)b(\log (r-b)-\log\log r -3) \\
&\geq (1-\eps/2)^2\alpha rk\log r 
\\&=\left(\sqrt{\alpha}(1-\eps/{2})\sigma \right)^2
\end{align*}
Thus  it suffices to show that
$\alpha a + \beta b \leq  \sqrt{\alpha}(1-\eps/{2})\sigma$, which is immediate from the definitions.

We now return to the computation of $D(a,b,\alpha,\beta)$ for $a,b,\alpha$ and $\beta$ as above. Let $\xi = \sqrt{\alpha}\lambda/16$. We have
\begin{align*} 
D(a,b,\alpha,\beta) &\geq \frac{(1-\eps)\sigma}{2\sqrt{\alpha}}(1-e^{-\alpha}) + l\left(r-\frac{\gamma\sigma}{l}\right)(1-e^{-\beta}) \\ &\geq
2\gamma\sigma +lr - \gamma\sigma -lr\exp\left(-\frac{\eps\sqrt{\alpha}\lambda(1-\eps)\log{r}}{4l}\right) \\ 
&=
lr + \left(\frac{\lambda(1-\eps)}{2}\right)^2\frac{r \log r}{l} -lr\exp\left(-\frac{\eps\sqrt{\alpha}\lambda(1-\eps)\log{r}}{4l}\right)
\\ &\geq  lr + 
(1-2\eps)\frac{\lambda^2r\log r}{4l} -lr\exp\left(-\frac{2\xi\eps\log{r}}{l}\right),
\end{align*}
which finishes the proof of the theorem.
\end{proof}

\begin{proof}[Proof of Theorem~\ref{thm:main} a)]
	Let $\xi$ be as in Theorem~\ref{t:lKrLower}, and let $2 \sqrt{\log r} \leq l 
	\leq  \frac{\xi\log r}{2\log\log r}$. Thus $l = c\log r/\log\log r$ for some $c \leq \xi/2$. It suffices to show that $\c(lK_r) \geq lr$.
	By Theorem~\ref{t:lKrLower} applied with $\eps =1/2$ we have \begin{align*}
	\c(lK_r) - lr &\geq \frac{\lambda^2r\log r}{8l} -lr\exp\left(-\frac{\xi\log{r}}{2l}\right) \\ &=
	\frac{\lambda^2}{8c}r\log\log r - \frac{cr\log r}{\log\log r}e^{-\frac{\xi\log\log{r}}{2c}} \\ &\geq 
	  \frac{\lambda^2}{8c}r\log\log r - \frac{cr}{\log\log r} \geq 0,
	\end{align*}
	as desired.
\end{proof}
\begin{proof}[Proof of Theorem~\ref{thm:lower}]
The inequality (\ref{e:lKrLower2}) gives the required bound, as long as we show that for every $0<\eps \leq 1$ there exists $\delta >0$ so that for $l \leq \delta\log r/\log\log r$ we have $$ lr\exp\left(-\frac{\xi\eps\log{r}}{l}\right) \leq \eps \frac{r\log r}{l}. $$	

Let $\delta = \operatorname{min} \{\xi\eps,\sqrt{\eps}\}$. Then
\begin{align*}\exp&\left(-\frac{\xi\eps\log {r}}{l}\right)  \leq \exp\left(-\frac{\xi\eps\log\log r}{\delta}\right)\leq \frac{1}{\log r} \leq\frac{\eps (\log\log r)^2 }{\delta^2\log r} \leq \eps\frac{\log r}{l^2}, \end{align*}
as desired.
\end{proof}

\section{Hefty graphs}\label{s:tools}

In this section we introduce the tools which will be subsequently used to upper bound $\c(lK_r)$. These tools are built around the concept of hefty graphs. We say that a graph $H$ is \emph{hefty} if $H=K_2$, or $\deg(v) \geq 0.65|V(H)|$ for every $v \in V(H)$. (Our choice of  constant $0.65$ is motivated by Lemma~\ref{lem:ReedWood} below.) 

Classes of graphs with similar properties are considered in many proofs of upper bounds on the extremal function and the following lemmas demonstrate some of the ways in which they are used.

The first lemma  allows one to replace any graph by a hefty graph at a cost of a constant fraction of density. It is a variant of a result first proved by Mader~\cite{Mader68}, and appears in a slightly stronger form than the one stated below in Reed and Wood~\cite{ReeWoo15}.

\begin{lem}\label{lem:ReedWood}
	Let $G$ be a graph such that $d(G)\neq 0$. Then there exists a hefty minor $H$ of $G$  such that $|V(H)| \geq d(G)/2$.
\end{lem}

Next lemma shows that if a hefty graph $G$ contains a  small model of a graph $H$ and a graph $H'$ is obtained from $H$ by adding a few edges then $G$ contains a model of $H'$. 
We say that a set $F$ of pairs of vertices of $G$ is a \emph{completion} of a blueprint $\mu$ of a graph $H$ in a graph $G$ if $\mu$ is a model of $H$ in a graph obtained from $G$ by adding $F$ to $E(G)$. The \emph{defect} of a blueprint $\mu$ is the minimum size of a completion of $\mu$.

\begin{lem}\label{lem:heftymodel2}
	Let $G$ be a hefty graph, and let $\mu$ be a blueprint of a graph $H$ in $G$ with defect 
	at most $0.3|V(G)|-|\mu(H)|$. Then $\mu$ extends to a model of $H$ in $G$.
\end{lem}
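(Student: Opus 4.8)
The plan is to show that the defect condition gives enough room to connect up the bags of $\mu$ one at a time, using heftiness to guarantee that there is always a fresh vertex outside $\mu(H)$ adjacent to everything we need. Let $F$ be a minimum completion of $\mu$, so $|F|$ equals the defect, and write $n = |V(G)|$. The idea is to process the pairs in $F$ in some order, and for each pair $\{u,v\} \in F$ with $u \in \mu(x)$, $v \in \mu(y)$ (possibly $x = y$, if the issue is that $G[\mu(x)]$ is disconnected), pick a brand-new vertex $w \in V(G) \setminus \mu(H)$ that is adjacent to both $u$ and $v$, and add $w$ to the bag $\mu(x)$ (or to either bag, fixing a convention); this simultaneously realizes the edge of $H$ or repairs connectivity, since $w$ sees $u$ and $v$. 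After doing this for all pairs in $F$ we obtain a genuine model of $H$ in $G$.

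First I would set up the greedy process carefully: maintain a growing set $W$ of used ``connector'' vertices, initially empty, and at each step we have added at most $|F| \le 0.3n - |\mu(H)|$ vertices, so $|\mu(H)| + |W| \le 0.3n$ throughout (each step adds at most one connector). For the current pair $\{u,v\}$, I want a vertex $w \in V(G) \setminus (\mu(H) \cup W)$ adjacent to both $u$ and $v$. By heftiness, $u$ has at least $0.65n$ neighbours and $v$ has at least $0.65n$ neighbours, so they have at least $2(0.65n) - n = 0.3n$ common neighbours; since the forbidden set $\mu(H) \cup W$ has size at most $0.3n$, there is strictly more... here I need to be slightly more careful with the inequality, but the point is that $|N(u) \cap N(v)| > |\mu(H) \cup W|$ will hold — if necessary one argues that at the step when a connector is actually needed we have used at most $|F| - 1$ connectors so far, giving $|\mu(H) \cup W| \le 0.3n - 1 < 0.3n \le |N(u)\cap N(v)|$, so a valid $w$ exists. (The edge case $H = K_2$ is trivial and can be dispatched first, or folded into the general argument since a $K_2$ in any graph with an edge needs no repair.)

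The main obstacle is bookkeeping the constants so that the count $|\mu(H)| + |W| \le 0.3n$ is maintained \emph{strictly} below the $0.3n$ common-neighbour bound at every step where a connector is required, and handling the degenerate possibility that a bag is a single vertex or that $u = v$ is impossible but the two endpoints already coincide in a bag (repairing connectivity within one bag). These are all routine once the invariant is stated correctly, so I would state the invariant as: after $i$ steps, the model-so-far $\mu_i$ satisfies $|\mu_i(H)| \le |\mu(H)| + i$ and the uncorrected pairs form a completion of $\mu_i$ of size $|F| - i$. Then at step $i+1$ we have $|\mu_i(H)| \le |\mu(H)| + i \le |\mu(H)| + |F| - 1 \le 0.3n - 1$, so the common-neighbour set of the two relevant vertices, of size $\ge 0.3n$, contains a vertex outside $\mu_i(H)$, which we add. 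After $|F|$ steps every pair is corrected and $\mu_{|F|}$ is a model of $H$ in $G$, as desired.
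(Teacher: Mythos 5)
Your proposal is correct and is essentially the paper's argument: the paper proves the lemma by induction on the defect, at each step picking a pair $\{u,v\}$ of a minimum completion, using heftiness to find a common neighbour $w$ of $u$ and $v$ outside $\mu(H)$ (possible since $|\mu(H)|$ plus the remaining defect stays below $0.3|V(G)| \le |N(u)\cap N(v)|$), and absorbing $w$ into the bag containing $u$. Your greedy process with the stated invariant is just this induction unrolled, so no further comparison is needed.
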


\begin{proof}
	We prove the lemma by induction on the defect $c$ of $\mu$. The base case $c=0$ is immediate. For the induction step, let $F$ be a completion of $\mu$ with $|F|=c\geq 1$ and consider arbitrary $f=\{u,v\} \in F$. Note that $u$ and $v$ have at least $0.3|V(G)|$ common neighbors in $G$, and so there exists $w \in V(G) - \mu (H)$ adjacent to both $u$ and $v$. Let $x \in V(H)$ be such that $u \in \mu(x)$. Adding $w$ to $\mu(x)$ we obtain a blueprint $\mu'$ of $H$ in $G$ such that $F \setminus \{f\}$ is a completion of $\mu'$. By the induction hypothesis $\mu'$ extends to a model of $H$ in $G$ as desired.
\end{proof}

As a first application of the above lemmas we prove Theorem~\ref{t:bipartite}. The technical part of the proof is contained in the following lemma.
 
\begin{lem} \label{l:heftyminor3}
	Let $G$ be a hefty graph on $a$ vertices. Let $s,t,k,l$ be positive integers such that 
	$sk+tl \leq 3a/20$ and $(k-2)l-2 \geq \log_2 s$. Then $K_{s,t}$ is a minor of $G$.
\end{lem}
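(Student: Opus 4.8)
The plan is to build a model of $K_{s,t}$ in $G$ by first constructing a large ``binary tree''-like structure using only a small number of vertices, which will give us many candidate bags to play the role of the $s$-side, and then to attach the $t$-side directly and use Lemma~\ref{lem:heftymodel2} to complete the construction. First I would set aside $t$ disjoint single-vertex bags $w_1,\dots,w_t$ for the $t$-side of $K_{s,t}$; this uses $t$ vertices. Next, in the remaining part of $G$, I would try to construct $s$ disjoint connected bags, each of size at most $k$, that will serve as the $s$-side, together with enough structure so that each of these bags sees each $w_j$. The constraint $sk+tl \le 3a/20$ says exactly that the total number of vertices we plan to use, namely $sk + tl$ (taking the $t$-side bags to have size $l$ rather than $1$ — this is where the parameter $l$ enters), is at most $3a/20$, leaving a comfortable $0.3|V(G)| - |\mu(H)|$ slack for the defect bound in Lemma~\ref{lem:heftymodel2}.

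The key idea for producing $s$ disjoint small connected bags that are pairwise ``almost adjacent'' to a common target set is a branching/splitting construction: starting from a single vertex (or small connected set), repeatedly split a connected subgraph into two connected pieces using a few extra vertices, doubling the number of pieces at each of $(k-2)l$-ish steps, so that after $\log_2 s$ rounds we have $s$ pieces; the hypothesis $(k-2)l - 2 \ge \log_2 s$ is precisely what guarantees there is enough ``room'' (in terms of the allowed bag sizes $k$ and $l$) to carry out $\lceil \log_2 s\rceil$ rounds of splitting while keeping each bag of size at most $k$ and each target bag of size at most $l$. Concretely I expect the construction to interleave the $t$-side: each $w_j$ is grown into a bag of size about $l$ that is forced, via the heftiness degree condition $\deg(v) \ge 0.65|V(G)|$, to reach into the growing tree; then the $s$ leaf-bags inherit adjacency to all $t$ target bags. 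Once the blueprint $\mu$ consisting of the $s$ leaf-bags and the $t$ enlarged target bags is in hand, its defect is small — at most something like $st$ missing pairs minus what the construction already secured — and crucially is bounded by $0.3|V(G)| - |\mu(H)|$ because $|\mu(H)| \le sk+tl \le 3a/20$; Lemma~\ref{lem:heftymodel2} then upgrades $\mu$ to a genuine model of $K_{s,t}$, and Observation~\ref{o:model} finishes.

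The main obstacle, I expect, is the bookkeeping in the splitting step: one must verify that a connected set of bounded size in a hefty graph can always be partitioned (after adding a bounded number of new vertices, drawn from the still-unused $\ge 17a/20$ vertices) into two connected sets, and that doing this $\lceil\log_2 s\rceil$ times keeps every bag within its size budget $k$ (resp.\ $l$ for the target side) — this is exactly where the exponential-versus-linear tradeoff $(k-2)l - 2 \ge \log_2 s$ gets used, and getting the constants ($-2$, the factor $(k-2)$ rather than $k$) to line up will require care. A secondary point is making sure that throughout the construction we never exhaust the vertex budget: since heftiness gives every vertex degree $\ge 0.65a$, any two vertices have $\ge 0.3a$ common neighbours, so as long as we have used $< 0.3a$ vertices total we can always find a fresh common neighbour, and $sk+tl \le 3a/20 < 0.3a$ guarantees this with room to spare.
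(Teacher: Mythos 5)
Your proposal gets the final step right (a blueprint with all inter-bag adjacencies secured and $|\mu(K_{s,t})|\le sk+tl\le 3a/20$ can be completed via Lemma~\ref{lem:heftymodel2}, whose defect budget $0.3a-|\mu(K_{s,t})|$ comfortably absorbs the at most $sk+tl$ edges needed to make each bag connected), but the heart of the lemma --- how to secure all $st$ adjacencies between bags of size $k$ and bags of size $l$ --- is where the gap lies. You cannot leave any substantial fraction of those $st$ pairs to the completion step: in the regime where this lemma is actually applied (Theorem~\ref{t:bipartite}), $st$ can be of order $a^2/\log a$, vastly exceeding the linear-in-$a$ defect budget of Lemma~\ref{lem:heftymodel2}. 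And heftiness alone does not force an adversarially grown bag of size $l$ to ``reach into'' the rest of the construction: a single vertex has up to $0.35a$ non-neighbours, and if the $l$ vertices of a grown bag happen to share their non-neighbourhood, that bag still misses a set of $0.35a$ vertices, which could contain many of your $s$-side bags. So the claim that the enlarged target bags ``inherit adjacency to all $t$ (resp.\ $s$) bags'' has no mechanism behind it as stated.

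Relatedly, your reading of the hypothesis $(k-2)l-2\ge\log_2 s$ as a depth budget for $\lceil\log_2 s\rceil$ rounds of a binary splitting construction does not match its actual role, and the splitting idea fails outright in admissible parameter ranges such as $k=3$, $l=\lceil\log_2 s\rceil+2$: bags of size $3$ cannot encode a tree of depth $\log_2 s$. The paper instead chooses all bags $X_1,\dots,X_{2t}$ (size $l$) and $Y_1,\dots,Y_s$ (size $k$) uniformly at random; a random $l$-set is, with probability $\ge 2/3$, ``good'' in the sense that all but at most $3a(1-0.65)^l$ vertices have a neighbour in it, and then a random $k$-set avoids a good $l$-set with probability less than $2^{-(l-2)k}$. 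The hypothesis enters precisely as the union-bound condition $s\cdot 2^{-(l-2)k}\le 1/4$, so at least half of the $X_i$ are adjacent to \emph{all} $Y_j$, and $t$ such $X_i$ together with the $Y_j$ form a premodel with every inter-bag adjacency already present; only then is Lemma~\ref{lem:heftymodel2} invoked, solely to connect the bags internally. To repair your argument you would need to replace the deterministic growing/splitting by some such averaging or random-choice argument that certifies all $st$ adjacencies simultaneously.
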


\begin{proof} Let $d=0.65$. For every  $v \in V(G)$ and a set $X \subseteq V(G)\setminus\{v\}$ of size $l$ chosen uniformly at random the probability that $v$ has no neighbor in $X$ is at most $(1 - d)^l$.  Thus for a set $X$ as above the expected number of vertices in $V(G)-X$ with no neighbor in $X$ is at most $a(1-d)^l$. We say that a set $X$ is \emph{good} if at most $3a(1-d)^l$ vertices in $V(G)-X$ have no neighbor in $X$. By Markov's inequality the probability that $X$ is good is at least $2/3$.
	
Given a good set $X$ if a set $Y$ of size $k$ is selected from $V(G)-X$ uniformly at random then the probability that no vertex of $Y$ is adjacent to a vertex of $X$ is at most $(4(1-d)^{l})^k < (1/2)^{(l-2)k}$. 
	
We now select disjoint subsets  $X_1,X_2,\ldots,X_{2t},Y_1,Y_2,\ldots,Y_{s}$ of $V(G)$ such that $|X_i|=l$, $|Y_j|=k$ uniformly at random. We say that a pair $(i,j)$ is \emph{fulfilled} if there exist $\{u,v\} \in E(G)$ with $u \in X_i$, $v \in V_j$. We say that $X_i$ is \emph{perfect} if $(i,j)$ is fulfilled for every $j$, and we say that $X_i$ is \emph{flawed} otherwise.

By the calculations above the probability that $X_i$ is good, but flawed  is at most $s(1/2)^{(l-2)k} \leq 1/4$. Therefore the probability that $X_i$ is perfect is at least $1/2$. Thus there exists a choice of subsets as above such that at least $t$ of subsets $X_1,X_2,\ldots,X_{2t}$ are perfect. If, say, $X_1,\ldots,X_t$ are these subsets then $Y_1,Y_2,\ldots,Y_{s},X_1,X_2,\ldots,X_{t}$ form a premodel $\mu$ of $K_{s,t}$ which can be extended to a model by Lemma~\ref{lem:heftymodel2}, as $2|\mu(K_{s,t})| \leq 2(sk+tl) \leq 3a/10$. 
\end{proof}

\begin{proof}[Proof of Theorem~\ref{t:bipartite}] Let $d= 40 (\sqrt{st \log s}+s+t)$, and let $G$ be a graph with $d(G) \geq d$. By Lemma~\ref{lem:ReedWood} there exists a hefty minor $H$ of $G$ with $a=|V(H)|\geq d/2$. Let $p = \sqrt{st \log_2 s}$, $k =\lceil p/s\rceil+2$, and $l =\lceil p/t\rceil+2$. Then we have
	\begin{align*}
	(l-2)k- 2&\geq (k-2)(l-2) \geq \frac{p^2}{st}=\log_2 s, \qquad \mathrm{and}\\
	sk+tl &< s(p/s+3)+ t(p/t+3) \\&= 2\sqrt{st \log_2 s}+3s+3t  \leq 3d/40 \leq 3a/20.
	\end{align*}
Thus $s,t,k$ and $l$ satisfy the conditions of Lemma~\ref{l:heftyminor3}. It follows that $K_{s,t}$ is a minor of $H$ as desired.
\end{proof}	

Next we prove a counterpart of Lemma~\ref{l:construction}. We will show that if a graph has the structure similar to that of the random examples of $K_r$ minor-free graphs considered in that lemma, but is somewhat denser, then it has a $K_r$ minor.

To make the above statement precise we need a definition. 
 We say that a partition $(A,B)$ of the vertices of the graph $G$ is \emph{$(a,b,\delta)$-semicomplete} if $|A|=a$, $|B|=b$, $G[A]$ is hefty, $G[B]$ is complete and every $v \in B$ has at least $(1 -\delta)a$ neighbors in $A$. We say that $G$ is
\emph{$(a,b,\delta)$-semicomplete} if $V(G)$ admits an $(a,b,\delta)$-semicomplete partition.  
We will investigate the range of parameters which guarantee the presence of a $K_r$ minor in an $(a,b,\delta)$-semicomplete graph. First, we need an easy lemma.

\begin{lem}\label{l:easyrandom}
	Let $G$ be a graph, let $d=e(G)/\binom{n}{2}$ and let $X \subseteq V(G)$, $|X|=k$  be chosen uniformly at random. Then $$\Pr \left[e(G[X]) \geq \left(d-\frac 12\right)\binom{k}{2}\right] \geq \frac{1}{2}.$$ 
\end{lem}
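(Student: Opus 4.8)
The plan is to prove this by a first-moment estimate, combining the exact value of $\bb{E}[e(G[X])]$ with an application of Markov's inequality to the nonnegative random variable $\binom{k}{2}-e(G[X])$, exploiting the trivial bound $e(G[X])\le\binom{k}{2}$. (Throughout, $n=|V(G)|$; one may assume $n\ge2$ and $2\le k\le n$, since otherwise $d$ is undefined or $\binom{k}{2}=0$ and the inequality holds vacuously.)

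First I would compute the expectation. For a fixed edge $\{u,v\}\in E(G)$, the probability that both endpoints fall into the uniformly random $k$-subset $X$ equals $\binom{n-2}{k-2}/\binom{n}{k}=\binom{k}{2}/\binom{n}{2}$; summing over the $e(G)$ edges and using linearity of expectation gives $\bb{E}[e(G[X])]=e(G)\binom{k}{2}/\binom{n}{2}=d\binom{k}{2}$.

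Next I would handle the deviation. Since $e(G)\le\binom{n}{2}$ we have $d\le1$, and if $d<\tfrac12$ then $(d-\tfrac12)\binom{k}{2}<0\le e(G[X])$ always, so the claimed inequality holds with probability $1$; hence assume $\tfrac12\le d\le1$. The random variable $W:=\binom{k}{2}-e(G[X])$ is nonnegative with $\bb{E}[W]=(1-d)\binom{k}{2}$, so Markov's inequality yields
$$\Pr\!\left[W>\left(\tfrac32-d\right)\binom{k}{2}\right]\le\frac{(1-d)\binom{k}{2}}{\left(\tfrac32-d\right)\binom{k}{2}}=\frac{1-d}{\tfrac32-d}\le\frac12,$$
where the last inequality is precisely the assumption $d\ge\tfrac12$ rearranged. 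Passing to the complementary event gives $\Pr[e(G[X])\ge(d-\tfrac12)\binom{k}{2}]\ge\tfrac12$, as desired.

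I do not expect any real obstacle: this is a routine application of Markov's inequality. The only points that need a little care are organizational---separating off the degenerate parameter ranges and the regime $d<\tfrac12$ (where the stated threshold is nonpositive), and recording that $d\le1$ so that the factor $\tfrac32-d$ stays in $(0,1]$ and Markov's inequality delivers the clean constant $\tfrac12$ rather than a weaker bound.
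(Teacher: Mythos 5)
Your proof is correct and follows essentially the same route as the paper: compute $\bb{E}[e(G[X])]=d\binom{k}{2}$ and apply Markov's inequality (to the bounded complementary variable $\binom{k}{2}-e(G[X])$), which is exactly what the paper's one-line proof intends. Your write-up simply makes explicit the reverse-Markov step and the trivial regime $d<\tfrac12$, both of which the paper leaves implicit.
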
 

\begin{proof}
	Note that the expected value of $e([G[X]])$ is $d \binom{k}{2}$, and so the lemma follows immediately from Markov's inequality.
\end{proof}

We are now ready to prove the first of the main results on minors in semicomplete graphs.

\begin{lem}\label{l:semicomplete1} There exists $\eps >0$ satisfying the following. Let $a,k,r$ be positive integers and $\delta>0$ be real so that
	\begin{align}
	k \cdot\brm \max \left\{\sqrt{\log k},-\frac{\log r}{\log{\delta}} \right\} < \eps a , \label{e:condition1}
	\end{align}
then every $(a,r-k,\delta)$-semicomplete graph has a $K_r$ minor.	
\end{lem}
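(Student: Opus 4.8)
The plan is to exhibit a $K_r$ minor in any $(a,b,\delta)$-semicomplete graph $G$ with $b = r-k$, by building a premodel whose $k$ "large" bags live in $A$ and whose $b = r-k$ "small" bags are singletons in $B$, then invoking Lemma~\ref{lem:heftymodel2} to fill in the missing adjacencies. First I would take the $(a,r-k,\delta)$-semicomplete partition $(A,B)$ guaranteed by hypothesis, so that $G[A]$ is hefty, $G[B] = K_b$, and each $v\in B$ misses at most $\delta a$ vertices of $A$. The $b$ vertices of $B$ are already pairwise adjacent (they form $K_b$) and together will serve as $b$ singleton bags of the $K_r$-premodel; it remains to produce $k$ further pairwise-adjacent connected bags inside $A$, each adjacent to every vertex of $B$, with total size small enough (roughly $o(a)$) for Lemma~\ref{lem:heftymodel2} to apply.

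The key step is a random selection inside $A$, in the spirit of Lemma~\ref{l:heftyminor3}. Choose disjoint subsets $X_1,\dots,X_k\subseteq A$ of some common size $m$ uniformly at random. For two bags $X_i,X_j$ to be adjacent in $G$ it suffices that some vertex of $X_i$ has a neighbor in $X_j$; since $G[A]$ is hefty (minimum degree $\ge 0.65|A'|$ for the relevant induced subgraph, but one must be careful that passing to $A$ may change the "heftiness base" — this is the technical point, see below), a single vertex of $X_i$ fails to have a neighbor in $X_j$ with probability at most $(1-0.65)^m$, so the pair $(i,j)$ fails with probability at most $m(1-0.65)^m$, and a union bound over the $\binom k2$ pairs shows that all bags are pairwise adjacent with good probability once $m = \Theta(\log k)$. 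Separately, for $X_i$ to be adjacent to a fixed $v\in B$: $v$ has at most $\delta a$ non-neighbors in $A$, so a uniformly random $m$-subset avoids all neighbors of $v$ with probability at most $\delta^{\,m'}$ for a suitable $m'=\Omega(m)$; a union bound over the $b\le r$ vertices of $B$ and the $k$ bags shows that every bag is adjacent to every vertex of $B$ with good probability once $m' = \Omega(\log r/\log(1/\delta)) = \Omega(-\log r/\log\delta)$. Taking $m$ of order $\max\{\sqrt{\log k},\,\log k,\,-\log r/\log\delta\}$ — in fact $m = \Theta(\max\{\log k, -\log r/\log\delta\})$ suffices, and one absorbs the $\sqrt{\log k}$ into the bound freely — both events hold simultaneously with positive probability, so a good choice of $X_1,\dots,X_k$ exists. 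The bags $X_1,\dots,X_k$ together with the $b$ singletons in $B$ form a premodel $\mu$ of $K_r$ with $|\mu(K_r)| = km + b \le km + r$, and since $km \le \eps' a$ for small $\eps'$ (this is where condition~(\ref{e:condition1}) is used, with $\eps$ chosen small relative to the heftiness constant $0.65$ and the constant $0.3$ of Lemma~\ref{lem:heftymodel2}), the defect of $\mu$ — at most $\binom k2 + kb \le$ (something like) $r^2/2$, but crucially we need it $\le 0.3|V(G)| - |\mu(K_r)|$ — is small enough to apply Lemma~\ref{lem:heftymodel2}. Hmm, here one must check $0.3|V(G)| \ge 0.3a$ dominates $r^2$; this forces a slightly more careful choice, namely we should not take all of $A$ as the hefty host but rather first discard $B$ and work inside $A$, where $|A| = a \gg r$ by~(\ref{e:condition1}) (since $k\sqrt{\log k} < \eps a$ and $k \ge$ something forces $a$ large compared to $r$; indeed $r = b + k \le a/2 + k$, so $a \ge 2(r-k) \gg r^2/\mathrm{poly}$ need NOT hold — this is the genuine obstacle).

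The main obstacle is therefore the interface between the two lemmas: Lemma~\ref{lem:heftymodel2} requires the defect to be at most $0.3|V(H)| - |\mu(H)|$ where $H$ is the hefty host, but the "missing" edges we need to add — the non-edges among $X_1,\dots,X_k$ and between the $X_i$ and $B$ and within $B$ — could number up to $\Theta(r^2)$, which can exceed $0.3a$. The fix is to realize these adjacencies by the random choice itself rather than by completion: the random argument above already makes all $\binom k2$ bag–bag pairs and all $k(r-k)$ bag–$B$ pairs genuinely adjacent in $G$, so the only role left for Lemma~\ref{lem:heftymodel2} is to make each bag $X_i$ \emph{connected} (turn the premodel into a model), whose defect is at most $\sum_i(|X_i|-1) < km = o(a)$, comfortably below $0.3a - km$. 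So the correct order is: (1) extract the hefty part — but we need $A$ itself hefty, which is given; (2) run the double union bound inside $A$ to get a premodel with all required cross-adjacencies present; (3) apply Lemma~\ref{lem:heftymodel2} purely to connect the bags; (4) verify the two probabilistic bounds sum to less than $1$ precisely when $m$ is chosen as a constant times the left-hand side of~(\ref{e:condition1}) divided by $k$, which is where $\eps$ gets pinned down. The routine part is optimizing $m$ and the constants; the conceptual care goes into making sure \emph{all} inter-bag edges come from randomness and only connectivity is left for the completion lemma.
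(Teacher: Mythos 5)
Your overall architecture (random disjoint sets inside $A$ to guarantee adjacency to every vertex of $B$, singleton bags on $B$, and Lemma~\ref{lem:heftymodel2} used only to restore connectivity of the bags rather than to supply inter-bag edges) matches the paper's proof in spirit, but there is a genuine quantitative gap in how you obtain the pairwise adjacency \emph{among the $k$ bags in $A$}. Your union bound over the $\binom{k}{2}$ pairs, with each pair failing with probability roughly $0.35^{m}$, forces bags of size $m=\Theta(\log k)$ (and in a worst-case hefty graph, e.g.\ a balanced complete $3$-partite graph, the failure probability for independently chosen random $m$-sets really is only exponential in $m$, not in $m^2$, so this cannot be sharpened). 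Hence your construction needs $km=\Theta(k\log k)\le \eps' a$, whereas hypothesis~(\ref{e:condition1}) only provides $k\sqrt{\log k}<\eps a$. Your remark that one ``absorbs the $\sqrt{\log k}$ into the bound freely'' is exactly the unjustified step: $k\sqrt{\log k}<\eps a$ does not imply $k\log k=O(a)$, and the discrepancy is not cosmetic — in the application of this lemma (the proof of Theorem~\ref{thm:main}~b), where $k\approx Da/l$, $l\approx C\log r/\log\log r$ and $a$ can be of order $r\sqrt{\log r}$) one has $k\sqrt{\log k}=o(a)$ while $k\log k\gg a$, so the weaker statement your argument proves would not suffice.

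The paper circumvents this by not using a pair-by-pair union bound for the mutual adjacencies at all: it additionally samples a random set $Z\subseteq A$ of size about $a/10$, notes via Lemma~\ref{l:easyrandom} that $d(G[Z])\ge a/200$ with probability at least $1/2$, and then invokes the extremal function~(\ref{e:Thomason}) (with $c(K_k)=O(k\sqrt{\log k})$, which is where the $\sqrt{\log k}$ in~(\ref{e:condition1}) comes from) to find a genuine model of $K_k$ inside $G[Z]$. The random ``good'' sets $X_i$, of size about $a/(20k)$ each and chosen so that every vertex of $B$ has a neighbour in every $X_i$ (failure probability at most $r\delta^{a/(20k)}\le 1/3$ by the second term of~(\ref{e:condition1})), are then merged into the $k$ bags of this model, and Lemma~\ref{lem:heftymodel2} is applied — exactly as you intended — only to reconnect each enlarged bag, with defect and total bag size $O(a)$ well inside the $0.3a$ budget. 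To repair your proposal you would replace your pairwise union bound by this Thomason-based step; the rest of your outline (adjacency to $B$, singleton bags on the clique $B$, connectivity via the completion lemma) is sound.
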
 
	
\begin{proof}
	Let $(A,B)$ be an $(a,r-k,\delta)$-semicomplete partition of vertices of a graph $G$.
	Let $0.05 \leq c \leq 0.1$ be such that $s=ca/k$ is an integer. We say that $X \subseteq A$ with $|X|=s$ is \emph{bad} if some vertex of $B$ has no neighbors in $X$, and good otherwise. Then the probability that a set $X$ chosen uniformly at random is bad is at most
	$$r \delta^s \leq r\delta^{\frac{a}{20k}} \leq \frac{1}{3},$$
	where the last condition follows from  (\ref{e:condition1}), when $\eps$ is sufficiently small.
	
	We now choose disjoint subsets $X_1,X_2,\ldots,X_{3k},Z$ of $A$ such that $|X_i|=s$,
	$|Z|=ks$ uniformly at random. By the computation above with probability greater than $1/2$ at least $k$ of the sets $X_1,X_2,\ldots,X_{3k}$ are good. By Lemma~\ref{l:easyrandom} with probability at least $1/2$ we have $d(G[Z]) \geq 0.15\cdot(a/20 -1) \geq a/200.$  
	
It follows that for some choice as above, $X_1,\ldots,X_k$ are good and $d(G[Z]) \geq a/200.$ By (\ref{e:condition1}) and (\ref{e:Thomason}) if $\eps$ is sufficiently small then there exists a model $\mu$ of $K_k$ in $G[Z]$. Assume for convenience that $V(K_k)=\{1,2,\ldots,k\}$, and  extend $\mu$ to a blueprint $\mu'$ of $K_k$ in $G[A]$ by adding $X_i$ to $\mu(i)$. Then $|\mu'(K_k)| \leq 2ca \leq 0.2a$ and the defect of $\mu'$ is at most $ca$. By Lemma~\ref{lem:heftymodel2} the blueprint $\mu'$ extends to a model $\mu''$ of $K_k$ in $G[A]$, and by the choice of $X_1,\ldots,X_k$ every vertex in $B$ has a neighbor in $\mu(i)$ for every $i$. Therefore adding each vertex of $B$ as a new bag to $\mu''$ produces a model of $K_r$ in $G$,  as desired.
\end{proof}

The next lemma differs from Lemma~\ref{l:semicomplete1} by the restriction on parameters and the construction of the model of $K_r$.

\begin{lem}\label{l:semicomplete2} There exists $\eps >0$ satisfying the following. Let $a,r\geq k \geq 2$ be positive integers and $\delta>0$ be real so that
\begin{equation}\label{e:condition4}
	\max\{r,\sqrt{rk\log{r}}\} < \eps a,
\end{equation}
	then every $(a,r-k,0.8)$-semicomplete graph has a $K_r$ minor.	
\end{lem}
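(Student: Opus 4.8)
The plan is to prove Lemma~\ref{l:semicomplete2} along the same lines as Lemma~\ref{l:semicomplete1}, but exploiting the much larger value $\delta = 0.8$ (rather than a tiny $\delta$) to get away with a weaker condition on $a$ that no longer carries a $\sqrt{\log k}$-type penalty from building a $K_k$-minor inside a random subset. Let $(A,B)$ be an $(a,r-k,0.8)$-semicomplete partition of a graph $G$, so $|A|=a$, $|B|=r-k$, $G[A]$ is hefty, $G[B]$ is complete, and every $v\in B$ has at least $0.2a$ neighbours in $A$. The goal is to produce a model of $K_r$ in $G$ consisting of $r-k$ singleton-type bags corresponding to $B$ together with $k$ further bags inside $A$ that are pairwise adjacent and each meet the neighbourhood of every vertex of $B$.

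\medskip

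\noindent First I would choose a small integer block size $s = \Theta(a/r)$ (so that $sk \ll a$; this is where the hypothesis $r < \eps a$ enters, ensuring $s$ is a positive integer at least some absolute constant times $a/r$). For a uniformly random $X\subseteq A$ with $|X|=s$, a fixed $v\in B$ misses $X$ entirely with probability at most $(0.8)^s$, and since $s = \Theta(a/r)$ and $r < \eps a$, taking $\eps$ small makes $r(0.8)^s \le 1/3$, so a random $X$ is ``good'' (adjacent to all of $B$) with probability at least $2/3$. As in Lemma~\ref{l:semicomplete1}, I would then draw disjoint random sets $X_1,\dots,X_{3k}$ of size $s$ together with an auxiliary set $Z$ of size $\Theta(ks)$; with probability $>1/2$ at least $k$ of the $X_i$ are good, and simultaneously, by Lemma~\ref{l:easyrandom}, $d(G[Z])\ge |Z|/c'$ for an absolute constant $c'$. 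Fixing such an outcome and relabelling, assume $X_1,\dots,X_k$ are good.

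\medskip

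\noindent The crucial difference from Lemma~\ref{l:semicomplete1} is how the $k$ bags are made pairwise adjacent. Here $|Z| = \Theta(ks) = \Theta(ka/r)$, so $d(G[Z]) = \Theta(ka/r)$, and by~(\ref{e:Thomason}) this contains a $K_k$-minor provided $d(G[Z]) \gg k\sqrt{\log k}$, i.e. provided $a/r \gg \sqrt{\log k}$ — but that is exactly the kind of condition we are trying to avoid. So instead I would \emph{not} build the $K_k$-minor inside $Z$ from scratch; rather, take the blueprint $\mu$ on $V(K_k)=\{1,\dots,k\}$ with $\mu(i)=X_i$, which has $|\mu(K_k)| \le ks = \Theta(ka/r) \le 0.2a$ for $\eps$ small, and whose defect is at most $\binom{k}{2}$. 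To apply Lemma~\ref{lem:heftymodel2} we need $\binom k2 \le 0.3|V(G[A])| - |\mu(K_k)|$, i.e. essentially $k^2 \lesssim a$; combined with the good-set argument and the requirement that building the model not collide with $B$, this is precisely where the hypothesis $\sqrt{rk\log r} < \eps a$ is needed (it implies $k < \eps^2 a/r \le \eps^2 a$, hence $k^2 \ll a$, and also controls the union bound over $B$). Lemma~\ref{lem:heftymodel2} then extends $\mu$ to a genuine model $\mu''$ of $K_k$ in $G[A]$, still containing each $X_i$ in bag $i$. Finally, since each $X_i$ was good, every $v\in B$ has a neighbour in each $\mu''(i)$; adding each vertex of $B$ as its own bag (the $B$-bags are mutually adjacent because $G[B]$ is complete) yields a model of $K_r$ in $G$.

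\medskip

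\noindent The main obstacle is bookkeeping the constants so that all the inequalities hold simultaneously for a single absolute $\eps$: the size $s$ must be an integer (handled by rounding, using $r<\eps a$), the random-set union bounds ($r(0.8)^s \le 1/3$, at most $1/2$ failure probability total) must close, and the defect bound feeding into Lemma~\ref{lem:heftymodel2} must hold. There is a mild subtlety that in~(\ref{e:condition4}) the term $\max\{r,\sqrt{rk\log r}\}$ is what governs things even when $k$ is small; for $k$ a bounded constant the $\sqrt{rk\log r}$ term is weaker than one might want, but since $k\ge 2$ one always has $\sqrt{rk\log r} \ge \sqrt{2r\log r} \ge r$ for $r$ large, so the binding constraint is genuinely $\sqrt{rk\log r}<\eps a$, which after squaring gives $rk\log r < \eps^2 a^2$, comfortably implying both $k^2 \ll a^2/r^2 \cdot (1/\log r) \cdot r = \ldots$ — more simply, $k < \eps^2 a^2/(r\log r) $ and $a > r/\eps$ together give $k \ll a$ and $k^2 \ll a$, as required. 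Getting a clean self-contained choice of the rounding and of $\eps$ is the only real work; everything else is a direct adaptation of the proof of Lemma~\ref{l:semicomplete1}.
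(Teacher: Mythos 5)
Your plan breaks at two places, and both failures are forced by the parameter regime that (\ref{e:condition4}) actually allows. First, the good-set union bound: you take bags of size $s=\Theta(a/r)$ and claim $r(0.8)^s\le 1/3$ "for $\eps$ small". But (\ref{e:condition4}) permits $a=\Theta(r/\eps)$ (e.g.\ $k=2$, $a=\lceil 2r/\eps\rceil$, where $\sqrt{rk\log r}<\eps a$ holds for large $r$ --- note also your claim $\sqrt{2r\log r}\ge r$ is false for large $r$, so $r<\eps a$ really is the binding constraint there). Then $s=\Theta(1/\eps)$ is a constant independent of $r$, and $r(0.8)^s\to\infty$: with $\delta=0.8$ fixed there is no analogue of the hypothesis $-k\log r/\log\delta<\eps a$ of Lemma~\ref{l:semicomplete1} to rescue you. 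Repairing this needs $s=\Omega(\log r)$, hence $ks=\Omega(k\log r)$ vertices devoted to hub bags, and (\ref{e:condition4}) does not bound $k\log r$ by $a$ (take $k=r$, $a=\Theta(r\sqrt{\log r}/\eps)$: the hypothesis holds but $k\log r\gg a$). Second, your assertion that the hypotheses give $k^2\ll a$ is false: the same example gives $\binom{k}{2}/a=\Theta(\eps r/\sqrt{\log r})\to\infty$, so Lemma~\ref{lem:heftymodel2} cannot absorb a defect of $\binom{k}{2}$ to make the $k$ hub bags pairwise adjacent. In short, the whole architecture in which every vertex of $B$ must be adjacent to all $k$ bags in $A$, and the $k$ bags are glued pairwise by defect repair, cannot work when $\delta=0.8$ and $k$ is large.

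The paper's proof uses a different architecture that sidesteps both issues. It takes a random $Z\subseteq A$ with $|Z|\le a/10$ and $d(G[Z])\ge a/200$, and applies Theorem~\ref{t:bipartite} --- this is exactly where $\sqrt{rk\log r}<\eps a$ is used --- to find a model of $K_{k,r-k}$, hence of $\bar K_{k,r-k}$, inside $G[Z]$ (so the $k$ hubs and their mutual adjacencies come from the bipartite extremal bound, not from defect repair). Then each $u_i\in B$ is attached to its \emph{own} bag $v_i$ on the independent side: since $u_i$ has at least $0.2a$ neighbours in $A$ and only $r-k\le a/10$ attachment vertices are needed, one can pick distinct $x_i\in A\setminus\mu(K_{k,r-k})$ with $x_iu_i\in E(G)$, use Lemma~\ref{lem:heftymodel2} to extend the model so that $x_i\in\mu'(v_i)$, and add $u_i$ to $\mu'(v_i)$; adjacency among the enlarged bags comes for free from $G[B]$ being complete. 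Thus each vertex of $B$ needs only a single edge into $A$, which is why $\delta=0.8$ suffices, whereas your scheme demands adjacency of each $B$-vertex to all $k$ hub bags, which the hypotheses cannot support.
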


\begin{proof}
	Let $(A,B)$ be an $(a,r-k,0.8)$-semicomplete partition of vertices of a graph $G$. As in the proof Lemma~\ref{l:semicomplete1} we can find $Z \subseteq A$ such that $d(G[Z]) \geq a/200$ and $|Z| \leq a/10$. (In fact, the constants can be significantly improved, if needed.) By Theorem~\ref{t:bipartite} and (\ref{e:condition4}) if $\eps$ is sufficiently small then $G[Z]$ contains a model of $K_{k,r}$ and thus a model of $\bar{K}_{k,r}$. Let  the vertices of independent set of $\bar{K}_{k,r-k}$ be $v_1,v_2,\ldots,v_{r-k} $ and let $B=\{u_1,u_2,\ldots,u_{r-k}\}$. As every vertex in $B$ has at least $a/5$ neighbors in $A$ and $|B| \leq a/10$, there exist distinct $x_1,\ldots,x_{r-k}$ in $A \setminus \mu(K_{k,r-k})$ such that $x_i$ is adjacent to $u_i$. By Lemma~\ref{lem:heftymodel2} the model  $\mu$ extends to a model of $\mu'$ of  $\bar{K}_{k,r-k}$ in $G[A]$ such that $x_i \in \mu'(v_i)$ for $1 \leq i \leq r-k$. Adding $u_i$ to $\mu'(v_i)$ for each $i$  produces the desired model of $K_r$ in $G$. 
\end{proof}

\section{Proof of Theorems~\ref{thm:main}  b) and Theorem~\ref{thm:upper}}\label{s:main}

We start this section by introducing a crucial lemma which will allow us to apply the results of the previous section. Recall that by Lemma~\ref{lem:ReedWood} every graph can be replaced with a hefty minor while losing only constant fraction of density. Given a blade $(G,S)$, we would like to apply it to the graph $G - S$ while controlling the loss of the density of the blade. We can do this if we first ensure that every vertex of $G-S$ has a large number of neighbors in $S$. This is accomplished by the next lemma.

First let us recall some standard definitions, which are used in the proof.
A \emph{separation} of a graph $G$ is a pair $(A,B)$ such that $A \cup B = V(G)$ and no edge of $G$ has one end in $A-B$ and the other in $B-A$. The \emph{order} of a separation $(A,B)$ is $|A \cap B|$. For $X, Y \subseteq V(G)$ an \emph{$(X,Y)$-linkage} is a set of vertex disjoint paths, 
each with one end in $X$ and the other end in $Y$. By Menger's theorem the maximum order of an $(X,Y)$-linkage in $G$ is equal to the minimum order of a separation $(A,B)$ of $G$ such that $X \subseteq A$, $Y \subseteq B$.

\begin{lem}\label{lem:densemodel}
For every graph $G$ there exists a  graph $H$ and  a model $\mu$ of a graph $H$ in $G$ such that for every $v \in V(H)$ there exists $u \in \mu(v)$ such that $\deg_G(u) \leq 96d(H)+24$.
\end{lem}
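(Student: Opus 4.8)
The plan is to prove the lemma by induction on $v(G)$. If $e(G)=0$ we may take $H=K_1$ with any singleton bag, since then $96d(H)+24=24\ge 0$; so assume $d(G)>0$, and by treating components separately assume also that $G$ is connected. Call a vertex $v$ \emph{heavy} if $\deg_G(v)>96d(G)+24$ and \emph{light} otherwise, and let $U$ be the set of heavy vertices, $L=V(G)\setminus U$. If $U=\emptyset$, then $H=G$ with singleton bags already satisfies the conclusion. Otherwise, since $\sum_v\deg_G(v)=2e(G)=2d(G)v(G)$, we get $|U|<v(G)/48$, so $U$ is small and $L$ is a large fraction of $V(G)$; moreover $U\ne V(G)$ (else $2e(G)>96d(G)v(G)=96e(G)$), so since $G$ is connected at least one edge joins $U$ to $L$. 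To run the induction it will be convenient to strengthen the statement slightly, also demanding $d(H)\ge d(G)$; this holds in all the base cases above. The overall idea is then to root every bag of $H$ at a light vertex and to absorb the heavy vertices into these bags without destroying too many edges of $G$.

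Fix a threshold $\eta$ and, for each component $C$ of $G[U]$, split into two cases according to the number of edges between $C$ and $L$ --- equivalently, by Menger's theorem, the order of the smallest separation $(A,B)$ with $C\setminus B\subseteq A\setminus B$ and $L\subseteq B$. In the first case, where $C$ sends at least $\eta|C|$ edges to $L$, a Menger/Hall-type argument yields a $(U_C,L)$-linkage covering a constant fraction of $C$: vertex-disjoint paths, each with a bounded number of vertices, running from distinct vertices of $C$ to distinct light vertices. Using these paths as the cores of distinct bags absorbs almost all of $C$ while destroying only $O(|C|)$ edges. Summing over all such components --- and using that $|U|$ is small and $d(G)$ is not too small --- this destroys only a small fraction of $e(G)$, so the resulting minor $H$ has $v(H)\ge(1-o(1))v(G)$ and $e(H)\ge(1-o(1))e(G)$, hence $d(H)\ge d(G)$, and every bag contains a light vertex, as needed.

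In the second case, where $C$ sends fewer than $\eta|C|$ edges to $L$, the fact that every vertex of $C$ is heavy forces $G[C]$ to be internally dense: $2e(G[C])\ge |C|\bigl(96d(G)+24\bigr)-\eta|C|$, so $d(G[C])\ge c\,d(G)>d(G)$ for a suitable constant $c$. Moreover there is a separation $(A,B)$ of small order with $C\subseteq A$, $L\subseteq B$, so that $G[A]$ is a graph on fewer than $v(G)$ vertices with $d(G[A])\ge d(G[C])>d(G)$. Apply the induction hypothesis to $G[A]$, obtaining a minor $H_A$ with $d(H_A)\ge d(G[A])$ in which every bag contains a vertex of $G[A]$-degree at most $96d(H_A)+24$. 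Since every vertex of $A\setminus B$ has the same degree in $G$ as in $G[A]$, and the separator $A\cap B$ is small, after discarding the few bags rooted inside $A\cap B$ we may take $H$ to be $H_A$ (or, if it is denser, the minor produced on the "light" side $G[B]$); the degree bound then transfers from $G[A]$ to $G$, and the inequality $d(H)\ge d(G)$ is maintained because the denser of the two sides already has density exceeding $d(G)$.

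The step I expect to be the crux is the density bookkeeping in the first case. Every edge contracted inside a bag is an edge lost from $H$; the chosen representatives are only certified light relative to $d(G)$, not relative to the possibly-smaller $d(H)$; and absorbing a heavy vertex into a bag bumps up by one the $G$-degree of any representative adjacent to it. Keeping all of these losses strictly inside the slack built into the constants $96$ and $24$ --- which is precisely why "heavy" is defined by the threshold $96d(G)+24$, and why the second case recurses on a dense cluster rather than collapsing it into a single bag --- is the delicate part of the argument; the role of Menger's theorem is exactly to supply, in each case, either the many disjoint short paths needed to absorb a component cheaply or the small separation needed to peel off a dense heavy cluster for recursion.
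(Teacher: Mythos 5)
Your induction hinges on the strengthened invariant $d(H)\ge d(G)$, and that invariant is false, so the proposed induction statement is not a theorem. Take $G$ to be a disjoint union of many stars $K_{1,D}$ with $D=130$, so $d(G)=D/(D+1)$. Every minor of a forest is a forest, so $d(H)<1$ and the threshold $96d(H)+24$ is below $D$; hence in any model in which every bag contains a vertex of $G$-degree at most $96d(H)+24$, every bag must contain a leaf. Bags live inside single stars, and per star the best one can do is a bag consisting of the centre and one leaf plus $D-1$ leaf singletons, giving density at most $(D-1)/D<D/(D+1)=d(G)$. (The graph $\bar{K}_{s,t}$ with $s$ a large constant and $t\gg s$ gives the same phenomenon: every admissible bag must contain a vertex of the independent part, and the maximum attainable density $\bigl(\binom{s}{2}+s(t-s)\bigr)/t$ is strictly below $d(G)$.) The lemma itself survives in these examples because the representatives have degree far below $96d(H)+24$, but your proof does not: since you define ``heavy'' by the threshold $96d(G)+24$, the final bound $\deg_G(u)\le 96d(H)+24$ requires exactly $d(H)\ge d(G)$, with no slack. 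The specific false step is the deduction ``$v(H)\ge(1-o(1))v(G)$ and $e(H)\ge(1-o(1))e(G)$, hence $d(H)\ge d(G)$'': fractional losses in numerator and denominator only yield $d(H)\ge(1-o(1))d(G)$ (and there is no asymptotic parameter here to make $o(1)$ meaningful). Two further unresolved points: contracting absorbing paths loses not just the $O(|C|)$ path edges but all edge multiplicities between bags, which you do not account for; and in your second case, discarding the bags meeting the separator $A\cap B$ can lower the density of $H_A$ by roughly $|A\cap B|$, which is not controlled against $d(G[A])$.

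For comparison, the paper's proof avoids this trap by never demanding $d(H)\ge d(G)$. It fixes $d$ to be the maximum density of a subgraph $G_1\subseteq G$, takes $R$ to be the vertices of degree at most $12d$, and uses Menger's theorem to link at least $\tfrac56|V(G_1)|$ vertices of $G_1$ to $R$ by disjoint paths (a smaller linkage would put a subgraph denser than $G_1$ on one side of the separation, contradicting maximality), then a Hall-type matching to absorb the remaining high-degree vertices of $G_1$ into these paths; the paths are the bags. One then proves only $d(H)\ge(d-2)/8$, and the self-referential bound closes because the degree threshold constant ($12$) is a factor of $8$ smaller than the constant ($96$) in the conclusion. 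If you want to rescue your scheme, you must likewise build multiplicative slack into the definition of ``light'' (threshold $c\,d$ with $c$ well below $96$) and aim only for $d(H)\ge d(G)/c'$ with compatible constants; but note that your case-2 recursion consumes that slack a second time when transferring the degree bound from $G[A]$ to $G$, so as written the constants cannot be made to work.
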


\begin{proof} 
Let $G_1\subseteq G$ be chosen such that $d(G_1)$ is maximum.  Let $d=d(G_1)$. 
Let $R$ be the set of all vertices of $G$ of degree at most $12d$, and let $\mathcal{P}$ be the $(V(G_1),R)$-linkage in $G$ of maximum order. Let $x=\abs{\mathcal{P}}$ and $n=\abs{V(G_1)}$. As noted above, by Menger's theorem there exists a separation $(A,B)$ of $G$ such that $V(G_1)\subseteq A,R\subseteq B$ and $\abs{A\cap B}=x$. Let $G_2=G[A]$ and $n'=\abs{A}$. Then, $\abs{A-B}=n'-x$ and every vertex in $A-B$ has degree at least $12d$. By the choice of $G_1$ we have
$$d\geq d(G_2) =\frac{e(G_2)}{v(G_2)}\geq\frac{6d(n'-x)}{n'}.$$
Thus $x\geq \frac{5}{6}n' \geq \frac{5}{6}n$. 

Let $Q$ be the set of starting vertices of paths $\mathcal{P}$ in $V(G_1)$, then $\abs{Q}=x$. Let $G_3=G[V(G_1)-Q]$, then 
$$e(G_3)\leq d v(G_3)=d(v(G_1)-\abs{Q})\leq dn/6.$$ 
Let $G_4=G_1 \setminus E(G_3)$, then $\abs{E(G_4)}\geq \frac{5}{6}dn$. 

Let $S$ be the set of vertices in $V(G_4)-Q$ with degree at least  $2d$. We claim that there exists a matching $M$ in $G_4$ so each vertex of $S$ is joined by an edge of $M$ to a vertex in $Q$. Suppose not. Then by Hall's theorem there exists a set $S' \subseteq Q$ such that $|S'| \leq |S|$ and all the edges of $G_4$ incident with vertices of $S$ have their second end in $S'$. It follows that $|E(G[S \cup S'])| \geq 2d|S|> d|S \cup S'|$ contradicting the choice of $d$ and proving our claim. For every edge of $e \in M$ with an end $q \in Q$  extend  the path $P$ in $\mathcal{P}$ which ends in $q$ to include $e$. 

We are now ready to construct the graph $H$ satisfying the lemma. Let $G_5=G_4[Q \cup S]$, let $V(H)= \mathcal{P} $  and $P',P'' \in H$ are adjacent in $H$ if some edge of $G_5$ joins a vertex of $P'$ to a vertex of $P''$. Then the identity map $\mu$ is a model of $H$ in $G$. 

Next we estimate $d(H)$. Note that $|V(P) \cap V(G_5)| \leq 2$ for every $P \in \mc{P}$, and every vertex of $G_5$ is a vertex of some path in $\mc{P}$. It follows that $e(H) \geq \frac{e(G_5)-v(H)}{4}$. 
Moreover, $$e(G_5) \geq e(G_4) - 2d(v(G_4)-|Q|-|S|) \geq \frac{5}{6}dn - 2d\frac{n}{6} \geq \frac{dv(H)}{2}.$$ 
Thus $d(H) \geq \frac{d-2}{8}.$ Finally, by the choice of $\mc{P}$,  for every $v \in V(H)$ there exists $u \in V(\mu(v))$ such that $\deg_G(u) \leq 12d \leq 96d(H)+24$.
\end{proof}

 We say that a blade $(G,S)$ is \emph{$(a,m)$-hefty} if \begin{itemize}
	\item $(G,S)$ is semiregular,
	\item $G\setminus S$ is hefty, 
	\item $a=|V(G) - S|$,
	\item there are at least $m$ edges joining vertices of $S$ to vertices of  $G\setminus S$.  \end{itemize}  We say that a blade $(G',S')$ is a \emph{minor} of a blade $(G,S)$ if $G'$ is obtained from $G$ by repeatedly deleting vertices and deleting and contracting edges with both ends in $V(G)\setminus S$. Lemmas~\ref{lem:densemodel} and~\ref{lem:ReedWood} imply the following.

\begin{lem}\label{lem:heftyblade} There exists a constant $D$ satisfying the following Let $\mc{B}=(G,S)$ be a regular blade such that $|V(G)-S|>1$. Then $\mc{B}$ has an $(a,d(\mc{B})a - Da^2)$-hefty minor for some positive integer $a \geq 2$. 
\end{lem}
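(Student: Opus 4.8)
The plan is to combine Lemma~\ref{lem:densemodel} (applied to $G\setminus S$) with Lemma~\ref{lem:ReedWood}, while tracking the edges between $S$ and $G\setminus S$. Set $G_0 = G\setminus S$ and $d = d(\mc{B})$. First I would apply Lemma~\ref{lem:densemodel} to $G_0$ to obtain a graph $H_0$ and a model $\mu_0$ of $H_0$ in $G_0$ such that every bag $\mu_0(v)$ contains a vertex $u_v$ with $\deg_{G_0}(u_v)\le 96\,d(H_0)+24$. Since $(G,S)$ is regular, every vertex $w\in V(G_0)$ has $\deg_G(w)\ge d$; hence the number of edges from $w$ to $S$ is at least $d - \deg_{G_0}(w)$. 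In particular each designated vertex $u_v$ sends at least $d - 96\,d(H_0) - 24$ edges into $S$. Contracting each bag $\mu_0(v)$ of $H_0$ down to a single vertex — which is an allowed blade-minor operation since it only contracts edges inside $V(G)\setminus S$ — turns $\mc{B}$ into a blade $\mc{B}_1 = (G_1, S)$ with underlying non-$S$ graph $H_0$, in which the image of each $u_v$ retains at least $d - 96\,d(H_0) - 24$ neighbours in $S$ (parallel edges collapse, but distinct endpoints in $S$ survive). Thus the number of $S$-to-$(G_1\setminus S)$ edges is at least $v(H_0)\bigl(d - 96\,d(H_0) - 24\bigr)$.

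Next I would apply Lemma~\ref{lem:ReedWood} to $H_0$, obtaining a hefty minor $H$ of $H_0$ with $v(H)\ge d(H_0)/2$. This hefty minor is realized by a model in $H_0$, i.e. by further deleting and contracting edges with both ends in $V(G_1)\setminus S$, so the corresponding blade $\mc{B}_2 = (G_2, S)$ with $G_2\setminus S \cong H$ is a blade-minor of $\mc{B}_1$, hence of $\mc{B}$, and $G_2\setminus S$ is hefty. Set $a = v(H)$. I would choose, for each vertex of $H$, one vertex of the corresponding bag in $H_0$ that is of the form $u_v$ (such a choice is possible because each $H_0$-bag of $H$ is non-empty and every $H_0$-vertex is some $u_v$ — or, if that is not literally guaranteed, simply keep all the $S$-edges of the $u_v$'s lying in that bag); after the contraction each such representative still has at least $d - 96\,d(H_0) - 24$ neighbours in $S$. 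Therefore the number of $S$-to-$(G_2\setminus S)$ edges $m$ satisfies
\[
m \;\ge\; a\bigl(d - 96\,d(H_0) - 24\bigr).
\]
Since $a \ge d(H_0)/2$, we have $d(H_0)\le 2a$, so $96\,d(H_0)+24 \le 192a + 24 \le 216a$, giving $m \ge a(d - 216a) = d(\mc{B})\,a - 216\,a^2$. Taking $D = 216$ (any absolute constant of this size works) yields the claim, after discarding the trivial cases: if $d(\mc{B})a - Da^2 \le 0$ the bound on $m$ is vacuous and any $(a, \cdot)$-hefty minor with $a\ge 2$ suffices, and $a \ge 2$ holds because a hefty graph that is not $K_2$ has at least, say, two vertices while $K_2$ itself has two — one only needs $|V(G)-S|>1$ in the original blade to guarantee $d(H_0)\neq 0$ so that Lemma~\ref{lem:densemodel} and Lemma~\ref{lem:ReedWood} produce a non-null output.

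Finally I must check that $\mc{B}_2$ can be taken semiregular, which is part of the definition of $(a,m)$-hefty. Completeness of $G_2[S]$ and the condition that every vertex of $S$ has a neighbour outside $S$ are inherited from $\mc{B}$ being regular (hence semiregular) as long as no contraction/deletion isolates a vertex of $S$ from $V(G_2)\setminus S$; if some vertex of $S$ loses all its outside-neighbours I would simply delete it from $S$ — this only helps the density count and the hefty-edge count. Connectedness of $G_2\setminus S$ follows since $H$ is hefty, hence connected (a hefty graph on $\ge 3$ vertices has minimum degree $>1$ and is connected; $K_2$ is connected). The main obstacle in writing this out carefully is the bookkeeping in the contraction step: one must be sure that when the bags of $\mu_0$ are contracted, the lower bound ``$\ge d - \deg_{G_0}(u_v)$'' on the number of surviving $S$-neighbours of the representative vertex is not eroded by the subsequent Lemma~\ref{lem:ReedWood} contractions — which is handled by always retaining (not contracting away) the chosen representative $u_v$ inside its bag, so its edges to $S$ persist verbatim in $G_2$.
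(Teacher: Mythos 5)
Your proposal is correct and follows essentially the same route as the paper's proof: apply Lemma~\ref{lem:densemodel} to $G\setminus S$, use regularity of the blade to convert the small $G\setminus S$-degrees of the designated vertices into many edges to $S$, contract bags, apply Lemma~\ref{lem:ReedWood}, bound $96\,d(H_0)+24$ by a constant times $a$ via $a\ge d(H_0)/2$, and finally delete the vertices of $S$ with no neighbour outside $S$ to restore semiregularity. The only differences are cosmetic (your constant $216$ versus the paper's $204$, and your slightly more explicit bookkeeping of representatives and degenerate cases).
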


\begin{proof}  We show that $D=204$ satisfies the lemma.
By Lemma~\ref{lem:densemodel} there exists a  graph $H$ and  a model $\mu$ of $H$ in $G \setminus S$ such that for every $v \in V(H)$ there exists $u \in \mu(v)$ such that $\deg_{G \setminus S}(u) \leq 96d(H)+24$.	As $\mc{B}$ is regular, it follows that each such vertex $u$ has at least $d(\mc{B}) - 96d(H)-24$ neighbors in $S$. Contracting the bags of $\mu$ to single vertices we obtain a minor $(G',S)$ of $\mc{B}$ such that $G' \setminus S$ is isomorphic to $H$ and every vertex in $V(G') \setminus S $  has at least $d(\mc{B}) - 96d(H)-24$ neighbors in $S$. Applying Lemma~\ref{lem:ReedWood} to $G' \setminus S$ we obtain a minor $(G'',S)$ of $\mc{B}$ such that $G'' \setminus S $ is hefty, $a=|V(G'')-S| \geq d(H)/2$ and every vertex $V(G'') \setminus S $  has at least $d(\mc{B}) - 96d(H)-24 \geq d(\mc{B}) - 204a$ neighbors in $S$.  Let $Z$ be the set of vertices in $S$ with no neighbors in $V(G'')-S$, then $(G'' - Z, S-Z)$ is $(a,d(\mc{B})a - 204a^2)$ hefty, as desired.
\end{proof}

Lemma~\ref{lem:heftyblade} allows us  to restrict our attention to hefty blades during the investigation of $c_\infty(lK_r)$ at the expense of an error term linear to the size of $V(G) \setminus S$ in such a blade $(G,S)$. Meanwhile, Lemmas~\ref{l:semicomplete1} and~\ref{l:semicomplete2} seem tailored for finding disjoint complete minors in hefty blades.
Our next lemma makes the connection explicit.

\begin{lem}\label{lem:heftyblade1}
Let $a,r \geq k \geq 2$ be positive integers. If every $(a,r-k,\frac{k-1}{r-1})$-semicomplete graph has a $K_r$ minor.
Then for every $l \geq 1$,
every $(a,(l(r-k)+k-1)a)$-hefty blade has an $lK_{r}$ minor. 
\end{lem}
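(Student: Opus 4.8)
\noindent\emph{Proof proposal.} The plan is to use Lemma~\ref{l:subblade} to reduce the problem to partitioning (a subset of) $S$ into $l$ disjoint pieces, each of which induces a blade with a $K_r$ minor, and to realise each piece in one of two ways: a \emph{cheap} piece of size $r-k$ consisting of vertices with many neighbours in $V(G)\sm S$, which yields a $K_r$ minor by the hypothesis on semicomplete graphs, or an \emph{expensive} piece of size $r-1$ consisting of arbitrary vertices, which yields a $K_r$ minor by Lemma~\ref{l:bladetau}. The heart of the argument is then a counting estimate showing that the $(a,(l(r-k)+k-1)a)$-heftiness of the blade supplies enough vertices overall, and enough ``heavy'' vertices in particular, to assemble $l$ such pieces.

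First I would dispose of the case $|S| \geq l(r-1)$: since the blade $(G,S)$ is semiregular and $\tau(lK_r) = l(r-1)$, Lemma~\ref{l:bladetau} gives an $lK_r$ minor of $G$ directly. So assume $|S| \leq l(r-1)-1$. Call $v \in S$ \emph{heavy} if it has at least $\frac{r-k}{r-1}a = \left(1 - \frac{k-1}{r-1}\right)a$ neighbours in $V(G)\sm S$, and \emph{light} otherwise; let $h$ be the number of heavy vertices. The number of edges between $S$ and $V(G)\sm S$ is at least $(l(r-k)+k-1)a$ by heftiness, is at most $|S|\cdot a$ since each vertex of $S$ has at most $a = |V(G)\sm S|$ such neighbours, and is at most $ha + (|S|-h)\frac{r-k}{r-1}a$ since a heavy vertex sends at most $a$ and a light one fewer than $\frac{r-k}{r-1}a$ of them. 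Combining these yields
$$|S| \;\geq\; l(r-k)+k-1 \qquad\text{and}\qquad h \;\geq\; \frac{(r-k)\bigl(l(r-1)-|S|\bigr)}{k-1} + (r-1).$$

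Now set $j = \left\lceil \frac{l(r-1)-|S|}{k-1}\right\rceil$. The two displayed inequalities give $1 \leq j \leq l$, that $|S| \geq j(r-k)+(l-j)(r-1)$, and that $h \geq j(r-k)$. Hence one may choose disjoint $S_1,\dots,S_l\subseteq S$ with $S_1,\dots,S_j$ each consisting of $r-k$ heavy vertices and $S_{j+1},\dots,S_l$ each consisting of $r-1$ further vertices. For $i\leq j$, the graph $G[(V(G)\sm S)\cup S_i]$, partitioned into $V(G)\sm S$ and $S_i$, is $(a,\,r-k,\,\frac{k-1}{r-1})$-semicomplete: its first part $G\sm S$ is hefty because $(G,S)$ is hefty, its second part is a clique because $S_i\subseteq S$ with $G[S]$ complete, and ``heavy'' is precisely the required degree condition. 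By hypothesis this graph has a $K_r$ minor, hence so does $\mc{B}[S_i] = (G\sm(S\sm S_i),\,S_i)$, since it equals $\Fan(\mc{B}[S_i],1)$. For $i>j$, the blade $\mc{B}[S_i]$ is semiregular with $|S_i| = r-1 = \tau(K_r)$, so it has a $K_r$ minor by Lemma~\ref{l:bladetau}. Applying Lemma~\ref{l:subblade} with $H_1 = \dots = H_l = K_r$ yields the desired $lK_r$ minor of $(G,S)$.

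The only delicate point is the inequality $h \geq j(r-k)$: writing $y = \frac{l(r-1)-|S|}{k-1}$, one has $h \geq (r-k)y + (r-1)$ while $j(r-k) = \lceil y\rceil (r-k) < (y+1)(r-k) = (r-k)y + (r-k) \leq (r-k)y + (r-1)$, so the lower bound on $h$ is exactly tight enough; this is the one place the hypothesis $k\geq 2$ (equivalently $r-k\leq r-1$) is essential. Everything else — the double-counting and the verification that $j$ lies in $[1,l]$ and that $S$ is large enough to carve out the pieces — is routine arithmetic. Conceptually, the lemma just says that once $|S| < \tau(lK_r)$, the heftiness forces $S$ to contain enough heavy vertices that each $K_r$ one cannot afford to build ``expensively'' (using $r-1$ slots) can instead be built ``cheaply'' (using $r-k$ heavy slots) through the semicomplete hypothesis.
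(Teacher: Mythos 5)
Your proof is correct and follows essentially the same route as the paper's: the same heavy-vertex threshold $(1-\tfrac{k-1}{r-1})a$, the same split into pieces of $r-k$ heavy vertices (handled by the semicomplete hypothesis) and pieces of $r-1$ arbitrary vertices (handled via Lemma~\ref{l:bladetau}, i.e.\ contracting the connected part), the same edge double count between $S$ and $V(G)\setminus S$, and Lemma~\ref{l:subblade} to assemble the $l$ disjoint $K_r$ minors; you merely organize the count constructively via the explicit value $j$ where the paper takes the maximum number of cheap pieces and argues by contradiction. One small nit: $k\geq 2$ is not equivalent to $r-k\leq r-1$ (the latter needs only $k\geq 1$); what $k\geq 2$ actually buys you is $k-1\geq 1$, so that dividing by $k-1$ in your definition of $j$ and in the bound on $h$ is legitimate --- a harmless slip that does not affect the argument.
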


\begin{proof} 
	
	Let $\mc{B}=(G,S)$ be an $(a,(l(r-k)+k-1)a)$-hefty blade. Let $H=G\setminus S$.  Let $\delta=(k-1)/(r-1)$, and let $S'$ be the set of all vertices in $S$ with at least $a(1 -\delta)$ neighbors in $V(H)$. Then for every subset $T$ of $S'$ with $|T|=r-k$ the underlying graph of the blade $\mc{B}[T]$ is $(a,r-k,\delta)$-semicomplete, and so  $\mc{B}[T]$  contains a $K_r$ minor by the assumption of the lemma.	

Let $x= \lfloor|S'|/(r-k) \rfloor$. Then there exists disjoint $T_1,T_2,\ldots T_x \subseteq S'$ such that $|T_i|=r-k$ for $1 \leq i \leq x$.  Let $S''= S  \setminus \cup_{i=1}^x T_i$. Suppose that $|S''| \geq (r-1)(l-x)$. Then there exist disjoint $T_{x+1},\ldots,T_{l} \subseteq S''$, such that $|T_i|=r-1$ for $x+1 \leq i \leq l$. Contracting $H$ to a single vertex gives a model of $K_r$ in $T_i$ for $x+1 \leq i \leq l$. Thus by Lemma~\ref{l:subblade} the blade $\mc{B}$ has an $lK_r$ minor. 

Therefore we may assume for a contradiction that $|S''| \leq (r-1)(l-x)$. We have $|S' \cap S''| \leq r-k-1$ and so the total number of edges of $G$ with one end in $S''$ and another in $V(H)$ is at most
$$a(r-k)+ ((r-1)(l-x)-r-k)a(1-\delta)$$
Adding the edges with one end in $S \setminus S''$, we obtain the following upper bound on the number of edges from $S$ to $V(H)$
\begin{align*}
x(r-k)a+a(r-k)+ ((r-1)(l-x)-r-k)a(1-\delta) \\ 
=a (x(r-k -(r-1)(1-\delta))+ l(r-1)(1-\delta)+\delta(r-k)) \\
= a \left(l(r-k)+\frac{(k-1)(r-k)}{r-1} \right) \\
< a (l(r-k)+k-1), 
\end{align*}
contradicting the assumption that $\mc{B}$ is $(a,a(l(r-k)+k-1))$-hefty. 
\end{proof}

We now have all the ingredients in place for the proofs of our main theorems.

\begin{proof}[Proof of Theorem~\ref{thm:main}  b)]
Let $D$ be as in Lemma~\ref{lem:heftyblade}, let $\eps$ be as in Lemma~\ref{l:semicomplete1}, let $\lambda^*$ be such that every graph $H$ with $d(H) \geq \lambda^* r \sqrt {\log r}$ contains a $K_r$ minor. Assuming $C \gg \lambda^*,D,1/\eps$, we will show that $\c(lK_r) \leq l(r-1)-1$ for all $l \geq C \log r/ \log\log r$.  
	
By Corollary~\ref{c:fanden} it suffices to show that if $\mc{B}'=(G',S')$ is a regula with $d(\mc{B}') > l(r-1)-1$ then $lK_r$ is a minor of $\mc{B}'$. If $|S'| \geq l(r-1)$,  then $\mc{B}'$ has an $lK_r$ minor by Lemma~\ref{l:bladetau}, and so we assume $|S| < l(r-1)$. Therefore $|V(G')-S| \geq 2$, and by Lemma~\ref{lem:heftyblade}, $\mc{B}'$ contains an $(a, (l(r-1)-1- Da)a)$-hefty minor $\mc{B}=(G,S)$ for some integer $a \geq 2$. We will show that $\mc{B}=(G,S)$ contains an  $lK_r$ minor. 

If $a \geq 2\lambda^* r\sqrt{\log r}$ then  $G - S$ has a $K_r$ minor, and so $\mc{B}'$ contains an $nK_r$ minor for any integer $n>0$. Thus we assume \begin{equation}\label{e:asmall}
\eps a \leq 2\lambda^*r\sqrt{\log r}
\end{equation}
Suppose next that  $l \geq 2Da^3$. Then $G$ contains at least  $(l(r -2) + Da^2)a$ edges joining vertices of $S$ to vertices in $V(G)-S$, and so $|S| \geq l(r-2)+Da^2$.  
Moreover, $|S| \leq l(r-1)$, and therefore  at most $Da^2$ vertices in $S$ have a non-neighbor in $V(G)-S$. Thus there exist a set $S' \subseteq S$ such that $|S'| \geq l(r-2)$ and every $v \in S'$ is adjacent to every vertex of $V(G)-S$. Let $S_1,S_2,\ldots,S_l$ be disjoint subsets of $S'$ such that $|S_i|=r-2$ for $1 \leq i \leq l$. Then $\mc{B}[S_i]$ contains $K_r$ as a subgraph, and so $\mc{B}$ has an $lK_r$ minor by Lemma~\ref{l:subblade}. Thus we may assume that $2Da^3 \geq l \geq C$, implying $a\gg 1$, which in turn implies $r\gg 1$ by (\ref{e:asmall}).

Suppose that there exist an integer $2 \leq k\leq r$ such that 
	\begin{align}
	k \cdot\brm \max \left\{\sqrt{\log k},2\frac{\log r}{\log{r}-\log{k}} \right\} < \eps a \label{e:condition11}\\
l(r-1)- 1-Da \geq l(r-k)+k-1, \label{e:condition13}
\end{align}
Then by Lemma~\ref{l:semicomplete1} every $(a,r-k,(k-1)/(r-1))$-semicomplete graph has a $K_r$ minor, and thus by Lemma~\ref{lem:heftyblade1} every  $(a,(l(r-k)+k-1)a)$-hefty blade has an $lK_r$ minor. Meanwhile, the last condition implies that
$\mc{B}$ is $(a,(l(r-k)+k-1)a)$-hefty. Thus it remains to find  $k$ satisfying the above.

Let $ k= \lceil 2Da/l+1 \rceil.$ Then $(k-1)(l-1) \geq Da+1$ and so (\ref{e:condition13}) holds. If $k \leq 3$ then (\ref{e:condition11}) also holds $\eps a, r \gg 1$. Otherwise, $k \leq 4Da/l$. By (\ref{e:asmall}), we have  \begin{align*}\log l &\geq \log C + \log \log r - \log\log\log r \\ &\geq \frac{1}{3} \log \log r -\log r +\log a +    \log 4D,\end{align*} and so $\log k \leq \log r - \frac{1}{3}\log\log r$. Thus the left side of (\ref{e:condition11}) is at most
$$a 
\cdot \frac{4D \log\log r}{C\log r} \cdot \frac{2\log r}{\frac{1}{3}\log\log r} = \frac{24D}{C}a < \eps a$$
as desired.
\end{proof}

\begin{proof}[Proof of Theorem~\ref{thm:upper}] 
	The argument is very similar to the proof of Theorem~\ref{thm:main} b) above, except that we use Lemma~\ref{l:semicomplete2} in place of Lemma~\ref{l:semicomplete1}.
	
	Let $D$ be as in Lemma~\ref{lem:heftyblade}, let $\eps$ be as in Lemma~\ref{l:semicomplete2}, and  let  $\lambda^*$ be such that every graph $H$ with $d(H) \geq \lambda^* r \sqrt{ \log r}$ contains a $K_r$ minor, and let $C$ be as in Theorem~\ref{thm:main} b). We show that the theorem holds as long as $C_u \gg C, \lambda^*,D,1/\eps$. 
	
	Let $\Delta = C_u r \log r /l$. 
As in the proof of Theorem~\ref{thm:main} by Lemma~\ref{lem:heftyblade} it suffices to show that  that if $\mc{B}=(G,S)$ is an $(a, (l(r-1) - 1 + \Delta - Da)a)$-hefty blade for some integer $a \geq 2$ then $\mc{B}$ contains an $lK_r$ minor. By Theorem~\ref{thm:main} b) we may assume that $l \leq C\log r \log \log r$.

As in the previous proof we may assume that $|S| < l(r-1)$ and that (\ref{e:asmall}) holds. The first of these conditions implies $Da \geq \Delta$, that is  \begin{equation}\label{e:alarge}
 a \geq  \frac{C_u r \log r}{Dl}.
 \end{equation} Substituting the upper bound on $l$, we have $a > r/\eps$. As a consequence of (\ref{e:asmall}) amd (\ref{e:alarge}) we have $r \gg 1$ and  
 \begin{equation}\label{e:llarge}
  l >   6D\lambda^*\sqrt{\log r}.
 \end{equation} (The constants in the above inequalities may seem arbitrary, but are chosen for later use.) 

As in the proof of Theorem~\ref{thm:main} successively applying Lemma~\ref{l:semicomplete2} and Lemma~\ref{lem:heftyblade1} we see that it suffices to find a positive integer $k\geq 2$ satisfying
	\begin{align} \brm{max} \{r,\sqrt{rk\log r}\} < \eps a, \label{e:condition21}\\
	0.2 \leq \frac{k-1}{r-1}, \label{e:condition22}\\
   l(r-1) - Da \geq l(r-k)+k-1. \label{e:condition23}
\end{align}

Choose $k = cDa/l$ for some $2 < c <3$. Then $lk \geq 2Da$ and (\ref{e:condition23}) holds. The condition (\ref{e:condition22}) holds by (\ref{e:asmall}) and (\ref{e:llarge}). It remains to show that $\sqrt{rk\log r} < \eps a$, i.e. $$\frac{cDa}{l}r\log r < \eps^2 a^2,$$
which follows directly from (\ref{e:alarge}).
\end{proof}

\section{Concluding remarks}\label{s:conclude}

In this paper we explored applications of the structural lemma of Eppstein~\cite{Eppstein10} to bounds on the asymptotic extremal function $\c(H)$ for disconnected graphs $H$. In particular, the large portion of the paper is dedicated to proving bounds on $\c(lK_r)$. In this direction the following interesting questions remain open

\begin{que}\label{q:1} How large is $\c(2K_r) - \c(K_r)$?
\end{que}	

Clearly, $\c(2K_r) - \c(K_r) \geq 1$, and  we have $\c(2K_r) - \c(K_r) \leq  r-1$ by Theorem~\ref{thm:infty+}, but we can not improve on either of the bounds. Giving a precise answer to Question~\ref{q:1} might be out of reach of the current techniques, as it seems likely to involve obtaining estimates on $c(K_r)$ with additive error sublinear in $r$. In contrast,  we believe that it is possible that a refinement of the tools presented in this paper is sufficient to answer the following two questions.

\begin{que}\label{q:2} Give an estimate on $\c(lK_r)$ which is asymptotically tight for all $l,r$ such that $l+r \to \infty$.
\end{que}	

As noted in the introduction, we have $$\frac{1}{2}-o(1) \leq \frac{\c(lK_r)}{\lambda r \sqrt{\log r} + l(r-1) } \leq 1 + o(1),$$
but can one improve on the estimate in denominator to remove the gap between the bounds?

\begin{que}\label{q:3}  Give a  tight estimate of $\c(lK_r) - l(r-1)$ in the range $l=\omega(\sqrt{\log r})$ and $l=o(\log r/ \log\log r)$. 
\end{que}	

Theorems~\ref{thm:lower} and~\ref{thm:upper} provide bounds on the above difference which differ by a constant factor. We believe that the lower bound is tight.

There are also many natural questions which could be asked about the behaviour of  $\c(lH)$ for non-complete graph $H$. For example, define the \emph{excess} of $H$ by $$\brm{exc}(H)=\lim_{l \to \infty} (\c(lH)-l\tau(H)+1).$$
By (\ref{e:tau}) and Theorem~\ref{thm:infty+}, $\brm{exc}(H)$ is well-defined and is non-negative for every graph $H$. By Theorem~\ref{t:Thomason} we have $\brm{exc}(K_r)=0$ for every $r$. By Theorem~\ref{thm:cycles} we have $\brm{exc}(C_l)=0$ for every $l \neq 4$, while $\brm{exc}(C_4)=1/2$.

\begin{que}\label{q:4}  Describe $\brm{exc}(H)$ in terms of other (natural) parameters of the graph $H$. 
\end{que}	

Finally, note once again that $\c((l+1)H)-\c(lH) \leq \tau(H)$ for all $H$ and all $l \geq 1$ by Theorem~\ref{thm:infty+}. It is possible to show that for fixed $H$ and large enough $l$ the above inequality holds with equality. Hence one might consider the following question.
  
  \begin{que}\label{q:5}  For a fixed graph $H$ is the sequence  $\c((l+1)H)-\c(lH)$ unimodular? Is it non-decreasing?
  \end{que}	
  
  Note that the answer to Question~\ref{q:5} might shed light on Questions~\ref{q:1} and~\ref{q:2}.

\bibliographystyle{alpha}
\bibliography{snorin}
\end{document}